\newcounter{propcounter}
\newtheorem{theorem}{Theorem}[section]
\newtheorem{prop}[theorem]{Proposition}
\newtheorem{lemma}[theorem]{Lemma}
\newtheorem{cor}[theorem]{Corollary}
\theoremstyle{definition}
\newtheorem{problem}{Problem}
\newtheorem{defin}[theorem]{Definition}
\newtheorem{claim}[theorem]{Claim}
\newtheorem{step}{Step}
\newenvironment{poc}{\begin{proof}[Proof of claim]}{\end{proof}}
\newcommand{\xc}{\overline{X}}
\newcommand{\eps}{\varepsilon}
\newcommand{\cF}{\mathcal{F}}
\newcommand{\cQ}{\mathcal{Q}}
\newcommand{\sfG}{\mathsf{G}}
\newcommand{\sfT}{\mathsf{T}}
\newcommand*{\bfrac}[2]{\genfrac{(}{)}{}{}{#1}{#2}}
\newcommand{\Int}{\mathsf{Int}}
\newcommand{\Ext}{\mathsf{Ext}}
\newcommand{\Cen}{\mathsf{Ctr}}
\newcommand*{\abs}[1]{\lvert#1\rvert}
\newcommand*{\bigabs}[1]{\bigl|#1\bigr|}
\newcommand*{\Bigabs}[1]{\Bigl|#1\Bigr|}
\title{Extremal density for sparse minors and subdivisions}
\author{John Haslegrave\thanks{J.H. and H.L. were supported by the UK Research and Innovation Future Leaders Fellowship MR/S016325/1.} \and 
		Jaehoon Kim\thanks{J.K. was supported by the POSCO Science Fellowship of POSCO TJ Park Foundation and by the KAIX Challenge program of KAIST Advanced Institute for Science-X} \and 
		Hong Liu\footnotemark[1]}
\definecolor{darkblue}{rgb}{0,0,0.5}
\begin{document}
\maketitle
\begin{abstract}
	We prove an asymptotically tight bound on the extremal density guaranteeing subdivisions of bounded-degree bipartite graphs with a mild separability condition. As corollaries, we answer several questions of Reed and Wood on embedding sparse minors. Among others,
	\begin{itemize}
		\item $(1+o(1))t^2$ average degree is sufficient to force the $t\times t$ grid as a topological minor;
		
		\item $(3/2+o(1))t$ average degree forces \emph{every} $t$-vertex planar graph as a minor, and the constant $3/2$ is optimal, furthermore, surprisingly, the value is the same for $t$-vertex graphs embeddable on any fixed surface;
		
		\item a universal bound of $(2+o(1))t$ on average degree forcing \emph{every} $t$-vertex graph in \emph{any} nontrivial minor-closed family as a minor, and the constant 2 is best possible by considering graphs with given treewidth.
	\end{itemize}
\end{abstract}

\section{Introduction}

Classical extremal graph theory studies sufficient conditions forcing the appearance of substructures. A seminal result of this type is the Erd\H{o}s--Stone--Simonovits theorem~\cite{ESto,ESim}, determining the asymptotics of the average degree needed for subgraph containment. It reads as 
\[d_{\supseteq}(H):=\lim_{n\rightarrow \infty}\inf\{c:~\abs{G}\ge n~\text{ and }~d(G)\geq c\abs G\Rightarrow G\supseteq H\}=1-\frac{1}{\chi(H)-1},\]
where $\chi(H)$ is the chromatic number of $H$. We are interested here in the analogous problem of average degree conditions forcing $H$ as a minor. A graph $H$ is a \emph{minor} of $G$, denoted by $G\succ H$, if it can be obtained from $G$ by vertex deletions, edge deletions and contractions.

The study of such problem has a long history. An initial motivation was Hadwiger's conjecture that every graph of chromatic number $t$ has $K_t$ as a minor, which is a far-reaching generalisation of the four-colour theorem. Since every graph of chromatic number $k$ contains a subgraph of average degree at least $k-1$, a natural angle of attack is to find bounds on the average degree which will guarantee a $K_t$-minor. The first upper bound for general $t$ was given by Mader \cite{Mader1, Mader2}, who subsequently improved this bound to $O(t\log t)$. In celebrated work of Kostochka \cite{Kos84} and, independently, Thomason \cite{Thom84}, it was improved to the best possible bound $O(t\sqrt{\log t})$, Thomason subsequently determining the asymptotic \cite{Thom01}. Denoting
\[d_{\succ}(H):=\inf\{c:~d(G)\geq c\Rightarrow G\succ H\},\]
he proved that $d_{\succ}(K_t)=(\alpha+o_t(1))t\sqrt{\log t}$, where $\alpha=0.6382\ldots$ is explicitly defined. This remained the best order of magnitude bound even for the chromatic number question until very recent breakthrough by Norin, Postle and Song~\cite{NPS19} and by Postle~\cite{Pos20}.

For more general minors, Myers and Thomason \cite{MT05} resolved the problem when $H$ is polynomially dense, i.e.\ having $\abs{H}^{1+\Omega(1)}$ edges, showing that $d_{\succ}(H)=(\alpha\gamma(H)+o(1))\abs{H}\sqrt{\log \abs{H}}$ for $\alpha$ as above and some explicitly defined $\gamma(H)$. However, for sparse graphs their results only give $d_{\succ}(H)=o(\abs{H}\sqrt{\log \abs{H}})$, similar to the way that the Erd\H{o}s--Stone--Simonovits theorem only gives $d_{\supseteq}(H)=0$ for bipartite $H$, and so it is natural to ask for stronger bounds in this regime.

Reed and Wood \cite{RW16} considered sparser graphs, and in particular showed that for sufficiently large average degree $d(H)$ we have $d_{\succ}(H)<3.895 \abs{H}\sqrt{\log d(H)}$. They also obtain bounds linear in $e(H)$, which are better in the very sparse case of bounded average degree. Reed and Wood asked several interesting questions about the asymptotics of $d_{\succ}(H)$ for sparse $H$. Among sparse graphs, grids play a central role in graph minor theory \cite{DemHaj, RS86,RST94}. Indeed, Reed and Wood raised the question of determining $d_{\succ}(\sfG_{t,t})$,  where $\sfG_{t,t}$ is the $t\times t$ grid. That is, what is the minimum $\beta>0$ such that every graph with average degree at least $\beta t^2$ contains $\sfG_{t,t}$ as a minor. Trivially $\beta\geq 1$ in order for the graph to have enough vertices, and their results give a bound of $\beta\leq 6.929$. 

This question provides the motivating example for our results. However, we shall focus on a special class of minors: subdivisions or topological minors. A \emph{subdivision} of $H$ is a graph obtained from subdividing edges of $H$ to pairwise internally disjoint paths. The name of topological minor comes from its key role in topological graph theory. A cornerstone result in this area is Kuratowski’s theorem from 1930 that a graph is planar if and only if it does not contain a subdivision of $K_5$ or $K_{3,3}$. 
Again it is natural to ask what average degree $d$ will force $K_t$ as a topological minor, and we define analogously 
\[d_{\sfT}(H):=\inf\{c:~d(G)\geq c\Rightarrow G~\text{ contains }~H~\text{ as a topological minor}\}.\]
Clearly, for any $H$, $d_{\succ}(H)\le d_{\sfT}(H)$. By considering complete bipartite graphs it is easy to see that $d_{\sfT}(K_t)=\Omega(t^2)$. Koml\'os and Szemer\'edi \cite{K-Sz96} and, independently, Bollob\'as and Thomason \cite{BT98} gave a matching upper bound of $d_{\sfT}(K_t)=O(t^2)$. Note that clique topological minors are harder to guarantee than clique minors, as evidenced by the significant gap between this result and that of Kostochka and of Thomason for the latter. Furthermore, the leading coefficient of the quadratic bound on $d_{\sfT}(K_t)$ is still unknown. Much less is known for bounds on average degree guaranteeing sparse graphs as topological minors.

\subsection{Main result} We study in this paper the problem of finding subdivisions of a natural class of sparse bipartite graphs. In particular, our main result offers the asymptotics of the average degree needed to force subdivisions of such graphs, showing that a necessary bound is already sufficient. It reads as follows.

\begin{theorem}\label{thm: main}
For given $\eps>0$ and $\Delta\in \mathbb{N}$, there exist $\alpha_0$ and $d_0$ satisfying the following for all $0<\alpha<\alpha_0$ and $d\geq d_0$.
If $H$ is an $\alpha$-separable bipartite graph with at most $(1-\eps)d$ vertices and $\Delta(H)\leq \Delta$, and $G$ is a graph with average degree at least $d$, then $G$ contains a subdivision of $H$.
\end{theorem}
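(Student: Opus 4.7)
My plan is to run a Komlós--Szemerédi sublinear-expander embedding of $H$ into $G$, with the order of embedding dictated by the separator decomposition of $H$.

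\emph{Step 1: pass to an expander.} Starting from $G$ with $d(G)\ge d$, I would apply the Komlós--Szemerédi theorem together with a standard iterative low-degree pruning to extract a subgraph $G'\subseteq G$ that is a sublinear expander of essentially the same average degree and minimum degree at least, say, $d/4$. The key consequence is a short-path lemma: in $G'$, any two disjoint vertex subsets of modest size are joined by a path of $\mathrm{polylog}(|G'|)$ length that avoids any prescribed forbidden set of polynomially bounded size.

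\emph{Step 2: use separability to order $V(H)$.} Iterating the $\alpha$-separator of $H$ yields a laminar decomposition $H=H_1\supset H_2\supset\cdots$ in which at each level only an $\alpha$-fraction of vertices is removed to split the current piece into two balanced parts. I would process $V(H)$ in an order consistent with this hierarchy so that at every stage the number of $H$-edges crossing the frontier between embedded and unembedded vertices is small. This organisation keeps the number of simultaneously active long paths in $G'$ under control at every scale.

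\emph{Step 3: route paths greedily, and handle the main obstacle.} After an injective choice of branch vertices for $V(H)$ in $G'$ (leaving at least $\eps d$ spare vertices for path interiors, thanks to $|H|\le(1-\eps)d$), I would embed the edges of $H$ one at a time by routing internally disjoint short paths via the short-path lemma, always treating previously used vertices as forbidden. The main technical obstacle is to show that the forbidden set stays polynomially bounded throughout the procedure, since otherwise the short-path lemma eventually fails. This is where both hypotheses become essential: $\Delta(H)\le\Delta$ caps the number of paths incident to any branch vertex, preventing local choking; and $\alpha$-separability, re-applied at every scale, limits the number of crossing edges per level so that at any stage only an $\alpha$-fraction of all "long" paths is simultaneously active. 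A level-by-level accounting, together with the fact that most edges of $H$ lie inside a small piece of the decomposition and can be routed locally, should show that the path interiors fit within the $\eps d$ slack, completing the subdivision.
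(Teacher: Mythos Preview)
Your proposal has two genuine gaps that make the argument break down.

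First, you do not handle the dense regime. After passing to an expander $G'$ on $n$ vertices, it may happen that $n$ is comparable to $d$ (say $n\le d/\eta$). You then have $e(H)=\Theta(d)$ edges to route, and even $\mathrm{polylog}(n)$-length paths would consume $\Theta(d\cdot\mathrm{polylog}(d))$ interior vertices in total, far more than the $\eps d$ slack you budget. The paper treats this regime by an entirely different method: it applies the regularity lemma to $G'$ and the blow-up lemma to embed $H$ as an actual subgraph (no subdivision), and this is the only place where bipartiteness and $\alpha$-separability (via bandwidth) are used---to build a balanced homomorphism of $H$ into a short odd cycle or sun in the reduced graph. This is also where the tight constant $(1-\eps)d$ is earned; a pure expander-routing argument cannot recover it.

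Second, in the sparse regimes your Step~3 cannot start. The Koml\'os--Szemer\'edi short-path lemma connects two sets each of size $\Omega(d)$ while avoiding a forbidden set of size $o(d/\log^2 n)$ times that. A single branch vertex (or its neighbourhood, which in a very sparse expander may have size $o(d)$) is far too small to invoke it. The paper spends most of its work precisely on this point: it builds webs (medium density) or nakjis (very sparse) around the anchor vertices so that each has a large exterior from which to route, and it carefully protects the fragile centres of these structures during later connections. Your proposal skips this machinery entirely. Relatedly, your use of the separator hierarchy to bound ``simultaneously active'' paths is a red herring: in a subdivision all paths must be pairwise internally disjoint, so the forbidden set eventually contains every path already built regardless of the order in which you process edges; the real bottleneck is the size of the sets being connected, not the size of the forbidden set.
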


Here a graph $H$ is $\alpha$\textit{-separable} if there exists a set $S$ of at most $\alpha \abs{H}$ vertices such that every component of $H-S$ has at most $\alpha \abs{H}$ vertices. Graphs in many well-known classes are $o(1)$-separable. For example, any large graphs in nontrivial minor-closed family of graphs is $o(1)$-separable \cite{AST90,MRS16}.

As an immediate corollary, our main result answers the above question of Reed and Wood in a strong sense by showing that any $\beta>1$ is sufficient to force the $k$-dimensional grid $\sfG_{t,\ldots,t}^k$ not only as a minor but as a topological minor, and so
\[d_{\sfT}(\sfG_{t,\ldots,t}^k)=d_{\succ}(\sfG_{t,\ldots,t}^k)=(1+o_t(1))t^k.\]

We remark that the optimal constant $1$ in Theorem~\ref{thm: main} is no longer sufficient if $H$ is not bipartite. Indeed, if e.g.\ $H$ is the disjoint union of triangles, then the Corr\'adi--Hajnal theorem~\cite{CH63} implies that $d_{\succ}(H)=\frac{4}{3}\abs{H}-2$. We shall elaborate more on the leading constant in the next section.

Our proof utilises both pseudorandomness from Szemer\'edi's regularity lemma and expansions for sparse graphs. The particular expander that we shall make use of is an extension of the one introduced by Koml\'os and Szemer\'edi, which has played an important role in some recent developments on sparse graph embedding problems, see e.g.~\cite{LM17,KLShS17}.

\medskip

\noindent\textbf{Organisation.} Applications of our main result will be given in Section~\ref{sec: application}. Preliminaries on expanders and basic building blocks are given in Section~\ref{sec: prelim}. In Section~\ref{sec: outline}, we outline the strategies for proving Theorem~\ref{thm: main}. Its proof is then given in Sections~\ref{sec: dense}--\ref{sec-sparse}. Lastly, concluding remarks and some open problems are given in Section~\ref{sec: rmk}.

\section{Applications}\label{sec: application}

Reed and Wood~\cite{RW16} raised several interesting questions on the average degree needed to force certain sparse graphs as minors. As corollaries of our main results, we answer all of these questions, and some others, with asymptotically optimal bounds.

\subsection{Planar graphs}

\begin{problem}[\cite{RW16}]What is the least constant $c>0$ such that every graph with average degree at least $ct$ contains every planar graph with $t$ vertices as a minor?\end{problem}

Since a planar graph has average degree at most $6$, their results imply that $c\leq 14.602$. We can deduce the asymptotic answer to their question; in fact, rather surprisingly, the value is the same for graphs of any fixed genus, not just planar graphs.

We find it convenient  to use the following notation. The above problem basically askes for the maximum of $d_{\succ}(F)$ over all graphs in some family $\mathcal{F}$ with at most $t$ vertices. Define \[d_{\succ}(\cF,t):=\inf\{c:~d(G)\geq c\Rightarrow G\succ H, ~\forall H\in\cF \text{ with } \abs{H}\leq t\}.\]

\begin{theorem}\label{planar}
For any $\eps>0$ and any $g\geq 0$ there exists $t_0$ such that for every $t\geq t_0$ the following hold:
\begin{itemize}
\item every graph with average degree at least $(3/2+\eps)t$ contains every graph of genus at most $g$ with $t$ vertices as a minor; but
\item there exists a graph of average degree $3t/2-O(1)$ which does not contain every planar graph with $t$ vertices as a minor.
\end{itemize}
That is, writing $\cF_g$ for all graphs with genus $g$, we have $d_{\succ}(\cF_g,t)=\left(3/2+o(1)\right)t$.
\end{theorem}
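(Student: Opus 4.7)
My plan is to derive the upper bound by applying Theorem~\ref{thm: main} to a carefully constructed auxiliary graph, and to exhibit the lower bound via an explicit extremal example. Fix $\eps>0$ and $g\ge 0$, and set $d=(3/2+\eps)t$. Apply Theorem~\ref{thm: main} with $\eps'=\eps/3$ and a large maximum-degree bound $\Delta=\Delta(g,\eps)$. It then suffices to construct, for every genus-$g$ graph $H$ with $|H|\le t$, a bipartite $\alpha_0$-separable graph $H^*$ satisfying $\Delta(H^*)\le\Delta$, $|V(H^*)|\le (3/2+\eps/2)t$, and $H\preccurlyeq H^*$. Theorem~\ref{thm: main} then produces a subdivision of $H^*$ inside $G$, from which $H$ is recovered as a minor, giving $G\succ H$.

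The key construction of $H^*$ exploits the four colour theorem. Choose a proper $4$-colouring $V(H)=V_1\cup V_2\cup V_3\cup V_4$ with $|V_1|\ge|V_2|\ge|V_3|\ge|V_4|$; then $|V_3\cup V_4|\le t/2$ and $H[V_1\cup V_2]$ is bipartite. Build $H^*$ by keeping each $v\in V_1\cup V_2$ as a singleton and splitting each $v\in V_3\cup V_4$ into a pair $\{v_L,v_R\}$ joined by an edge. Place $V_1$ together with all $v_L$'s on one side of the bipartition of $H^*$, and $V_2$ together with all $v_R$'s on the other; each edge of $H$ is then realised in $H^*$ by choosing endpoint representatives in opposite parts, which is always possible since each split vertex contributes one representative to each side and $V_1,V_2$ are already correctly placed. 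The result is bipartite with $|V(H^*)|=t+|V_3\cup V_4|\le 3t/2$, and contracting each pair $\{v_L,v_R\}$ back to $v$ recovers $H$, so $H\preccurlyeq H^*$. To enforce $\Delta(H^*)\le\Delta$, each vertex of degree exceeding $\Delta$ is replaced by a short path of degree-$3$ vertices routing one edge per path vertex; since $e(H^*)=O(t)$ this introduces at most $O(t/\Delta)\le\eps t/4$ extra vertices. Separability follows from the bounded-genus separator theorem of Alon--Seymour--Thomas applied to $H$ and transported through the local modifications. For $g\ge 1$, first remove $O(g)$ edges to obtain a planar subgraph, apply the construction above, and reinsert each removed edge via a short auxiliary path joining the appropriate representatives; since $g$ is fixed and $t\to\infty$, this accounts for only an $o(t)$ overhead.

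For the lower bound, one exhibits a planar graph $H_0$ on $t$ vertices together with a host $G_0$ of average degree $\ge 3t/2-O(1)$ such that $H_0\not\preccurlyeq G_0$. The construction mirrors the upper bound: $H_0$ is chosen so that any minor embedding necessarily occupies roughly $3t/2$ host vertices (reflecting the tight relation $K_{3,3}\succ K_4$ that motivates the vertex-splitting trick), and $G_0$ is a graph built from components that just fall short of this threshold. The principal obstacle is establishing the auxiliary graph construction with the sharp constant $3/2$: this requires the four colour theorem together with the vertex-splitting idea, and the degree-reduction step must be performed with care to preserve separability while keeping $|V(H^*)|\le(3/2+o(1))t$. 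For the genus-$g$ extension, the $O(g)$ non-planar corrections must be absorbed cleanly into $o(t)$ overhead so as not to inflate the leading constant, which is possible precisely because $g$ is fixed while $t\to\infty$.
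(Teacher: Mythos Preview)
Your upper bound for $g=0$ follows the same route as the paper: take a proper $4$-colouring, keep the two largest classes as the bipartition, split each remaining vertex into an adjacent pair, and apply Theorem~\ref{thm: main}. One wrinkle: performing the degree reduction \emph{after} the bipartite split, as you do, needs care, since replacing a vertex $v$ on side $L$ by a bare path $v_1v_2\cdots v_k$ places the $v_{2i}$ on side $R$, and routing an old neighbour of $v$ (necessarily in $R$) through such a $v_{2i}$ destroys bipartiteness. The paper sidesteps this by reducing degrees first (preserving genus) and splitting afterwards; your order can be made to work, but not quite as written.

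The genuine gap is the extension to $g\ge 1$. Your claim that one can ``remove $O(g)$ edges to obtain a planar subgraph'' is false in general. The toroidal grid $C_n\times C_n$ has genus $1$, yet cannot be made planar by deleting fewer than $n-2$ edges: if $|F|\le n-3$ then at least three full row-cycles and three full column-cycles survive intact, and their union contains a subdivision of $C_3\times C_3$, which is non-planar. Thus no function of $g$ alone bounds the required number of edge deletions. The paper instead uses Djidjev's theorem to delete $O_g(\sqrt{t})$ \emph{vertices} and obtain a planar induced subgraph; this is $o(t)$ for fixed $g$ and hence absorbable into the $\eps t$ slack when selecting the two large colour classes.

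Finally, your lower-bound paragraph is not a proof: you gesture at a construction but never specify $H_0$ or $G_0$, nor verify $H_0\not\preccurlyeq G_0$. The paper takes $H_0$ to consist of $\lfloor t/4\rfloor$ disjoint copies of $K_4$ and $G_0=K_{3\lfloor t/4\rfloor-1,\,n}$, using that any bipartite graph containing $K_4$ as a minor must have at least three vertices in each part (since $K_{2,n}$ is series-parallel).
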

\begin{proof}We first prove the second statement. Take any planar graph $H^*$ on $t$ vertices containing $\lfloor t/4\rfloor$ disjoint copies of $K_4$. It is easy to verify that $K_{2,n}$ is a series-parallel graph for any $n$, and so does not contain $K_4$ as a minor (see \cite{Duff}). Therefore any bipartite graph which does contain $K_4$ as a minor is not a subgraph of $K_{2,n}$ for any $n$, and so must have at least $3$ vertices in each part. If $G$ is a bipartite graph which contains $H^*$ as a minor, then $G$ must contain at least $3$ vertices in each part for each of the $\lfloor t/4\rfloor$ copies of $K_4$, so must contain at least $3\lfloor t/4\rfloor$ vertices in each part. Therefore the complete bipartite graph with parts of order $3\lfloor t/4\rfloor-1$ and $n$ does not contain $H^*$ as a minor. This graph has average degree at least $6\lfloor t/4\rfloor-3$ if $n$ is sufficiently large.

Next, we deduce the first statement from Theorem \ref{thm: main}. Let $\Delta = \lfloor20/\eps\rfloor$, and fix $\alpha\leq\alpha_0(\eps/2,\Delta)$. Let $H$ be an arbitrary $t$-vertex graph of genus at most $g$. If $H$ has a vertex of degree $k>\Delta$, replace it with two adjacent vertices of degrees $k-(\Delta-1)$ and $\Delta$; when doing this, allocate the neighbours of the original vertex to the two vertices in such a way as to preserve the genus. Continue until no vertices with degree bigger than $\Delta$ remain. Thus we obtain a graph $H'$ of genus at most $g$ with at most $t+ \sum_{v\in V(H)} \frac{d(v)}{\Delta-1} \leq t + (6t+O(1))/(\Delta-1) \leq (1+\eps/3) t$ vertices which contains $H$ as a minor.

If $g=0$ (i.e.\ $H'$ is planar) then take a four-colouring of $H'$, and set aside the two largest colour classes $A$ and $B$, which cover at least $(1+\eps/3)t/2$ vertices between them. If $g>0$, using a result of Djidjev \cite{Dji84} we may find a planar induced subgraph on $\abs{H'}-O(\sqrt{\abs{H'}})$ vertices. Thus, by extending a four-colouring of this subgraph, we may colour $H'$ in such a way that the two largest colour classes, $A$ and $B$, cover at least $\abs{H'}/2-o(t)$ vertices. So, in either case we have $\abs{V(H')\setminus (A\cup B)}\leq (1+\eps) t/2$ for $t$ sufficiently large.

We may assume that every other vertex has neighbours in both $A$ and $B$, since otherwise we could use a different colouring for which the total size of $A\cup B$ is larger. We define a bipartite graph $H''$ as follows. Vertices in $A\cup B$, and edges between them, are unchanged. For each vertex $v\in V(H')\setminus (A\cup B)$, $H''$ has two vertices $v_A,v_B$ with an edge between them. Every edge of $H'$ of the form $uv$ with $u\in A$ and $v\in V(H')\setminus (A\cup B)$ becomes an edge $uv_B$, and every edge of the form $uv$ with $u\in B$ and $v\in V(H')\setminus (A\cup B)$ becomes an edge $uv_A$. For every edge $vw$ with $v,w\in V(H')\setminus (A\cup B)$, choose an ordering $v,w$ arbitrarily and add the edge $v_Aw_B$. In the resulting graph $H''$, every vertex has degree at most that of the corresponding vertex in $H'$, and, by contracting every edge of the form $v_Av_B$, $H''$ contains $H'$ as a minor. 

The genus of $H''$ may be greater than $g$, but the bounded genus of $H'$ ensures that $H'$ is $\alpha/2$-separable for $t$ sufficiently large. Since any subset of $V(H')$ of size at most $\alpha\abs{H'}/2$ corresponds to a subset of $V(H'')$ of size at most $\alpha\abs{H''}$, $H''$ is $\alpha$-separable. Now $H''$ is a bipartite graph with at most $(3/2+\eps)t$ vertices, so by Theorem \ref{thm: main} we can find a subdivision of $H''$, which in turn contains $H$ as a minor, in any graph with average degree at least $(3/2+\eps)t$ provided $t$ is sufficiently large.
\end{proof}

\subsection{A universal bound for nontrivial minor-closed families}\label{sec-minor-closed}
Many important classes of graphs are naturally closed under taking minors, for example, graphs embeddable on a given surface considered in Theorem~\ref{planar}.
The seminal graph minor theorem of Robertson and Seymour (proved in a sequence of papers culminating in \cite{RS04}) shows that every minor-closed family can be characterised by a finite list of minimal forbidden minors. For example, the linklessly-embeddable graphs are defined by a minimal family of seven forbidden minors, including $K_6$ and the Petersen graph \cite{RST95}. The existence of a forbidden-minor characterisation has far-reaching algorithmic implications, since for any fixed graph $F$ there exists an algorithm to determine whether an $n$-vertex graph contains $F$ as a minor in $O(n^3)$ time \cite{RS95}, and hence there is a cubic-time algorithm (since improved to quadratic \cite{KKR}) to check for membership of any given minor-closed family; prior to these results it was not even known that the property of having a linkless embedding was decidable. However, the constants concealed by the asymptotic notation are typically prohibitively large. Furthermore, for many families a complete forbidden minor classification, and hence a specific algorithm, is not known, and the number of minimal forbidden minors can be extremely large, even for families that may be very naturally and simply defined. For example, there are over 68,000,000,000 minimal forbidden minors for the class of $Y\Delta Y$-reducible graphs \cite{Yu05}.

We can extend the methods of the previous subsection to minor-closed families more generally. For each $k\in \mathbb{N}$, define $\alpha_k(G):= \max\{ \abs{U}: U\subseteq V(G), \chi(G[U])=k\}$. So $\alpha_1(G)$ is the usual independence number and $\alpha_2(G)$ is the maximum size of the union of two independent sets.
\begin{theorem}\label{thm-minor-closed}
	Let $\cF$ be a nontrivial minor-closed family. For each $F\in \cF$ with $t$ vertices, we have 
	\[ 2t - 2\alpha(F) - O(1)  \leq d_{\succ}(F) \leq 2t - \alpha_2(F) + o(t).\]
\end{theorem}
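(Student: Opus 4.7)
The plan is to prove the two inequalities separately, using Theorem \ref{thm: main} to drive the upper bound. For the lower bound, I would take $G = K_{s,n}$ with $s := t - \alpha(F) - 1$ and $n$ very large. In any minor model of $F$ in $K_{s,n}$, each branch set contained in the large side of $K_{s,n}$ must be a single vertex, and two such singletons can never carry an edge of $F$ since the large side is independent; hence the indices whose branch set lies entirely on the large side form an independent set in $F$, of size at most $\alpha(F)$. Thus at least $t - \alpha(F)$ branch sets must meet the small side, forcing $s \ge t - \alpha(F)$, a contradiction. Since $K_{s,n}$ has average degree $2sn/(s+n) \to 2s = 2t - 2\alpha(F) - 2$ as $n \to \infty$, this yields $d_{\succ}(F) \ge 2t - 2\alpha(F) - 2$.

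For the upper bound I would mimic the proof of Theorem \ref{planar}, but with the four-colouring of the planar case replaced by a maximum bipartite induced subgraph. Fix $\eps > 0$ and $F \in \cF$ with $|F| = t$. Since $\cF$ is a nontrivial minor-closed family, $F$ is $K_h$-minor-free for some fixed $h$, so $e(F) = O(t)$, and $F$ is $o(1)$-separable by \cite{AST90}. Pick $U \subseteq V(F)$ of size $\alpha_2(F)$ with $F[U]$ bipartite, fix a bipartition $U = A \cup B$, and write $W = V(F) \setminus U$. Build a bipartite graph $F^*$ with parts $A \cup \{w_A : w \in W\}$ and $B \cup \{w_B : w \in W\}$ by replacing each $w \in W$ with two adjacent copies $w_A, w_B$, keeping edges inside $U$, rerouting each edge $uw$ with $u \in A$ (resp.\ $u \in B$) to $uw_B$ (resp.\ $uw_A$), and orienting each edge $ww' \in E(F[W])$ arbitrarily to $w_A w'_B$. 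Contracting the edges $w_A w_B$ recovers $F$ as a minor of $F^*$, and $|F^*| = 2t - \alpha_2(F)$.

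Next I would reduce the maximum degree of $F^*$ to some $\Delta = \Delta(\eps)$ by iteratively replacing each vertex $v$ of degree greater than $\Delta$ by a path of length two through a fresh vertex of the opposite part, so as to preserve bipartiteness. Since $e(F^*) = e(F) + |W| = O(t)$, this introduces only $O(t/\Delta) \le \eps t$ new vertices once $\Delta$ is large enough, so the resulting bipartite graph $F^{**}$ has $|F^{**}| \le 2t - \alpha_2(F) + \eps t$, maximum degree at most $\Delta$, and still contains $F$ as a minor. A routine check shows each such split enlarges a separator by at most two and the containing component by at most one, so the $o(1)$-separability of $F$ passes to $F^{**}$. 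Theorem \ref{thm: main} then locates a subdivision of $F^{**}$ in any $G$ with $d(G) \ge |F^{**}|/(1-\eps)$, giving $d_{\succ}(F) \le 2t - \alpha_2(F) + o(t)$. The main obstacle is coordinating the degree reduction so as to preserve both bipartiteness and $o(1)$-separability inside this tight vertex-count budget; the $K_h$-minor-free property of $F$ is what provides both the $O(t)$ edge bound that makes the reduction cheap and the Alon--Seymour--Thomas separator theorem underlying the $o(1)$-separability.
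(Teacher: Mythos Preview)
Your proof is correct and follows essentially the same strategy as the paper: the lower bound via $K_{s,n}$ is identical, and the upper bound combines the bipartite ``vertex-doubling'' construction with a degree-reduction step and the Alon--Seymour--Thomas separator theorem before invoking Theorem~\ref{thm: main}. The only difference is the order of operations---the paper (following the proof of Theorem~\ref{planar}) reduces the maximum degree first and then performs the bipartite split, whereas you split first and then reduce degrees via the path-of-length-two trick to preserve bipartiteness; both orderings work, and your separability accounting is in fact slightly more explicit than the paper's terse reference (note only that each split enlarges a component by two, not one, though this does not affect the argument).
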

\begin{proof}
It is well known that the $t$-vertex graphs in $\cF$ are $o(1)$-separable with at most $C_{\cF} t$ edges for some constant $C_{\cF}$ \cite{AST90}.

To prove the upper bound, take two disjoint independent sets $A$ and $B$ with $\alpha_2(F)=\abs{A\cup B}$.
By the same argument as in Theorem~\ref{planar}, we can define a bipartite graph $F''$ containing $F$ as a minor with $2t-\alpha_2(F) + o(t)$ vertices having bounded maximum degree. Apply Theorem~\ref{thm: main} to $F''$ to obtain the upper bound.

To prove the lower bound, consider $K_{s,n-s}$ where $s= t-\alpha(F)-1$. If it contains an $F$-minor, let $V_1,\dots, V_{\abs{F}}$ be the vertex sets corresponding to the vertices of $F$. By our choice of $s$, it is easy to see that at least $\alpha(F)+1$ of them have to completely reside in the independent set of size $n-s$ in $K_{s,n-s}$, which is impossible. Thus, $K_{s,n-s}$ does not contain an $F$-minor. By choosing large $n$, we have $d(K_{s,n-s}) \geq 2t-2\alpha(F)-O(1)$. 
\end{proof}

Theorem~\ref{thm-minor-closed} yields the following universal bound for all nontrivial minor closed families.
\begin{cor}\label{cor: }
	For any nontrivial minor closed family $\cF$, we have 
	$$d_{\succ}(\cF,t) \leq (2+o(1))t.$$ 
\end{cor}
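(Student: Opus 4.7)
The plan is to derive Corollary~\ref{cor: } essentially as a cosmetic consequence of the upper bound already proved in Theorem~\ref{thm-minor-closed}. For any graph $F$, the parameter $\alpha_2(F)$ is non-negative (it is trivially bounded below by the independence number, which is at least $1$ for any nonempty graph). Substituting this into the bound from Theorem~\ref{thm-minor-closed} gives
\[d_{\succ}(F)\ \leq\ 2\abs{F}-\alpha_2(F)+o(\abs{F})\ \leq\ 2\abs{F}+o(\abs{F}),\]
where the $o(\cdot)$ term depends only on the family $\cF$ (via the separability constant and the edge-density constant $C_{\cF}$ appearing in the proof of Theorem~\ref{thm-minor-closed}).

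To convert this pointwise estimate into the required uniform bound $d_{\succ}(\cF,t)\leq(2+o(1))t$, I would fix an arbitrary $\eps>0$ and choose $t_0=t_0(\cF,\eps)$ large enough that the error term satisfies $o(\abs{F})\leq \eps\abs{F}$ whenever $\abs{F}\geq t_0$. Then for every $F\in\cF$ with $t_0\leq \abs{F}\leq t$ we obtain $d_{\succ}(F)\leq(2+\eps)\abs{F}\leq(2+\eps)t$. There are only finitely many isomorphism classes of graphs $F\in\cF$ with $\abs{F}<t_0$, and for each such $F$ the bound $d_{\succ}(F)$ is some finite constant (for instance via Mader's classical bound), so the maximum over them is a constant $C=C(\cF,\eps)$, which is absorbed into $\eps t$ as soon as $t$ is sufficiently large. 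Taking the supremum of $d_{\succ}(F)$ over all $F\in\cF$ with $\abs{F}\leq t$ therefore yields $d_{\succ}(\cF,t)\leq(2+2\eps)t$ for all sufficiently large $t$, and sending $\eps\to 0$ completes the proof.

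There is no real obstacle: all of the substantive content lives in Theorem~\ref{thm-minor-closed}, and the only points to verify are the trivial inequality $\alpha_2(F)\geq 0$ and the standard bookkeeping needed to turn a family of $o(\abs{F})$ statements into a single $o(t)$ statement uniform over $\abs{F}\leq t$.
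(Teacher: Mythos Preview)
Your proposal is correct and matches the paper's approach: the paper states the corollary as an immediate consequence of Theorem~\ref{thm-minor-closed} without further argument, and your derivation via the trivial bound $\alpha_2(F)\geq 0$ together with the uniformity bookkeeping is exactly the right way to fill in the details.
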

We remark that the constant $2$ above cannot be improved as we shall see in Corollary~\ref{cor: tw-minor}.

\subsection{Graphs with given treewidth or clique minor}

Reed and Wood asked the following questions for specific minor-closed families.

\begin{problem}[\cite{RW16}]What is the least function $g_1$ such that every graph with average degree at least $g_1(k)\cdot t$ contains every graph with $t$ vertices and treewidth at most $k$ as a minor?\end{problem}

\begin{problem}[\cite{RW16}]What is the least function $g_2$ such that every graph with average degree at least $g_2(k)\cdot t$ contains every $K_k$-minor-free graph with $t$ vertices as a minor?\end{problem}

Graphs with treewidth at most $k$ are $k$-degenerate, and hence have at most $kt$ edges, and graphs without a $K_k$-minor have average degree $O(k\sqrt{\log k})$. Consequently the result of Reed and Wood~\cite{RW16} showed that $g_i(k)=O(\sqrt{\log k})$ for $i\in\{1,2\}$. As a corollary of Theorem~\ref{thm-minor-closed}, we get the following optimal bound of $g_i(k)=2+o_k(1)$, showing that somewhat surprisingly, when $k$ is sufficiently large, both the treewidth and the size of a largest clique minor play negligible roles in the leading coefficient.

\begin{cor}\label{cor: tw-minor}
	Every graph with average degree $(2+o_k(1))t$ contains every graph with $t$ vertices, which either has treewidth at most $k$ or is $K_k$-minor-free, as a minor.
\end{cor}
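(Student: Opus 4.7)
The plan is to derive this corollary as a direct specialisation of Corollary~\ref{cor: }, whose statement gives a $(2+o(1))t$ bound for any nontrivial minor-closed family. The only real task is to verify that the two families in question — graphs of treewidth at most $k$ and $K_k$-minor-free graphs — are nontrivial minor-closed families for each fixed $k$, and then to re-label the resulting error term to make the $k$-dependence explicit.

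First I would verify minor-closure. Writing $\mathcal{F}^{\mathrm{tw}}_k$ for graphs of treewidth at most $k$ and $\mathcal{F}^{\min}_k$ for $K_k$-minor-free graphs: $\mathcal{F}^{\min}_k$ is minor-closed by definition and nontrivial because $K_k\notin\mathcal{F}^{\min}_k$. For $\mathcal{F}^{\mathrm{tw}}_k$ a short check suffices — vertex deletion, edge deletion, and edge contraction each yield a tree decomposition of width at most the original (for contraction, merge the two endpoints inside every bag containing either of them), so treewidth cannot increase under taking minors, and $K_{k+2}$ witnesses nontriviality.

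Then I would invoke Corollary~\ref{cor: } separately for each family, obtaining $d_{\succ}(\mathcal{F}^{\mathrm{tw}}_k,t)\le(2+o(1))t$ and $d_{\succ}(\mathcal{F}^{\min}_k,t)\le(2+o(1))t$ as $t\to\infty$. Tracing back through Theorem~\ref{thm-minor-closed}, the rate in each $o(1)$ term is controlled by the separability constant of the family (from the Alon--Seymour--Thomas separator theorem) and by the linear edge bound $C_{\mathcal{F}}$, both of which are functions of $k$ alone ($C_{\mathcal{F}}=k$ for $\mathcal{F}^{\mathrm{tw}}_k$ via $k$-degeneracy, and $C_{\mathcal{F}}=O(k\sqrt{\log k})$ for $\mathcal{F}^{\min}_k$ via Kostochka--Thomason). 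Writing the error as $o_k(1)$ to highlight this $k$-dependence yields the stated bound. There is no genuine obstacle here — the heavy lifting already resides in Theorem~\ref{thm-minor-closed} — so the ``main'' step is really just noticing that a family parameterised by a single integer $k$ fits the framework of the universal bound.
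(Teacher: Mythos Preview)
Your upper-bound argument via Corollary~\ref{cor: } (equivalently Theorem~\ref{thm-minor-closed}) is fine and matches the paper's. But you have missed half of the content: the paper's proof also supplies a \emph{lower bound}, and this is what actually justifies writing $2+o_k(1)$ rather than simply $2+o(1)$. The text preceding the corollary explicitly calls the bound ``optimal'' and identifies $g_i(k)=2+o_k(1)$; establishing optimality requires exhibiting, for each $k$, a $t$-vertex graph in the family and a host graph of average degree close to $2t$ that fails to contain it as a minor. The paper does this by taking $H$ to be a disjoint union of copies of $K_{k+1}$ (which has treewidth $k$) and observing that $K_{(1-1/(k+1))t-1,\,n}$ cannot contain it as a minor, giving a lower bound of $2(1-1/(k+1))t$; the $K_k$-minor-free case uses copies of $K_{k-1}$ instead.

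Relatedly, your reading of $o_k(1)$ is off. You interpret it as recording the $k$-dependence of the $t\to\infty$ convergence rate in the upper bound, but that is not the point: the universal upper bound already gives constant $2$ for every fixed $k$. The subscript $k$ signals that the \emph{leading constant} $g_i(k)$ tends to $2$ as $k\to\infty$, and it is the lower-bound construction --- not separability constants or edge bounds --- that pins this down.
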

\begin{proof}
	The upper bound follows immediately from Theorem~\ref{thm-minor-closed}. Note that a disjoint union of copies of $K_{k+1}$ has treewidth $k$. Then the unbalanced complete bipartite graph $K_{(1-1/(k+1))t-1,n}$ provides the matching lower bound $2(1-1/(k+1))t=(2+o_k(1))t$.
	
	For graphs without $K_k$-minor, consider instead a disjoint union of copies of $K_{k-1}$.
\end{proof}

\subsection{Beyond minor-closed classes}

In Section \ref{sec-minor-closed}, the two properties of minor-closed families that we needed were $o(1)$-separability and bounded average degree. Many other sparse graph classes have these properties. In particular, any class which obeys a strongly-sublinear separator theorem is $o(1)$-separable, see \cite{MRS16}. 

A $k$-\textit{shallow minor} of a graph $G$ is a minor for which each contracted subgraph has radius at most $k$. We say that a graph class $\mathcal C$ has \textit{bounded expansion} if the average degree of $k$-shallow minors of graphs in $\mathcal C$ is bounded by a function of $k$; in particular, since $0$-shallow minors are just subgraphs, $\mathcal C$ itself has bounded average degree. If the bound is a polynomial function, we say that $\mathcal C$ has \textit{polynomial expansion}. These definitions were introduced by Ne\v{s}et\v{r}il and Ossona de Mendez \cite{NOdM08}.

Classes of polynomial expansion have strongly-sublinear separator theorems, and for hereditary classes the two notions are equivalent \cite{DN16}. Thus we may extend Theorem \ref{thm-minor-closed} to classes of polynomial expansion. Such classes include the $1$-planar graphs, that is, the graphs which may be embedded in the plane with each edge crossing at most one other edge once \cite{NOdMW}, and intersection graphs of systems of balls with only a bounded number meeting any point \cite{MTTV}.

Polynomial expansion is a much weaker property than being minor-closed. It is easy to see, for example, that any graph can be suitably subdivided to obtain a $1$-planar graph. However, Borodin \cite{Bor84} showed that all $1$-planar graphs are $6$-colourable, and since they include disjoint unions of $K_6$, we obtain the following tight result from the extension of Theorem \ref{thm-minor-closed} to classes of polynomial expansion.

\begin{cor}The class $\mathcal P_1$ of $1$-planar graphs satisfies $d_{\succ}(\mathcal P_1,t)=(5/3+o_t(1))t$.
\end{cor}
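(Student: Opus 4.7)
The plan is to combine three inputs: the extension of Theorem~\ref{thm-minor-closed} to classes of polynomial expansion (which applies to $\mathcal P_1$, since $1$-planar graphs have polynomial expansion by \cite{NOdMW}), Borodin's theorem \cite{Bor84} that every $1$-planar graph is $6$-colourable, and the observation (already recorded in the excerpt) that disjoint unions of copies of $K_6$ lie in $\mathcal P_1$.

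For the upper bound, fix any $H\in\mathcal P_1$ with $|H|\leq t$. By Borodin, $\chi(H)\leq 6$, so any proper $6$-colouring partitions $V(H)$ into six independent sets, and the union of the two largest covers at least $\lceil |H|/3\rceil$ vertices; hence $\alpha_2(H)\geq |H|/3$. The upper bound half of (the polynomial-expansion extension of) Theorem~\ref{thm-minor-closed} then yields
\[
d_{\succ}(H)\leq 2|H|-\alpha_2(H)+o(|H|)\leq \tfrac{5}{3}|H|+o(|H|)\leq \Bigl(\tfrac{5}{3}+o(1)\Bigr)t,
\]
and taking the supremum over admissible $H$ gives $d_{\succ}(\mathcal P_1,t)\leq (5/3+o(1))t$.

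For the matching lower bound, take $H^\star$ to be the disjoint union of $\lfloor t/6\rfloor$ copies of $K_6$. Then $H^\star\in\mathcal P_1$ and $|H^\star|\leq t$; moreover $\alpha(H^\star)=\lfloor t/6\rfloor$, because any independent set meets each $K_6$-component in at most one vertex. The lower bound half of Theorem~\ref{thm-minor-closed}, whose proof used only the $K_{s,n-s}$ construction and applies verbatim here, gives
\[
d_{\succ}(\mathcal P_1,t)\geq d_{\succ}(H^\star)\geq 2|H^\star|-2\alpha(H^\star)-O(1)=10\lfloor t/6\rfloor-O(1)\geq \tfrac{5}{3}t-O(1).
\]

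The one step deserving caution is the transfer of Theorem~\ref{thm-minor-closed} from minor-closed families to polynomial-expansion classes; but this is exactly what the discussion just before the corollary sets up, since its proof invoked minor-closedness only through $o(1)$-separability and bounded average degree, both of which $\mathcal P_1$ enjoys. Granting that transfer, the argument reduces to the one-line pigeonhole on $6$-colour classes for the upper bound and the disjoint-$K_6$ construction for the lower bound, and no further obstacles arise.
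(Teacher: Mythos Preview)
Your proof is correct and follows exactly the approach the paper intends: the upper bound via Borodin's $6$-colourability (giving $\alpha_2(H)\ge |H|/3$, which is precisely the instance of inequality~\eqref{eq: ESS} with $\chi=6$) combined with the polynomial-expansion extension of Theorem~\ref{thm-minor-closed}, and the lower bound via disjoint copies of $K_6$ and the $K_{s,n-s}$ construction. The paper leaves the corollary without an explicit proof, merely pointing to these ingredients in the preceding paragraph; you have simply written out the computation they omit.
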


\section{Preliminaries}\label{sec: prelim}

For $n\in \mathbb{N}$, let $[n]:=\{1,\dots, n\}$. Given a set $X$ and $k\in\mathbb{N}$, let $\binom Xk$ the family of all $k$-sets in $X$.  For brevity, we write $v$ for a singleton set $\{v\}$ and $xy$ for a set of pairs $\{x,y\}$.  We write $a=b\pm c$ if $b-c \leq a \leq b+c$. 
If we claim that a result holds whenever we have $0<a\ll b,c\ll d<1$, it means that there exist positive functions $f,g$ such that the result holds as long as $a<f(b,c)$ and $b<g(d)$ and $c<g(d)$. We will not compute these functions explicitly.
In many cases, we treat large numbers as if they are integers, by omitting floors and ceilings if it does not affect the argument. We write $\log$ for the base-$e$ logarithm.

\subsection{Graph notations}
Given graphs $H$ and $G$, in a copy of $H$-subdivision in $G$, we call the vertices that correspond to $V(H)$ the \emph{anchor} vertices of the subdivision.
For a given path $P=x_1\dots x_t$, we write $\Int(P) = \{x_2,\dots, x_{t-1}\}$ to denote the set of its internal vertices. Given a graph $H$, a set of vertices $S\subseteq V(H)$ and a subgraph $F\subseteq H$, denote by $H-S=H[V(H)\setminus S]$ the subgraph induced on $V(H)\setminus S$ and by $H\setminus F$ the spanning subgraph obtained from $H$ by removing edges in $F$.

Given a graph $G$, denote its average degree $2e(G)/\abs{G}$ by $d(G)$. 
For two sets $X,Y \subseteq  V(G)$, the (graph) $\emph{distance}$ between them is the length of a shortest path from $X$ to $Y$.
For two graphs $G,H$, we write $G\cup H$ to denote the graph with vertex set $V(G)\cup V(H)$ and the edge set $E(G)\cup E(H)$. A \emph{$k$-star} denotes a copy of $K_{1,k}$ which is a star with $k$ edges. Given  a collection of graphs $\cF=\{F_i: i\in I\}$, we write $V(\cF)=\bigcup_{i\in I}V(F_i)$ and $\abs{\cF}=\abs{I}$. For path $P$ and a vertex set $U$, we write $P|_U$ for the induced subgraph of $P$ on vertex set $V(P)\cap U$.

For a set of vertices $X\subseteq V(G)$ and $i\in \mathbb{N}$, denote 
\[N^i(X):= \{ u \in V(G): \text{ the distance between }X\text{ and } u\text{ is exactly }i\}\]
the $i$-th sphere around $X$,
and write $N^0(X)=X$, $N(X) := N^1(X)$, and for $i\in\mathbb{N}\cup\{0\}$, let $B^i(X) = \bigcup_{j=0}^{i} N^j(X)$ be the ball of radius $i$ around $X$.
We write $\partial(X)$ for the edge-boundary of $X$, that is, the set of edges between $X$ and $V(G)\setminus X$ in $G$. Given another set $Z\subseteq V(G)$, we write $N(X,Z)=N(X)\cap Z$ for the set of neighbours of $X$ in $Z$.

\subsection{Robust expander}
To define the robust graph expansion, we need the following function. For $\eps_1>0$ and $t>0$, let $\rho(x)$ be
the function
\[\label{epsilon}
\rho(x)=\rho(x,\eps_1,t):=\begin{cases}0 & \text{ if } x<t/5, \\
\eps_1/\log^2(15x/t) & \text{ if } x\ge t/5,\end{cases}\]
where, when it is clear from context we will not write the dependency on $\eps_1$ and $t$ of $\rho(x)$. Note that when $x\ge t/2$, while $\rho(x)$ is decreasing, $\rho(x)\cdot x$ is increasing. 

Koml\'os and Szemer\'edi~\cite{K-Sz94,K-Sz96} introduced a notion of expander $G$ in which any set $X$ of reasonable size expands by a sublinear factor, that is, $\abs{N_G(X)}\ge \rho(\abs{X})\abs{X}$. We shall extend this notion to a robust one such that similar expansion occurs even after removing a relatively small set of edges.

\begin{defin}\label{defn: expander}
	\noindent\textbf{$(\eps_1,t)$-robust-expander:} A graph $G$ is an \emph{$(\eps_1,t)$-robust-expander} if for any subset $X\subseteq V(G)$ of size $t/2\le \abs{X}\le \abs{G}/2$, and any subgraph $F\subseteq G$ with $e(F)\le d(G)\cdot \rho(\abs{X}) \abs{X}$, we have \[\abs{N_{G\setminus F}(X)}\ge \rho(\abs{X}) \abs{X}.\]
\end{defin}

We shall use the following version of expander lemma, stating that every graph contains a robust expander subgraph with almost the same average degree.

\begin{lemma}\label{lem-expander}
	Let $C>30, \eps_1 \leq 1/(10C), \eps_2<1/2, d>0$ and $\rho(x)=\rho(x,\eps_1,\eps_2d)$ as in~\eqref{epsilon}. Then every graph $G$ with $d(G)=d$ has a subgraph $H$ such that
	\begin{itemize}
		\item $H$ is an $\left(\eps_1,\eps_2d\right)$-robust-expander;
		\item $d(H)\geq (1-\delta)d$, where $\delta:=\frac{C\eps_1}{\log 3}$;
		\item $\delta(H)\geq d(H)/2$;
		\item $H$ is $\nu d$-connected, where $\nu:=\frac{\eps_1}{6\log^2(5/\eps_2)}$.
	\end{itemize}
\end{lemma}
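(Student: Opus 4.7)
The plan is to extract $H$ from $G$ by an iterative pruning procedure that fixes one defect at a time, and to show the total damage to the average degree is controlled by a convergent tail involving $\rho$. Initialise $H_0:=G$; at step $i$, if $H_i$ already satisfies all four conclusions, stop and set $H:=H_i$. Otherwise construct $H_{i+1}$ by one of two reductions. First, the min-degree prune: if some $v\in V(H_i)$ has $\deg_{H_i}(v)<d(H_i)/2$, delete $v$. This can only increase $d(H_i)$, so it incurs no loss. Second, the expansion defect: if robust expansion fails with witness $(X,F)$, set $X^+:=X\cup N_{H_i\setminus F}(X)$, so $\abs{X^+}\leq (1+\rho(\abs{X}))\abs{X}$ and the $H_i$-boundary of $X^+$ has at most $e(F)\leq d\cdot\rho(\abs{X})\abs{X}$ edges. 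Replace $H_i$ by whichever of $H_i[X^+]$ or $H_i[V(H_i)\setminus X^+]$ has the larger average degree after deleting these boundary edges; a short arithmetic argument shows that this choice satisfies $d(H_{i+1})\geq d(H_i)-O(d\cdot\rho(\abs{X}))$ while $\abs{H_{i+1}}$ decreases by a definite factor.

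The process terminates since $\abs{H_i}$ strictly decreases at every expansion-defect step. The key accounting is that the sequence of bad-set sizes $\abs{X_i}$ along these steps forms a geometric-like cascade, so the cumulative density loss telescopes to at most $c\cdot d\cdot\sum_i \rho(\abs{X_i})$, which by the integral comparison $\int \rho(x)/x\,dx=O(\eps_1/\log(15|X|/t))$ and the lower bound $\abs{X_i}\geq t/2=\eps_2 d/2$ is bounded by a constant multiple of $\eps_1$. With $C>30$ in the statement, the constants are chosen so that this is at most $\delta d$, giving $d(H)\geq(1-\delta)d$; and $\delta(H)\geq d(H)/2$ holds on termination because step (a) would otherwise apply.

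For $\nu d$-connectivity, suppose for contradiction that $H$ has a vertex cut $S$ with $\abs{S}<\nu d$ whose removal leaves parts of sizes $\abs{X}\leq\abs{V(H)\setminus(X\cup S)}$. If $\abs{X}\geq t/2$, then taking $F=\emptyset$ gives $\abs{N_H(X)}\leq\abs{S}<\nu d$, but $\rho(\abs{X})\abs{X}\geq\rho(\eps_2 d/2)\cdot \eps_2 d/2 = \eps_1\eps_2 d/(2\log^2(15/(2\eps_2)\cdot (\eps_2 d/2)/(\eps_2 d)))\geq\nu d$ by the choice of $\nu=\eps_1/(6\log^2(5/\eps_2))$, contradicting robust expansion of $H$. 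If $\abs{X}<t/2=\eps_2 d/2$, then any vertex of $X$ has at most $\abs{X}-1+\abs{S}<\eps_2 d/2+\nu d$ neighbours in $H$, which is less than $d(H)/2\geq(1-\delta)d/2$ for the stipulated parameters, contradicting the minimum-degree conclusion.

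The main obstacle is the density-loss bookkeeping in the expansion-defect step: choosing the side to keep and showing that the cumulative loss over a possibly long iteration stays below $\delta d$. The subtlety is that the per-step loss $d\cdot\rho(\abs{X})$ is largest when $\abs{X}$ is smallest, so one must argue that small-$\abs{X}$ defects cannot recur without substantially shrinking $\abs{H_i}$, allowing the sum $\sum\rho(\abs{X_i})$ to be estimated by a convergent integral $\int\rho(x)/x\,dx$ whose tail is of order $\eps_1/\log(5/\eps_2)$. Matching this tail against the parameters $\delta=C\eps_1/\log 3$ and $\nu=\eps_1/(6\log^2(5/\eps_2))$ in the statement then glues the three conclusions together.
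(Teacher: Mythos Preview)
Your iterative scheme has two genuine gaps. First, the claim that the $H_i$-boundary of $X^+:=X\cup N_{H_i\setminus F}(X)$ has at most $e(F)$ edges is false: edges of $H_i$ from $N_{H_i\setminus F}(X)$ to $V(H_i)\setminus X^+$ need not lie in $F$. The correct edge-count is $2e(H_i)\le 2e(H_i[X^+])+2e(H_i[\,\overline{X}\,])+2e(F)$ with $\overline{X}:=V(H_i)\setminus X$; note the two induced subgraphs \emph{overlap} on $N_{H_i\setminus F}(X)$, so you cannot simply keep one disjoint side. Second, and more seriously, the claim that $|H_{i+1}|$ decreases by a definite factor is false: if the larger-average-degree side is the complement of a small $X^+$, the order barely drops, so the number of expansion-defect steps---and hence the cumulative loss $\sum O(d\,\rho(|X_i|))$---is not controlled by anything in your argument. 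There is no reason the $|X_i|$ should form a ``geometric-like cascade''.

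The paper sidesteps both issues with a potential function that bakes in your integral comparison. Set $\gamma(x):=C\int_x^\infty \rho(u)/u\,du$ and $\phi(K):=d(K)\bigl(1+\gamma(|K|)\bigr)$, and take $H$ to be any $\phi$-maximal subgraph of $G$. Since $\gamma$ is decreasing, $\phi$-maximality immediately gives $d(K)\le d(H)$ for every $K\subseteq H$, hence $\delta(H)\ge d(H)/2$; and $\gamma(1)=C\eps_1/\log 3=\delta$ gives $d(H)\ge(1-\delta)d$. For robust expansion one feeds $d(H[X^+]),\,d(H[\,\overline{X}\,])\le d(H)$ into the overlapping decomposition above to get $|N_{H\setminus F}(X)|/|X^+|\ge \tfrac{1}{2}\bigl(\gamma(|X^+|)-\gamma(|H|)\bigr)-2\rho(|X^+|)$, and the integral form of $\gamma$ converts the right side into $\Omega(\rho(|X^+|))$ whenever $|X^+|\le 3|H|/4$. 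Thus a single maximisation replaces the uncontrolled iteration. (A smaller point: your connectivity case-split at $|X|=t/2$ fails for small $\eps_2$, since $\rho(t/2)\cdot t/2=\eps_1\eps_2 d/(2\log^2(15/2))$ can be much smaller than $\nu d$; the threshold should be of order $d$, e.g.\ $|X|\gtrless d/3$, so that in the upper range $\rho(|X|)|X|\ge \eps_1 d/(3\log^2(5/\eps_2))=2\nu d$.)
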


We remark that, though almost retaining the average degree, the robust expander subgraph $H$ in Lemma~\ref{lem-expander} could be much smaller than~$G$. For instance, if~$G$ is a union of many vertex disjoint small cliques, then $H$ could be just one of those cliques. Such drawback often makes it difficult to utilise expanders iteratively  within graphs. We include the proof of the robust expander lemma, Lemma~\ref{lem-expander}, in the appendix of the online version.

A key property of the robust expanders that we shall use is that it has small (logarithmic) diameter.

\begin{lemma}[\cite{K-Sz96} Corollary~2.3]\label{lem: path}
	If $G$ is an $n$-vertex $(\eps_1,t)$-robust-expander, then for any two vertex sets $X_1,X_2$ each of size at least $x\geq t/2$, and a vertex set $W$ of size at most $\rho(x)x/4$, there exists a path in $G-W$ between $X_1$ and $X_2$ of length at most 
	\[\frac{2}{\eps_1}\log^3\bfrac{15n}{t}.\]
\end{lemma}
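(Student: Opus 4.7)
The plan is to perform a breadth-first search (BFS) from both $X_1$ and $X_2$ simultaneously inside the subgraph $G - W$, using the robust expansion of $G$ to show that both BFS balls grow fast enough to collide within $K := \frac{1}{\eps_1}\log^3(15n/t)$ steps on each side.

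First, I would set $B_0 := X_1$ and iteratively $B_{i+1} := B_i \cup N_{G-W}(B_i)$; define $B_0' := X_2$ and $B_i'$ analogously. The key claim is that while $t/2 \leq |B_i| \leq n/2$,
\[
|B_{i+1}| \geq |B_i|\Bigl(1 + \tfrac{3}{4}\rho(|B_i|)\Bigr).
\]
To see this, apply Definition~\ref{defn: expander} with $F = \emptyset$ and $X = B_i$ to get $|N_G(B_i)| \geq \rho(|B_i|)|B_i|$; monotonicity of $y \mapsto \rho(y)y$ on $[t/2,\infty)$ gives $\rho(|B_i|)|B_i| \geq \rho(x)x \geq 4|W|$, so at most a quarter of these neighbours lie in $W$, and the rest contribute to $B_{i+1}\setminus B_i$.

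To convert this multiplicative growth into an iteration count, I would split the evolution into dyadic phases $|B_i| \in [2^j x, 2^{j+1} x]$ for $j = 0, 1, \ldots, \log_2(n/x)$. Using $\log(1+y) \geq y\log 2$ for $y \in [0,1]$, each phase takes at most $O(1/\rho(2^j x)) = O(\eps_1^{-1}\log^2(15 \cdot 2^j x/t))$ steps, and summing across phases via a standard integral estimate (analogous to $\int_0^L s^2\,ds = L^3/3$) yields a total of at most $\eps_1^{-1}\log^3(15n/t) = K$ steps for $|B_K|$ to surpass $n/2$, and likewise for $|B_K'|$.

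Finally, since $B_K, B_K' \subseteq V(G)\setminus W$ each have size greater than $n/2$, they must share a common vertex $v$. By construction $v$ is reachable in $G-W$ from $X_1$ (resp.\ $X_2$) by a path of length at most $K$; concatenating these yields a walk in $G-W$ from $X_1$ to $X_2$ of length at most $2K = \frac{2}{\eps_1}\log^3(15n/t)$, which contains a path of the same or smaller length. The main technical obstacle is the dyadic sum: the per-step expansion factor and the per-phase count must be estimated tightly enough that the accumulated $\log^2$ contributions telescope into the target $\log^3$ bound with exactly the leading constant $2/\eps_1$. A minor technicality is to ensure $|B_0| \geq t/2$ in the presence of a possible overlap $X_i \cap W$, which is handled by $|W| \leq \rho(x)x/4$ combined with $x \geq t/2$.
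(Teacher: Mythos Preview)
The paper does not give its own proof of this lemma; it is quoted as Corollary~2.3 of Koml\'os--Szemer\'edi~\cite{K-Sz96} and used as a black box throughout. Your sketch is precisely the standard argument from that source: grow balls from both sides, use the expansion property (with $F=\varnothing$) minus the small loss $|W|\le\rho(x)x/4$ to obtain a per-step gain of at least $\tfrac34\rho(|B_i|)\,|B_i|$, and sum the dyadic-phase step counts to a $\log^3$ bound. So there is nothing substantive to compare against, and your plan is correct.

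Two remarks on the details you flag. First, the leading constant is not a genuine obstacle: your inequality $\log(1+y)\ge y\log 2$ gives at most $\tfrac{4}{3\rho(2^{j+1}x)}$ steps per doubling phase, and replacing $\sum_j \log^2(15\cdot 2^{j+1}x/t)$ by the integral $\tfrac{1}{\log 2}\int u^2\,du$ yields roughly $\tfrac{4}{9\eps_1}\log^3(15n/t)$ per side, comfortably inside $K$. Second, your handling of $X_i\cap W$ is not quite right as written: if $x=t/2$ exactly then $|X_1\setminus W|\ge x(1-\rho(x)/4)$ can dip below $t/2$, so the expansion hypothesis fails for $B_0=X_1\setminus W$. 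The clean fix is to keep $B_0=X_1$ (so $|B_0|\ge t/2$ is guaranteed), note that every vertex added after step~$0$ lies outside $W$, and then argue that $|B_K\cap B_K'|>|W|$ so that the balls meet at a vertex of $V(G)\setminus W$; for this it suffices to observe that the last increment before $|B_i|$ passes $n/2$ is at least $\tfrac34\rho(x)x\ge 3|W|$, which forces $|B_K|>n/2+|W|$ (and likewise for $B_K'$).
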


\subsection{Exponential growth for small sets}
In an $(\eps_1,t)$-robust-expander graph, for a set $X$ with size at least $t/2$, the ball $B^i(X)$ grows with the radius $i$. For our purpose, we need to quantify how resilient this growth is to deletion of some thin set around $X$. 

\begin{defin}\label{def: shortest paths}
	For a set $X\subseteq W$ of vertices, the paths $P_1,\ldots,P_q$ are \emph{consecutive shortest paths from $X$ within $W$} if the following holds.
For each $i\in [q]$, $P_i|_W$ is a shortest path from $X$ to some vertex $v_i\in W\setminus X$ in the graph restricted to $W-\bigcup_{j\in [i-1]}V(P_j)$.
\end{defin}

In particular, the following proposition shows that the rate of expansion for small sets is almost exponential in a robust expander even after deleting a few consecutive shortest paths. 

\begin{prop}\label{prop: expansion after deleting shortest path}
Let $0<1/d  \ll  \eps_1, \eps_2 \ll 1$.
Suppose $G$ is an $n$-vertex $(\eps_1,\eps_2 d)$-robust-expander and $X,Y$ are sets of vertices with $\abs{X}= x \geq \eps_2 d$ and  $\abs{Y}\leq \frac{1}{4} \cdot \rho(x) \cdot x$. Let $P_1,\dots, P_q$ be consecutive shortest paths in $G-Y$ from $X$ within $B^{r}_{G-Y}(X)$, where $1\le r\le \log n$ and $q<x \log^{-8}{x}$, and let	$P=\bigcup_{i\in [q]} V(P_i)$.  Then for each $i\in [r]$, we have 
\[\abs{B^{i}_{G-P-Y}(X)} \geq \exp(i^{1/4}).\]
\end{prop}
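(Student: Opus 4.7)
The plan is to induct on $i$, showing that $B_i := B^i_{G-P-Y}(X)$ admits a multiplicative growth $|B_{i+1}|\geq |B_i|(1+\rho(|B_i|)/4)$ per step. For the base, each $P_j$ is a shortest path from $X$, so it meets $X$ only at its starting vertex and $|X\cap P|\leq q < x\log^{-8} x$; combined with $|Y|\leq \rho(x)x/4\leq x/4$ we get $|B_0| = |X\setminus(P\cup Y)|\geq x/2\geq \eps_2 d/2$. Because $1/d\ll \eps_1,\eps_2$, this gives $|B_i|\geq|B_0|\geq \exp(i^{1/4})$ for all $i\leq \log^4(\eps_2 d/2)$, which covers the small-$i$ range.

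For the inductive step, assume $i>\log^4(\eps_2 d/2)$ and $|B_i|<\exp((i+1)^{1/4})$ (otherwise $|B_{i+1}|\geq |B_i|$ already suffices); in particular $|B_i|\leq n^{o(1)}\leq n/2$. Since also $|B_i|\geq \eps_2 d/2=t/2$, applying the $(\eps_1,\eps_2 d)$-robust-expansion property of $G$ to $B_i$ with no edge deletion gives $|N_G(B_i)|\geq \rho(|B_i|)|B_i|$, and since $N_{G-P-Y}(B_i)=N_G(B_i)\setminus(P\cup Y)$,
\[\bigabs{N_{G-P-Y}(B_i)}\;\geq\;\rho(|B_i|)|B_i|-|P|-|Y|.\]
Because $|B_i|\geq x/2$ and $\rho$ is decreasing, $\rho(|B_i|)|B_i|\geq \rho(x/2)(x/2)\geq \rho(x)x/2$, so the hypothesis $|Y|\leq \rho(x)x/4$ yields $|Y|\leq \rho(|B_i|)|B_i|/2$. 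For $|P|$, each $P_j$ is a shortest path in $G-Y$ to a vertex of $B^r_{G-Y}(X)$, so has at most $r+1$ vertices, and the paths are vertex-disjoint outside $X$, giving $|P|\leq q(r+1)\leq 2x\log n/\log^8 x$. The ratio $|P|/(\rho(x)x)=O(\log^3 n/(\eps_1\log^8 x))=o(1)$ because $\log x\geq \log(\eps_2 d)$ with $d$ sufficiently large, so $|P|\leq \rho(|B_i|)|B_i|/4$. Combining, $|B_{i+1}|\geq |B_i|(1+\rho(|B_i|)/4)$.

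Iterating, $\log|B_i|\geq \log|B_0|+\sum_{j=0}^{i-1}\rho(|B_j|)/4$. Under the worst-case inductive assumption $|B_j|\leq \exp((j+1)^{1/4})$, a direct computation gives $\rho(|B_j|)\geq \eps_1/(j+1)^{1/2}$ (using $d\geq 15/\eps_2$ so that $\log(15/t)\leq 0$), so the sum is at least $\eps_1\sqrt{i}/2$. Since $i>\log^4(\eps_2 d/2)$ and $d$ is large enough that $\log(\eps_2 d/2)\geq 2/\eps_1$, we have $\eps_1\sqrt{i}/2\geq i^{1/4}$, giving $\log|B_i|\geq i^{1/4}$. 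The main obstacle is the tight bookkeeping in the middle paragraph: the $\log^{-8}$ factor in the hypothesis on $q$ is precisely what keeps $|P|$ negligible against $\rho(|B_i|)|B_i|$ at every scale, while the relatively weak conclusion $\exp(i^{1/4})$ is what allows us to use only the length bound $|V(P_j)|\leq r+1$ rather than any finer chord-argument bound on $|E(B_i,V(P_j))|$.
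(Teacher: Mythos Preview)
There is a genuine gap in your argument, and it occurs precisely where you claim the crude length bound suffices. You assert that $|P|\leq q(r+1)\leq 2x\log n/\log^8 x$ and then that $|P|/(\rho(x)x)=O(\log^3 n/(\eps_1\log^8 x))=o(1)$ ``because $\log x\geq \log(\eps_2 d)$ with $d$ sufficiently large''. But the hypotheses impose \emph{no} relation between $n$ and $d$: the condition $1/d\ll\eps_1,\eps_2$ says only that $d$ is large, while $n$ may be, say, $\exp(\log^{9} d)$. Take $x=\eps_2 d$ and $r=\log n$; then at the threshold $i\approx\log^4(\eps_2 d/2)$ you have $\rho(|B_i|)|B_i|=\Theta(\eps_1\eps_2 d)$, whereas $|P|\approx \eps_2 d\cdot\log n/\log^8(\eps_2 d)$, and the ratio $|P|/(\rho(|B_i|)|B_i|)\approx \log n/(\eps_1\log^8 d)$ blows up. Your closing remark that ``the relatively weak conclusion $\exp(i^{1/4})$ is what allows us to use only the length bound $|V(P_j)|\leq r+1$'' is therefore exactly backwards: the chord argument you dismiss is indispensable.

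What the paper does instead is use the consecutive-shortest-path property to show that each $P_j$ meets $N_{G-Y}(Z_i)$ in at most $i+2$ vertices (otherwise one could shortcut $P_j$ through $Z_i$). This gives $|N_{G-Y}(Z_i)\cap P|\leq (i+2)q$, a bound depending on $i$ rather than on $r\leq\log n$; it is then compared against $\rho(|Z_i|)|Z_i|$ in two phases (linear growth for $i\leq\log^4 x$, then $f(i)=\exp(i^{1/4})$). The $\log^{-8}x$ in the hypothesis on $q$ is calibrated to make $(i+2)q$ negligible at every $i$, not to make $q(r+1)$ negligible. (A secondary issue: your bound $|V(P_j)|\leq r+1$ is also unjustified, since $P_j|_W$ is shortest only in $W$ minus the earlier paths, so may well exceed length $r$.)
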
 
\begin{proof}
For each $i\ge 0$, let $Z_i= B^{i}_{G-P-Y}(X)$. 
As $P_1,\dots, P_q$ are consecutive shortest paths from $X$, for each $i\ge 0$, each path $P_j$, $j\in[q]$, can intersect with the set $N_{G-Y}(Z_i)$ on at most $i+2$ vertices. Indeed, otherwise we can replace the initial segment of $P_j$ with a path in $Z_i\cup N_{G-Y}(Z_{i})$ of length $i+1$ to get a shorter path in $G-Y-\bigcup_{k\in [j-1]}V(P_k)$, contradicting the choice of $P_j$. Thus, $\abs{N_{G-Y}(Z_i)\cap P}\le (i+2)q$.
Consequently, the expansion of $G$ implies for each $i\ge 0$ that
	\begin{align*}
		\abs{Z_{i+1}} &= \abs{Z_i} + \abs{N_G(Z_i)\setminus(Y\cup P)}\ge \abs{Z_i} + \rho(\abs{Z_i})\abs{Z_i} - \abs{Y}-\abs{N_{G-Y}(Z_i)\cap P} \\
		&\geq \abs{Z_i} + \frac{3}{4}\rho(\abs{Z_i})\abs{Z_i} - (i+2)q.
	\end{align*}
	
Let
\[f(z)=\exp(z^{1/4}) \quad \text{and} \quad g(z):= x + \frac{1}{2}\rho(x)x z.\]
We first use induction on $i$ to show that for each $0\le i\le \log^4x$, $\abs{Z_i}\ge g(i)$. Indeed, $\abs{Z_0} =\abs{X}=x=g(0)$. Then, as $\rho(z)z$ is increasing when $z\ge x$ and $ \frac{1}{4}\rho(x)x \geq  \frac{(i+2)x}{ \log^8{x}} > (i+2)q$ due to $i\le \log^4x$, we see that 
\[\abs{Z_{i+1}}\ge \abs{Z_i}+ \frac{3}{4}\rho(\abs{Z_i})\abs{Z_i} - (i+2)q\geq \abs{Z_i}+ \frac{1}{2}\rho(x)x = \abs{Z_i}+ g(i+1)-g(i)\ge g(i+1).\]

We may then assume $i>\log^4x$, as $f(i)\le g(i)\le \abs{Z_i}$ when $i\le \log^4x$. Now, as $i>\log^4x$, $\frac{f(i)}{i^{7/4}}\ge \frac{f(\log^4x)}{(\log^4x)^{7/4}}=\frac{x}{\log^7x}$ and so
\[ (i+2)q<i\cdot \frac{2x}{\log^8x}\le i\cdot \frac{f(i)}{i^{7/4}}= \frac{ f(i)}{i^{3/4}}.\]
Also note that $f(i+1)-f(i)\le \frac{f(i)}{i^{3/4}}$ and $\rho(\abs{Z_i})\abs{Z_i}\ge \rho(f(i))f(i)\ge\frac{\eps_1f(i)}{i^{1/2}}$.
Thus, we have
\begin{align*}
\abs{Z_{i+1}}&\ge \abs{Z_i}+\frac{3}{4}\rho(\abs{Z_i})\abs{Z_i} - (i+2)q \geq 
\abs{Z_i}+\frac{3\eps_1f(i)}{4i^{1/2}}- \frac{ f(i)}{i^{3/4}}\\
&\geq \abs{Z_i}+\frac{ f(i)}{i^{3/4}} \geq \abs{Z_i}+ f(i+1)-f(i)\ge f(i+1),
\end{align*}
as desired.
\end{proof}

\subsection{Basic building structures}
The following structures will serve as basic building blocks for our constructions of subdivisions.

\begin{defin}[$(2a,b)$-Sun]\label{defn-sun}
	For integers $a\geq b\geq 0$, a \textit{$(2a,b)$-sun} is a bipartite graph consisting of a cycle $x_1,\dots, x_{2a}$  and leaves $y_1,\dots, y_b$, where for each $i\in [a]$ the vertex $x_{2i}$ is adjacent to at most one leaf, and for each $i\in [b]$ the leaf $y_i$ is adjacent to a vertex $x_{2j}$ for some $j\in [a]$.
\end{defin}
Note that a $(2a,0)$-sun is just an even cycle of length $2a$.
\begin{figure}[ht]
\begin{minipage}[t]{.3\textwidth}
\begin{center}
\begin{tikzpicture}[thick,scale=.85]
\filldraw (0:2) circle (0.05);
\filldraw (36:2) circle (0.05);
\filldraw (72:2) circle (0.05);
\filldraw (108:2) circle (0.05);
\filldraw (144:2) circle (0.05) node [anchor=north west] {$x_{2a}$};
\filldraw (180:2) circle (0.05) node [anchor=west] {$x_1$};
\filldraw (216:2) circle (0.05) node [anchor=south west] {$x_2$};
\filldraw (252:2) circle (0.05) node [anchor=south west] {$\cdots$};
\filldraw (288:2) circle (0.05);
\filldraw (324:2) circle (0.05);
\draw (0:2) -- (36:2) -- (72:2) -- (108:2) -- (144:2) -- (180:2) -- (216:2) -- (252:2) -- (288:2) -- (324:2) -- cycle;
\filldraw (0:3) circle (0.05);
\draw (0:2) -- (0:3);
\filldraw (72:3) circle (0.05);
\draw (72:2) -- (72:3);
\filldraw (216:3) circle (0.05);
\draw (216:2) -- (216:3);
\draw[white] (0,3.57061) circle (1); 
\end{tikzpicture}
\end{center}
\caption{A $(2a,b)$-sun with $a=5$ and $b=3$.}
\end{minipage}\hfill\begin{minipage}[t]{.65\textwidth}
\begin{center}
\begin{tikzpicture}[thick, scale=.75]
\filldraw[fill=purple!35!white] (0,0) circle (.7);
\draw[purple,<-] (-45:.8) -- (-45:1.2) node [anchor=west] {head};
\node at (-90:.8) [anchor=north] {M};

\draw (0:5) circle (.7);
\path (0:5) +(90:.8) node [anchor=south west] {$\abs{D_4}\leq s$};
\path (0:5) +(-45:.8) coordinate (leg1);
\path (0:5) +(-45:1.2) coordinate (leg2);
\draw[blue,<-] (leg1) -- (leg2) node [anchor=west] {leg};
\draw (60:5) circle (.7);
\path (60:5) +(90:.8) node [anchor=south] {$D_3$};
\draw (120:5) circle (.7);
\path (120:5) +(90:.8) node [anchor=south] {$D_2$};
\draw[purple, ->] (120:5) -- ++(90:.6) node[pos=.5, anchor=west] {$r$};
\draw (180:5) circle (.7);
\path (180:5) +(90:.8) node [anchor=south east] {$D_1$};

\path (180:5) +(60:1) coordinate (a1);
\path (120:5) +(-120:1) coordinate (a2);
\path (120:5) +(0:1) coordinate (a3);
\path (60:5) +(180:1) coordinate (a4);
\path (60:5) +(-60:1) coordinate (a5);
\path (0:5) +(120:1) coordinate (a6);

\draw[loosely dashed,blue] (a1) -- (a2);
\draw[loosely dashed,blue] (a3) -- (a4);
\draw[loosely dashed,blue] (a5) -- (a6) node [pos=0.5, anchor=south west] {distance $\geq\tau$};
\draw[loosely dashed,blue] (4,-.35) -- (1,-.35);

\draw plot [smooth] coordinates {(180:.7) (170:2) (-175:3.5) (180:4.3)};
\node at (178:2.8) [anchor=south] {$P_1$};
\draw[purple,<->] (-.7,-.7) -- (-4.3,-.7) node [pos=.5, fill=white] {$\leq 10r$};
\draw plot [smooth] coordinates {(0:.7) (0:1.5) (15:3) (0:4.3)};
\node at (3,0) [anchor=south] {$P_4$};
\draw plot [smooth] coordinates {(60:.7) (70:2.2) (60:3.5) (60:4.3)};
\node at (67:3) [anchor=south] {$P_3$};
\draw plot [smooth] coordinates {(120:.7) (100:2) (130:3) (120:4.3)};
\node at (120:2.7) [anchor=south] {$P_2$};
\end{tikzpicture}
\end{center}
\caption{A $(4,s,r,\tau)$-nakji (Definition \ref{defn-nakji}).}
\end{minipage}
\end{figure}

\begin{defin}[$(h_1,h_2,h_3)$-unit]\label{defn-unit}
	For $h_1,h_2,h_3 \in \mathbb{N}$, a graph $F$ is an \emph{$(h_1,h_2,h_3)$-unit}~if it contains distinct vertices $u$ (the \emph{core vertex} of $F$) and $x_1,\ldots,x_{h_1}$, and $F = \bigcup_{i \in [h_1]}(P_i \cup S_{x_i})$, where
	\begin{itemize}
		\item $\lbrace P_i: i\in[h_1]\rbrace$ is a collection of pairwise internally vertex disjoint paths, each of length at most $h_3$, such that $P_i$ is a $u,x_i$-path.
		\item $\lbrace S_{x_i}:i\in [h_1]\rbrace$ is a collection of vertex disjoint $h_2$-stars such that $S_{x_i}$ has centre $x_i$ and $\bigcup_{i \in [h_1]} (V(S_{x_i})\setminus \lbrace x_i\rbrace)$ is disjoint from $\bigcup_{i \in [h_1]} V(P_i)$.
	\end{itemize}
	Define the \emph{exterior} $\Ext(F):=\bigcup_{i \in [h_1]}(V(S_{x_i})\setminus  \lbrace x_i\rbrace )$ and \emph{interior} $\Int(F):=V(F)\setminus \Ext(F)$.
	For every vertex $w\in \Ext(F)$, let $P(F,w)$ be the unique path from the core vertex $u$ to $w$ in $F$.
\end{defin}

\begin{defin}[$(h_0,h_1,h_2,h_3)$-web]\label{defn-web}
	For $h_0,h_1,h_2,h_3 \in \mathbb{N}$, a graph $W$ is an \emph{$(h_0,h_1,h_2,h_3)$-web}~if it contains distinct vertices $v$ (the \emph{core vertex} of $W$) and $u_1,\ldots,u_{h_0}$, and  $W = \bigcup_{i \in [h_0]}(Q_i \cup F_{u_i})$, where
	\begin{itemize}
		\item $\lbrace Q_i:i\in[h_0]\rbrace$ is a collection of pairwise internally vertex disjoint paths such that $Q_i$ is a $v,u_i$-path of length at most $h_3$.
		\item $\lbrace F_{u_i}:i\in [h_0]\rbrace$ is a collection of vertex disjoint $(h_1,h_2,h_3)$-units such that $F_{u_i}$ has core vertex $u_i$ and $\bigcup_{i \in [h_0]} (V(F_{u_i})\setminus\lbrace u_i\rbrace)$ is disjoint from $\bigcup_{i \in [h_0]} V(Q_i)$.
	\end{itemize}
	Define the \emph{exterior} $\Ext(W):=\bigcup_{i \in [h_0]}\Ext(F_{u_i})$, \emph{interior} $\Int(W):=V(W)\setminus \Ext(W)$ and \emph{centre} $\Cen(W):= \bigcup_{i \in [h_0]}V(Q_i)$.
	For every vertex $w \in \Ext(W)$, let $P(W,w)$ be the unique path from the core vertex $v$ to $w$ in $W$.
\end{defin}

\begin{figure}[ht]
\begin{center}
\begin{tikzpicture}[thick, scale=.85]
% centre and unit boxes and labels (drawn first so underneath)
\filldraw[orange!25!white, rounded corners] (0.25,3.5) rectangle (4.6,-3);
\draw[orange,->] (2,-3.5) node[anchor=north]{centre} -- (2,-3.1);
\draw [ultra thick, green!80!black, decoration={brace,mirror,amplitude=5pt}, decorate] (-.15,-5) -- (8.15,-5) node [pos=0.5,anchor=north,yshift=-5pt] {interior};
\draw[purple, rounded corners] (4.25,-1) rectangle (9,-4.5);
\node at (6.75,-1) [purple, anchor=south] {$(h_1,h_2,h_3)$-unit}; 

% core and centre paths
\draw[blue] (0,0) circle (0.15);
\filldraw (0,0) circle (0.05);
\node at (-.15,-.15) [anchor=south east,blue] {core vertex};
\filldraw (4.5,3) circle (0.05);
\draw plot [smooth] coordinates {(0,0) (1,1.25) (3,2) (4.5,3)};
\filldraw (4.5,1.5) circle (0.05);
\draw plot [smooth] coordinates {(0,0) (1,.4) (2,.5) (3.5,1) (4.5,1.5)};
\node at (5,1.5) [yshift=1ex] {$\vdots$};
\node at (3.5,0) {$\vdots$};
\filldraw (4.5,-2.5) circle (0.05);
\draw plot [smooth] coordinates {(0,0) (1,-.5) (2.5,-1.5) (3.5,-2.35) (4.5,-2.5)};
\draw[<->, blue!50!purple] (0,0.5) .. controls (0,2) and (3,3.25) .. (4.5,3.25) node[midway, fill=orange!25!white] {$\leq h_3$};
\draw[purple] (-45:0.5) arc [start angle=-45, end angle=75, radius=0.5];
\node [purple, anchor=west] at (0.5,0) {$h_0$ branches};

% upper unit
\filldraw (8,4.5) circle (0.05);
\draw (8,4.5) -- ++(.75,.25) -- ++(0,-.5) -- cycle;
\draw plot [smooth] coordinates {(4.5,3) (6.25,3.75) (7,4) (7.5,4.6) (8,4.5)};

\filldraw (8,3.5) circle (0.05);
\draw (8,3.5) -- ++(.75,.25) -- ++(0,-.5) -- cycle;
\draw plot [smooth] coordinates {(4.5,3) (5.5,3.1) (7,3.5) (8,3.5)};

\filldraw (8,2.5) circle (0.05);
\draw (8,2.5) -- ++(.75,.25) -- ++(0,-.5) -- cycle;
\draw[green!80!black] (8,2.5) ++(-45:0.5) node [anchor=north] {$h_2$-star} arc [start angle=-45, end angle=45, radius=0.5];
\draw plot [smooth] coordinates {(4.5,3) (5.5,2.75) (6,2.6) (7,2.7) (8,2.5)};

\draw[blue] (4.5,3) ++(-45:0.5) arc [start angle=-45, end angle=60, radius=0.5] node [anchor=south] {$h_1$ branches};

% lower unit 
\filldraw (8,-2) circle (0.05);
\draw (8,-2) -- ++(.75,.25) -- ++(0,-.5) -- cycle;
\draw plot [smooth] coordinates {(4.5,-2.5) (5.5,-2.25) (5.75,-2.4) (7,-2.1) (8,-2)};

\filldraw (8,-3) circle (0.05);
\draw (8,-3) -- ++(.75,.25) -- ++(0,-.5) -- cycle;
\draw plot [smooth] coordinates {(4.5,-2.5) (6,-2.9) (7,-2.9) (7.5,-2.8) (8,-3)};

\filldraw (8,-4) circle (0.05);
\draw (8,-4) -- ++(.75,.25) -- ++(0,-.5) -- cycle;
\draw plot [smooth] coordinates {(4.5,-2.5) (5,-3) (6,-3.4) (7.5,-3.5) (8,-4)};

\draw[<->, blue!50!purple] (4.5,-1.5) -- (8,-1.5) node[midway, fill=white] {$\leq h_3$};

\end{tikzpicture}
\end{center}
\caption{An $(h_0,h_1,h_2,h_3)$-web.}
\end{figure}
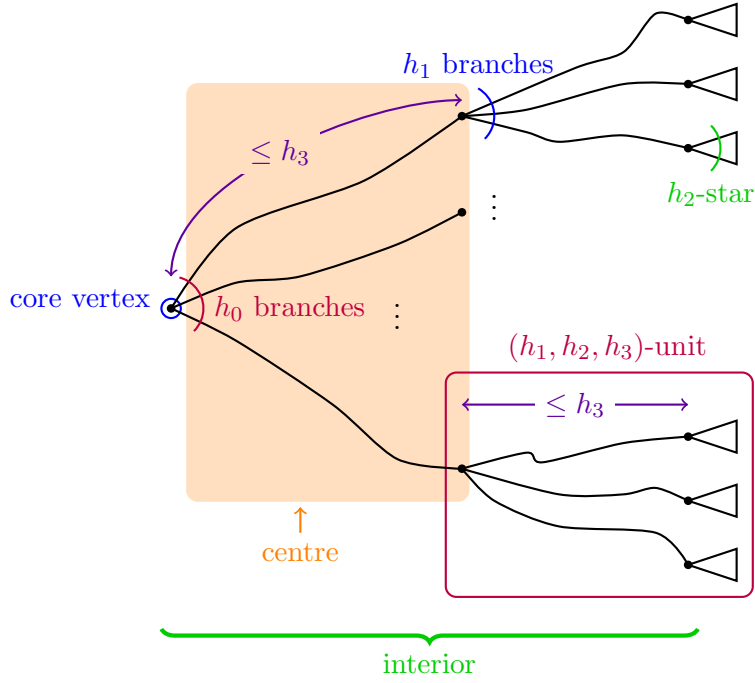

\begin{defin}[$(t,s,r,\tau)$-nakji]\label{defn-nakji}
	Given $t,s,r,\tau\in\mathbb{N}$, a graph $N$ is a \emph{$(t,s,r,\tau)$-nakji} \footnote{Nakji means `long arm octopus' in Korean.} in $G$ if it contains vertex disjoint sets $M$, $D_i$, $i\in [t]$, each having size at most $s$, and paths $P_i$, $i\in[t]$  such that for each $i\in[t]$
	\begin{itemize}
		\item  $P_i$ is an $M,D_i$-path with length at most $10r$, and all paths $P_i$, $i\in[t]$, are pairwise internally disjoint;\label{S1}
		
		\item $D_i$ has diameter at most $r$, and all $D_i$, $i\in[t]$, are a distance at least $\tau$ in $G$ from each other and from $M$, and they are disjoint from internal vertices of $\bigcup_{i\in[t]}P_i$.\label{S2}
	\end{itemize}
	We call $M$ the \emph{head} of the nakji and each $D_i$, $i\in[t]$, a \emph{leg}.
\end{defin}

\section{Outline of the proofs}\label{sec: outline}
To prove Theorem~\ref{thm: main}, we first use Lemma~\ref{lem-expander} to pass to a robust expander subgraph without  losing much on the average  degree. Depending on the density of the expander, we use different approaches. Roughly speaking, when the expander has positive edge density, we will utilise pseudorandomness via the machinery of the graph regularity lemma and the blow-up lemma (Lemma~\ref{lem: dense});  while if the expander is not dense, then we exploit its sublinear expansion property (Lemmas~\ref{lem: moderate} and~\ref{lem: very sparse}).

\begin{lemma}\label{lem: dense}
	Let $0< 1/d, \alpha \ll \delta, \eta, 1/\Delta <1$ and $\delta\ll\eps$ and $H$ be a graph with at most $(1-\eps)d$ vertices with $\Delta(H)\leq \Delta$ and $G$ is an $n$-vertex graph.	Suppose that $d> \eta n$ and $H$ is bipartite and $\alpha$-separable.
	If $d(G)\geq (1-\delta)d$, then $G$ contains $H$ as a subgraph.
\end{lemma}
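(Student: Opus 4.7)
The plan is to combine Szemerédi's regularity lemma with the blow-up lemma of Komlós--Sárközy--Szemerédi. Since $d > \eta n$, the graph $G$ has edge density at least $(1-\delta)\eta$, and this density is inherited by the reduced graph; the $\alpha$-separability of $H$, with $\alpha$ chosen much smaller than the reciprocal of any constant arising from the regularity lemma, breaks $H$ into a negligible separator and components each much smaller than a single cluster of the partition. I would apply the regularity lemma to $G$ with parameters $\alpha \ll 1/M_0 \ll \eps_0 \ll \delta_1 \ll \delta,\eta,1/\Delta$, obtaining a partition $V_0 \cup V_1 \cup \cdots \cup V_M$ with $|V_0| \le \eps_0 n$ and $|V_i|=m$ for $i \ge 1$, and let $R$ be the reduced graph on $[M]$ whose edges are the $\eps_0$-regular pairs of density at least $\delta_1$. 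A standard edge count---discarding edges incident to $V_0$, within a single cluster, in irregular pairs, or in sparse regular pairs---yields
\[ e(R) \ge \binom{M}{2}\left(\tfrac{d(G)}{n} - O(\eps_0 + \delta_1 + 1/M)\right) \ge \tfrac{\eta}{3}\binom{M}{2}. \]

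Next, using the $\alpha$-separability, I would write $V(H) = S \cup C_1 \cup \cdots \cup C_r$ with $|S| \le \alpha|H|$ and each $|C_j| \le \alpha|H| \ll m$. Since $|H| \le (1-\eps)d$ and $Mm \ge (1-\eps_0)n$ we have $|H| \le (1-\eps/3)Mm$, so there is $\Theta(\eps m)$ of slack per cluster on average. The plan is then to assign the vertices of $H$ to the clusters $V_1,\ldots,V_M$ so that every edge of $H$ sits inside a pair corresponding to an edge of $R$: each bipartite component $C_j$ is placed across some edge $ij \in E(R)$ with its two sides going into $V_i$ and $V_j$, balancing so that no cluster exceeds $(1-\eps_0)m$ in total load; the separator $S$ is placed greedily afterwards, with each $s \in S$ going into a cluster whose $R$-neighbourhood contains all clusters currently holding $s$'s (at most $\Delta$) neighbours in $H$. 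Once this assignment exists, passing to super-regular subsets of the used pairs (a standard step losing only an $O(\eps_0)$-fraction of each cluster) and applying the blow-up lemma pair-by-pair realises the embedding.

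The hard part will be producing an assignment that supports the greedy placement of the separator: for each $s \in S$ a cluster must exist whose $R$-neighbourhood simultaneously contains the up to $\Delta$ clusters holding its $H$-neighbours. The approach is to first restrict the embedding to a suitably ``universal'' sub-structure of $R$---for instance a dense core obtained via a Fox--Sudakov dependent-random-choice argument, inside which any $\le \Delta$ vertices have linearly-large common neighbourhood---and to distribute all components only inside this core, using the density $\eta/3$ to provide enough edges. Since $|S| \le \alpha|H| \ll m$ and $\Delta(H) \le \Delta$ is bounded, the cumulative load from the $S$-placements is negligible compared to the per-cluster slack, so the greedy step succeeds, after which the blow-up lemma completes the embedding.
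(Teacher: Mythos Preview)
Your overall strategy (regularity lemma + blow-up lemma) matches the paper's, but the combinatorial heart of your argument is different and, as written, contains a real gap.

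The problem is the capacity of your ``universal core''. You correctly compute that the reduced graph $R$ has edge density at least $d(G)/n - O(\eps_0+\delta_1)$, but you then discard this and use only the weaker bound $\eta/3$. Your dependent-random-choice step, applied with density parameter $\eta$, produces a core $U \subseteq [M]$ whose size depends only on $\eta$ and $\Delta$; in particular $|U| \le c(\eta,\Delta)M$ for some constant $c(\eta,\Delta)$ that can be much smaller than $1$. The total capacity of the clusters indexed by $U$ is therefore at most $c(\eta,\Delta)\cdot Mm \approx c(\eta,\Delta)\cdot n$. But nothing in the hypotheses prevents $d$ from being close to $n$ (indeed $d > \eta n$ only gives a lower bound), in which case $|H| = (1-\eps)d$ is close to $(1-\eps)n$, and you need essentially all of the clusters, not a $c(\eta,\Delta)$-fraction of them. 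Your own inequality $|H|\le (1-\eps/3)Mm$ already shows that the slack per cluster is only $\Theta(\eps)$, so restricting to a sub-linear-sized core cannot work. A secondary issue is that even ignoring the separator, the balanced packing of arbitrary small bipartite components onto $R$-edges so that no cluster overflows is a non-trivial constrained bin-packing; you assert it without argument.

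The paper sidesteps both difficulties by exploiting the \emph{full} density $d' := d/n$ of $R$ rather than just $\eta$, and by replacing the direct separator decomposition with the bandwidth formulation (via the B\"ottcher--Pruessmann--Taraz--W\"urfl equivalence). They pass to a $2$-connected subgraph $R'$ of $R$ with $\delta(R')\ge (d'-3\tau)r/2$ and use the Voss--Zuluaga long-cycle theorem to find either an odd cycle of length at least $(d'-4\tau)r$ or, when $R'$ is bipartite, a ``sun'' (an even cycle with pendant leaves) on at least $(d'-4\tau)r$ vertices. This structure $R_0$ has bounded maximum degree (at most $3$), and crucially it contains roughly $d'r$ clusters, giving total capacity $\approx d'r\cdot(n/r)=d$, which is exactly what is needed for $|H|\le(1-\eps)d$. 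The bandwidth ordering of $H$ is then wrapped around $R_0$ to give a balanced homomorphism (Lemmas~\ref{lem: H partition odd}/\ref{lem: H partition even}), and the blow-up lemma finishes. The essential point you are missing is that one must find a bounded-degree substructure of $R$ whose size scales with $d'r$, not with $\eta r$.
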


\begin{lemma}\label{lem: moderate}
Let $0<1/d\ll \eta\ll\eps_1,\eps_2\ll \eps, 1/\Delta<1$, and $H$ be a graph with at most $(1-\eps)d$ vertices with $\Delta(H)\leq \Delta$ and $G$ is an $n$-vertex graph and $m=\frac{2}{\eps_1}\log^3\bfrac{15n}{\eps_2d}$. Suppose $G$ is an $(\eps_1,\eps_2d)$-robust-expander with $d(G)\geq \eps^2 d$.  If $m^{100}\leq d\leq \eta n$, then $G$ contains an $H$-subdivision.
\end{lemma}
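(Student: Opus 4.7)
The plan is to build the $H$-subdivision incrementally, using the two main tools available in this moderate regime: Lemma~\ref{lem: path}, which routes short paths between large vertex sets in a robust expander, and Proposition~\ref{prop: expansion after deleting shortest path}, which produces near-exponential ball growth even after deleting thin obstacles. Together, these let us first place a skeleton of branch-vertex gadgets inside $G$ and then connect them one edge at a time via short paths, with the separability of $H$ controlling the order of routing.

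First, I would fix suitable parameters $r,s,\tau$ and, using Proposition~\ref{prop: expansion after deleting shortest path} iteratively, construct a family $\{N_v : v\in V(H)\}$ of pairwise vertex-disjoint $(\Delta,s,r,\tau)$-nakjis in $G$, one per branch vertex of $H$. Each $N_v$ has head $M_v$ containing an anchor $a_v$ and at least $\Delta$ legs $D_{v,1},\ldots,D_{v,\Delta}$, with $\tau\ge 10m$ and each leg of diameter at most $r$ and size $s$. The legs will serve as the large "ports" through which edges of $H$ connect. Since $d\ge m^{100}$ and $n\ge d/\eta$, parameters can be chosen so that the total footprint $\sum_v |N_v|$ is much smaller than $n$, while each $D_{v,i}$ is simultaneously large enough to support later applications of Lemma~\ref{lem: path} with a substantial forbidden set.

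Second, order the edges of $H$ using its $\alpha$-separator $S$: process edges incident to $S$ first, then handle each component of $H-S$ in turn (this keeps the currently active vertex set at any moment of size $O(\alpha|V(H)|)$). For each edge $e=uv$ in order, pick fresh unused legs $D_{u,i(e)}$ and $D_{v,j(e)}$ and apply Lemma~\ref{lem: path} to $G$ with forbidden set $W_e$ consisting of the vertices already committed to previously routed paths and the not-yet-used portions of the other nakjis. The resulting $D_{u,i(e)},D_{v,j(e)}$-path of length $\le m$, concatenated with short in-nakji segments from $a_u$ to $D_{u,i(e)}$ and from $D_{v,j(e)}$ to $a_v$, gives the path of the subdivision representing $e$. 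Since distinct edges of $H$ use distinct legs and fresh routing paths, the resulting paths are pairwise internally disjoint and the anchors $\{a_v\}$ form the branch vertices of an $H$-subdivision.

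The main obstacle will be to check that Lemma~\ref{lem: path} is applicable at every step, i.e., that $|W_e|\le \rho(s) s/4$ for every $e$. A naive bound on the total routing cost is $e(H)\cdot m\le \Delta dm/2$, which dwarfs the $\rho(\eps_2 d)\cdot\eps_2 d =\Theta(d/\log^2)$ slack one would get from ports of size only $\eps_2 d$. The resolution is to take the port size $s$ polynomial in $m$ (and $\Delta$): then $\rho(s)s$ exceeds $\Delta dm$, which is affordable precisely because $d\ge m^{100}$ leaves polynomial slack for $|N_v|\cdot|V(H)|\ll n$. A secondary subtlety is to construct the nakji skeleton disjointly to begin with; this is where Proposition~\ref{prop: expansion after deleting shortest path} is invoked, since growing each $N_v$ amounts to iteratively finding exponentially expanding balls in $G$ after removing earlier nakjis and the shortest paths used to build them.
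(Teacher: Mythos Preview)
Your approach has a genuine quantitative gap. You propose building $|V(H)|\le d$ pairwise vertex-disjoint nakjis whose legs have size $s$ large enough that $\rho(s)s>\Delta dm$; this forces $s\gtrsim dm$ (up to polylog factors). The total footprint of the nakji skeleton is then at least $\Delta d\cdot s\gtrsim \Delta d^2 m$. But Lemma~\ref{lem: moderate} only assumes $d\le\eta n$, so $n$ can be as small as $d/\eta$; in that regime $\Delta d^2 m\gg n$ and there is simply no room for disjoint nakjis of this size. Your claimed check that ``$\sum_v|N_v|$ is much smaller than $n$'' fails precisely at the dense end of the moderate window. A secondary point: you invoke an $\alpha$-separator of $H$, but separability is not among the hypotheses of this lemma (nor is bipartiteness), so that step is not available here, even if in the end you do not really need it.

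The paper resolves this by using webs rather than nakjis in the moderate regime. The crucial feature of webs is that only their \emph{interiors} (of size $O(m^{13})$ each) need to be pairwise disjoint, while their large \emph{exteriors} (of size $dm^9$) are allowed to overlap freely. Thus $2d$ webs cost only $O(dm^{13})\ll n$ in disjoint vertices, yet each web still provides a port of size $dm^9$ for routing via Lemma~\ref{lem: path}. The argument also first peels off the set $L$ of vertices of degree $\ge dm^{10}$ (handling $|L|\ge d$ directly via Lemma~\ref{lem: L small}), then builds the webs inside $G-L$ where the bounded maximum degree makes the star-and-unit construction possible. Proposition~\ref{prop: expansion after deleting shortest path} is not used at all in this lemma; it is reserved for Lemma~\ref{lem: very sparse}, where the nakji machinery genuinely lives.
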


\begin{lemma}\label{lem: very sparse}
Let $0<1/d\ll \nu,\delta \ll\eps_1,\eps_2\ll \eps, 1/\Delta<1$,  and $H$ be a graph with at most $(1-\eps)d$ vertices with $\Delta(H)\leq \Delta$ and $G$ is an $n$-vertex graph and $m=\frac{2}{\eps_1}\log^3\bfrac{15n}{\eps_2d}$. Suppose $H$ is bipartite and $G$ is an $(\eps_1,\eps_2d)$-robust-expander with $d(G)\geq (1-\delta)d$ and $\delta(G)\geq d(G)/2$. If $d < m^{100}$, then $G$ contains an $H$-subdivision.
\end{lemma}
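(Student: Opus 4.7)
The plan is to construct the $H$-subdivision in $G$ by choosing an anchor $\phi(v) \in V(G)$ for each $v \in V(H)$, growing a web- or nakji-style structure (Definitions~\ref{defn-web} and~\ref{defn-nakji}) around $\phi(v)$ with at least $\Delta$ branches pairwise far apart, and then, for each edge $uv \in E(H)$, connecting an unused branch of $\phi(u)$ to one of $\phi(v)$ by an internally disjoint short path supplied by Lemma~\ref{lem: path}. In this very sparse regime $d < m^{100}$ we have $n \gg d$ (so there is ample ambient room), but the price is that connecting paths may have length as large as $m = \Theta\bigl(\log^3(n/\eps_2 d)\bigr)$, which is the main source of technical complication.

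I would proceed in three stages. First, I would grow the $|V(H)| \leq (1-\eps)d$ vertex-disjoint nakjis one by one, each of size polylogarithmic in $n$, with branches at pairwise distance $\tau \gg m$. Proposition~\ref{prop: expansion after deleting shortest path} is essential here: even after earlier nakjis and shortest paths have been removed, small sets still grow almost exponentially, so fresh anchors and legs can always be carved out. Second, I would order the vertices of $H$ using $\alpha$-separability: process a separator $S$ with $|S| \leq \alpha|H|$ first (a negligible fraction of the budget), then each component of $H \setminus S$ in turn, exploiting the bipartition of $H$ within each component to alternate between the two sides. Third, for each edge $uv \in E(H)$ in this order, I would select an unused branch $x$ at $\phi(u)$ and $y$ at $\phi(v)$ and invoke Lemma~\ref{lem: path} with the forbidden set $W$ equal to the vertices already committed to the subdivision (outside the currently available branches). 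The resulting $x,y$-path has length at most $m$, is internally disjoint from everything built so far, and by the $\tau$-separation of legs it cannot short-circuit through any other anchor.

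The main obstacle will be maintaining two tight budgets simultaneously across the $|E(H)| \leq \Delta d/2$ path-insertions. We must keep the total number of committed vertices below the threshold $\rho(|X|)\cdot|X|$ of Definition~\ref{defn: expander} for every relevant $X$ so that the robust-expander hypothesis of Lemma~\ref{lem: path} continues to hold; since each path costs at most $m$ vertices while $\rho(\eps_2 d)$ is only of order $1/\log^2(1/\eps_2)$, this is feasible only with careful iterative bookkeeping, leaning heavily on Proposition~\ref{prop: expansion after deleting shortest path} to certify that disjoint pieces can still be found. Separately, achieving the sharp leading constant $1$ rather than some weaker $c>1$ in $|V(H)| \leq (1-\eps)d$ requires highly efficient anchor assignment, and this is where the bipartiteness of $H$ enters: the two colour classes are embedded into disjoint pools inside $V(G)$, sidestepping the parity/triangle obstruction that rules out $c=1$ for non-bipartite $H$ (as in the Corr\'adi--Hajnal example noted earlier in the paper).
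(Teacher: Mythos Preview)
Your proposal has two genuine gaps. First, you misidentify where bipartiteness and the sharp constant $(1-\eps)$ are used, and you invoke $\alpha$-separability even though it is not a hypothesis of Lemma~\ref{lem: very sparse}. In the paper's proof, bipartiteness plays no role whatsoever in the embedding or connection steps; it enters only in Claim~\ref{cl: ave deg after deleting L}. There, if too many vertices of $G-L$ have at least $(1-\eps)d/2$ neighbours in $L$, one extracts a set $X\subseteq L$ of size $(1-\eps)d/2$ such that every $\Delta$-subset of $X$ has $\Delta$ private common neighbours outside $L$, and this structure already contains every $(1-\eps)d$-vertex \emph{bipartite} graph of maximum degree $\Delta$ as a subgraph. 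That is the only place the sharp constant matters; in the nakji construction and linking the bound $\abs{H}\le d$ would suffice.

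Second, and more seriously, you miss the structural heart of the argument. The paper does not grow a nakji around a single anchor vertex; indeed, by Definition~\ref{defn-nakji} the head and legs must lie at pairwise distance at least $\tau$, so they cannot be grown outward from a common centre. The actual route is: remove the high-degree set $L$; show $G-L$ is still dense (Claim~\ref{cl: ave deg after deleting L}, the step requiring bipartiteness); iteratively pull out $n^{0.99}$ pairwise far-apart subexpanders $F\subseteq G-L$ with $d(F)\ge\eps d/10$; use Lemmas~\ref{lem: moderate} and~\ref{lem: sparse regular bdd max deg} to dispose of the cases where some $F$ has medium density or is large and almost regular, forcing all remaining $F$ to be tiny; then assemble each nakji from $\Delta+1$ \emph{separate} subexpanders (one head $C_{i,0}$, $\Delta$ legs $C_{i,j}$), linked by short paths found via the averaging in Claim~\ref{cl: one expand} and Proposition~\ref{prop: expansion after deleting shortest path}. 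Without the subexpander framework you have no mechanism to guarantee that your chosen anchors expand past $L$, which is precisely the obstacle the paper's averaging over collections of $d$ subexpanders is designed to overcome.
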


Our main theorem readily follows, assuming these three lemmas. 

\begin{proof}[Proof of Theorem~\ref{thm: main}]
For given $\eps$, let $\eta, \nu,\delta, \eps_1,\eps_2, \alpha_0, 1/d_0$ be small enough so that Lemma~\ref{lem: dense}--\ref{lem: very sparse} holds for all $\alpha \leq \alpha_0$ and $d>d_0$.
We apply Lemma~\ref{lem-expander} to $G$ to obtain a subgraph $G'$ which is an $n'$-vertex $(\eps_1,\eps_2d)$-robust-expander and $d(G')\geq (1-\delta)d$.

If $d\ge \eta n'$, then as $H$ is $\alpha$-separable, Lemma~\ref{lem: dense} implies that $H\subseteq G'$.
If $d < \eta n'$, then Lemmas~\ref{lem: moderate} and~\ref{lem: very sparse} imply that $G$ contains an $H$-subdivision. This proves the theorem.	
\end{proof}

In the rest of this section, we outline the ideas in our constructions. Let $G$ be an expander and $H$ be the bounded-degree bipartite graph whose subdivision we want to embed in $G$.

\subsection{Embeddings in dense graphs}
The regularity lemma essentially partitions our graph $G$ into a bounded number of parts, in which the bipartite subgraphs induced by most of the pairs of parts behave pseudorandomly. The information of this partition is then stored to a (weighted) fixed-size so-called reduced graph $R$ which inherits the density of $G$. 

We seek to embed $H$ in $G$ using the blow-up lemma, which boils down to finding a `balanced' bounded-degree homomorphic image of $H$ in $R$. This is where the additional separable assumption on $H$ kicks in. The separability of $H$ (in the form of sublinear bandwidth) enables us to cut $H$ into small pieces to offer suitable `balanced' homomorphic images.

Now, if the reduced graph $R$ has chromatic number at least three, the density  of $R$ inherited from $G$ is just large enough to guarantee an odd cycle long enough in $R$ to serve as our bounded-degree homomorphic image of $H$. 

It is, however, possible that $R$ is a bipartite graph, hence all cycles within are even cycles, which might not be long enough for our purpose. Indeed, in the worst case scenario that $H$ is an extremely asymmetric bipartite graph, an even cycle has to be twice as long as that of an odd one to be useful. This is because in the odd cycle we can circumvent the asymmetry of $H$ by `breaking the parity' via wrapping around the odd cycle twice.

To handle the case when $R$ is bipartite, instead of cycles, we will make use of the sun structure (Definition~\ref{defn-sun}), in which the leaves attaching to the main body of sun help in balancing out the asymmetry of $H$.

\subsection{Embeddings in robust expanders with medium density}
The robust expansion underpins all of our constructions of $H$-subdivisions when the graph $G$ is no longer dense. At a high level, in $G$, we anchor on some carefully chosen vertices and embed paths between anchors (corresponding to the edge set of $H$) one at a time. 

As these paths in the subdivision need to be internally vertex disjoint, to realise this greedy approach we will need to build a path avoiding a certain set of vertices. This set of vertices to avoid contains previous paths that we have already found and often some small set of `fragile' vertices that we wish to keep free.

To carry out such robust connections, we use the small-diameter property of expanders (Lemma~\ref{lem: path}). Let $m$ be the diameter of $G$. Recall that $H$ is of order at most $d$ with bounded degree and we need to embed $e(H)=O(d)$ paths. Thus, all in all, the set of vertices involved in all connections, say $W$, is of size $O(dm)$. To enjoy Lemma~\ref{lem: path}, we want to anchor at vertices with large `boundary' compared to $W$, that is, being able to access many ($dm^{10}$ say) vertices within short distance. 

With that being said, if there are now $d$ vertices of high degree (at least $dm^{10}$), we can easily finish the embedding anchoring on these high degree vertices. 
This almost enables us to view $G$ as if it is a `relatively regular' graph. In reality, what happens is that we can assume the set of high degree vertices, $L$, is small: $\abs{L}<d$; and deduce from the robust expansion property that, $G-L$ is still dense. It is worth pointing out that this is where we need to extend the original notion of expander to this robust one.

Without high degree vertices, we turn to the web structure (Definition~\ref{defn-web}), in which the core vertex has a large `boundary' (the web's exterior) of size about $dm^9$. If there are $d$ webs of suitable size, we can then anchor on their core vertices and connect pairs via the exteriors of the corresponding webs. One thing to be careful about is that a web will become useless if the (few) vertices in its centre are involved in previous connections. To prevent this, when constructing paths between exteriors, we protect the `fragile' centres of all webs.

Lastly, as $G$ is `relatively regular', we can pull out many large stars (size roughly $d$) and link them up to find the webs one by one. The construction of webs is one of the places we require $G$ to be not too sparse ($d$ being a large power of $m$ suffices).

\subsection{Embeddings in sparse robust expanders}
The current way of building and connecting webs breaks down if the expander is too sparse, say with average degree at most $\log\log n$. We will have to rely on other structures to build subdivisions in this case.

Let us first look at the easier problem of finding $H$-minors, in which case we just need to find $d$ large balls (and contract them afterwards) and find $O(d)$ internally disjoint paths between them. Note that now $\abs{L}<d\le m^{100}$ is quite small. Suppose additionally that $G-L$ has average degree $\Omega(d)$ and within it we can  find $d$ vertices, $v_1,\ldots, v_d$, pairwise a distance $\sqrt{\log n}$ apart, such that for each $v_i$, the ball $B_i$ of radius say $(\log\log n)^{20}$ around it has size at least $m^{200}$. So each $B_i$ is large enough to enjoy exponential growth (Proposition~\ref{prop: expansion after deleting shortest path}) avoiding all paths previously built. Now to get, say, a $v_i,v_j$-path, we first expand $B_i, B_j$ to larger balls with radius say $(\log n)^{1/10}$. These larger balls are so gigantic that we can connect them avoiding all the smaller balls $\bigcup_{i\in[d]}B_i$. It is left to find such $v_i$ and $B_i$. We can find them one by one, by collectively growing a set $U$ of pairwise far apart vertices past $L$ and using an averaging argument to locate the next $v_i$ that expands well in $G-L$. 

Coming back to embedding $H$-subdivisions, we shall follow the general strategy as that of finding minors. However, an immediate obstacle we encounter is the following. To get a subdivision instead of a minor, we need to be able to lead up to $\Delta(H)=O(1)$ many paths arriving at $B_i$ disjointly to $v_i$. In other words, each anchor vertex $v_i$ has to expand even after removing $O(1)$ disjoint paths starting from itself. Here comes the problem: in the minor case, we just need to expand $U$ ignoring a smaller set $L$; whereas now $U$ is asked to expand past a larger set of $\Theta(\abs{U})$ vertices that are used in the previous connections. Our expansion property is simply too weak for this.

This is where the nakji structure (Definition~\ref{defn-nakji}) comes into play. It is designed precisely to circumvent this problem by doing everything in reverse order. Basically, instead of looking for anchor vertices that expand robustly, we rather anchor on nakjis and link them via their legs first and then extend the paths from the legs in each nakji's head using connectivity.

Why do we require that the head and legs of a nakji are stretched far apart? This is so that, before linking nakjis together to a subdivision, we can expand each leg without bumping into any other part to an enormous size, so that each connection made leaves irrelevant structures untouched. 

The remaining task is then to find many nakjis in $G-L$. This is done essentially by linking small subexpanders within $G-L$. A subtlety here worth pointing out is that we only have $\abs{L}<d$, while each of the subexpanders, though having size $\Omega(d)$,  could be smaller than $L$. This keeps us from expanding and linking each subexpander in $G-L$. Intuitively, given that $L$ is not large, one would like to take a huge set of subexpanders, whose union is so large and thus grows easily past $L$. However, as the expanding function $\rho(\cdot)$ is sublinear, if there are too many subexpanders to begin with, after averaging, the expansion rate of each subexpander in $G-L$ is too weak to be useful. To overcome this difficulty, instead, we shall average over a set of subexpanders of appropriate size that is just big enough to ignore $L$ and on the other hand just small enough that $\rho(\cdot)$ does not decay too much.

\section{Separable bipartite graphs in dense graphs}\label{sec: dense}

In this section, we prove Lemma~\ref{lem: dense}. We will use Szemer\'edi's regularity lemma; for a detailed survey of this lemma and its numerous applications, see~\cite{ksssurvey,kssurvey}.
Let $d_G(A,B)=\frac{e(G[A,B])}{\abs A \abs B}$ be 
the \emph{density} of a bipartite graph $G$ with vertex classes $A$ and $B$. 
For a positive number $\eps > 0$, we say that a bipartite graph $G$ with vertex classes $A$ and $B$ is $\eps$\emph{-regular} if every $X \subseteq A$ and $Y \subseteq B$ with $\abs X\geq \eps \abs A$ 
and $\abs Y \geq \eps \abs B$ satisfy $\abs{d_G(A,B) - d_G(X,Y)} \leq \eps$.  
We say that it is $(\eps,\delta^+)$-regular if it is $\eps$-regular and $d_G(A,B)\geq \delta$.

The following is a version of the regularity lemma suitable for our purpose.

\begin{lemma} \label{lem: reg}
	Suppose $0< 1/n \ll  1/r_1 \ll \eps \ll \tau <1$ with $n\in \mathbb{N}$ and let $G$ be an $n$-vertex graph with $d(G)\geq d n$ for some $d\in(0,1)$. 
	Then there is a partition of the vertex set of $G$ into $V_1, \ldots , V_{r}$ and a graph $R$ on the vertex set $[r]$ such that the following holds:
	\begin{enumerate}[label=(\roman*)]
		\item $1/\eps \leq r \leq r_1$;
		\item for each $i\in [r]$, we have $\abs{V_i} = \frac{(1\pm \eps)n}{r}$;
		\item $d(R)\geq (d-2\tau)r$;
		\item for all $ij\in E(R)$, the graph $G[V_i,V_j]$ is $(\eps,\tau^+)$-regular.
	\end{enumerate}
\end{lemma}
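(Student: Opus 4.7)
The plan is to deduce this weighted reduced-graph formulation from the standard equitable Szemer\'edi regularity lemma together with an edge-counting argument. First, apply the classical regularity lemma with parameter $\eps':=\eps/10$ and a lower bound $\lceil 1/\eps'\rceil$ on the number of parts, obtaining an equitable partition $V_0,V_1,\ldots,V_r$ with $\abs{V_0}\le \eps' n$, $1/\eps' \le r \le r_1$, $\abs{V_i}\in\{\lfloor (n-\abs{V_0})/r\rfloor,\lceil (n-\abs{V_0})/r\rceil\}$ for $i\ge 1$, and with at most $\eps' r^2$ pairs $(V_i,V_j)$ failing to be $\eps'$-regular. Redistributing $V_0$ arbitrarily among $V_1,\ldots,V_r$ yields parts of size $(1\pm\eps)n/r$, and with $\eps'$ sufficiently small, each previously $\eps'$-regular pair remains $\eps$-regular after this perturbation and its density shifts by at most $\eps$. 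This gives conditions (i) and (ii) immediately.

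Next, define the reduced graph $R$ on $[r]$ by $ij\in E(R)$ exactly when $(V_i,V_j)$ is $(\eps,\tau^+)$-regular, so that (iv) is built into the definition. For (iii), the strategy is to lower-bound $e(R)$ by counting edges of $G$ in each possible category. Using $\abs{V_i}\le (1+\eps)n/r$, the total number of edges inside clusters is at most $(1+\eps)^2 n^2/(2r)$, the total in non-$\eps$-regular pairs is at most $\eps(1+\eps)^2 n^2/2$, and the total in $\eps$-regular pairs of density below $\tau$ is at most $\tau(1+\eps)^2 n^2/2$. Since $e(G)\ge dn^2/2$, the edges lying in pairs encoded by $E(R)$ satisfy
\[ e(R)\cdot\bigl((1+\eps)n/r\bigr)^2 \;\ge\; \tfrac{d}{2}n^2 \;-\; \tfrac{(1+\eps)^2 n^2}{2r} \;-\; \tfrac{\eps(1+\eps)^2 n^2}{2} \;-\; \tfrac{\tau(1+\eps)^2 n^2}{2}. \]
Multiplying through by $2r/((1+\eps)^2 n^2)$ and using $d(R)=2e(R)/r$ gives
\[ d(R) \;\ge\; \frac{dr}{(1+\eps)^2} \;-\; 1 \;-\; \eps r \;-\; \tau r. \]
Applying $(1+\eps)^{-2}\ge 1-3\eps$ and the hierarchy $1/r\le \eps\ll \tau$ (so that $3\eps dr + \eps r + 1 \le \tau r$, using $d<1$), this is at least $(d-2\tau)r$ as required.

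The argument is entirely routine bookkeeping; the only places that need genuine care are choosing $\eps'$ small enough relative to $\eps$ so that absorbing $V_0$ preserves $\eps$-regularity of the originally $\eps'$-regular pairs without dropping their densities below $\tau$, and checking that the loss terms $1,\eps r$, and the correction from $(1+\eps)^{-2}$ all fit inside the $\tau r$ slack promised by the hierarchy $\eps\ll\tau$.
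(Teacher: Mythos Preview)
The paper does not actually supply a proof of this lemma: it is stated as ``a version of the regularity lemma suitable for our purpose'' and referred to the standard surveys~\cite{ksssurvey,kssurvey}. Your derivation is the routine way one extracts such a reduced-graph statement from the equitable form of Szemer\'edi's regularity lemma, and the bookkeeping is correct; in particular, the chain $3\eps dr + \eps r + 1 \le 5\eps r \le \tau r$ (using $d<1$ and $r\ge 1/\eps$) closes the gap to $(d-2\tau)r$ as you claim. So your proposal is correct and is exactly the standard argument the paper is implicitly invoking.
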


The graph $R$ above is often called the \emph{reduced graph} of $G$ (with respect to the partition $V_1,\ldots, V_r$).
One of the reasons why the regularity lemma is useful is that it can be combined with the blow-up lemma of Koml\'os, S\'ark\"ozy and Szemer\'edi (see \cite[Remark 8]{KSSblowup}).
Here, we only need the following weaker version of the blow-up lemma which only yields a non-spanning subgraph in a given graph.
\begin{theorem}\label{blowup}\label{thm: blow up}
	Suppose $0< 1/n \ll \eps \ll \tau, 1/\Delta <1$ and $1/n \ll 1/r$ with $r, n\in \mathbb{N}$. Suppose $H$ is a graph with $\Delta(H)\leq \Delta$ having a vertex partition $X_1\cup \dots \cup X_r$, $G$ is a graph with the vertex partition $V_1\cup \dots \cup V_r$, and $R$ is a graph on the vertex set $[r]$ with $\Delta(R)\leq \Delta$.
	Suppose further that the followings hold.
	\begin{enumerate}[label=(\roman*)]
		\item\label{blow1} For each $i\in [r]$, $X_i$ is an independent set in $H$ and $\abs{X_i}\leq (1-\eps^{1/2}) \abs{V_i}$.
		\item For each $ij\in E(R)$, the graph $G[V_i,V_j]$ is $(\eps,\tau^+)$-regular.
		\item\label{blow3} For each $\{i,j\}\in \binom{[r]}{2}$ with $ij\notin E(R)$, the graph $H$ contains no edges from $X_i$ to $X_j$. 
	\end{enumerate}
	Then $G$ contains $H$ as a subgraph.
\end{theorem}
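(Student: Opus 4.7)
The plan is to embed $H$ into $G$ via a randomised greedy procedure that processes vertices of $H$ one at a time while maintaining a \emph{candidate set} $C(v)\subseteq V_{i_v}$ for each unembedded $v\in X_{i_v}$. Fix an arbitrary order $V(H)=\{v_1,\ldots,v_h\}$, initialise $C(v_j):=V_{i_j}$, and at step $t$ embed $v_t\in X_{i_t}$ to some $w_t\in V_{i_t}$ drawn from an \emph{admissible set} defined below; then for every unembedded neighbour $v'$ of $v_t$ (which by hypothesis~\ref{blow3} lies in some $X_{i'}$ with $i_ti'\in E(R)$) update $C(v')\leftarrow C(v')\cap N_G(w_t)$.

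Call $w\in V_{i_t}$ \emph{bad} at step $t$ if $|C(v')\cap N_G(w)|<(\tau-\eps)|C(v')|$ for some unembedded neighbour $v'$ of $v_t$. Since $v_t$ has at most $\Delta$ such neighbours, the $(\eps,\tau^+)$-regularity of each relevant pair gives $|B_t|\leq \Delta\eps|V_{i_t}|$, applicable so long as the inductive invariant $|C(v')|\geq \eps|V_{i'}|$ is maintained. Set $U_t:=\{w_1,\ldots,w_{t-1}\}\cap V_{i_t}$ and $A_t:=C(v_t)\setminus(B_t\cup U_t)$. Whenever $w_t\in A_t$, each $C(v)$ shrinks by a factor at least $\tau-\eps$ per already-embedded neighbour, and so satisfies $|C(v)|\geq (\tau-\eps)^{\Delta}|V_{i_v}|$ throughout; by the hierarchy $\eps\ll\tau,1/\Delta$ this is well above $\eps|V_{i_v}|$, justifying the regularity step recursively.

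The main obstacle is to ensure $A_t\neq\emptyset$ at every step, i.e.\ that $C(v_t)$ is not absorbed by $U_t\cup B_t$. The slack hypothesis~\ref{blow1} gives $|U_t|\leq |X_{i_t}|-1\leq(1-\eps^{1/2})|V_{i_t}|$, leaving $\eps^{1/2}|V_{i_t}|$ unused vertices; but a priori these could all lie outside $C(v_t)$, which itself only has size about $\tau^{\Delta}|V_{i_t}|$. To remedy this, I would pick $w_t$ uniformly at random from $C(v_t)\setminus B_t$ (non-empty since $|C(v_t)|\geq(\tau-\eps)^{\Delta}|V_{i_t}|\gg \Delta\eps|V_{i_t}|$) at each step. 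Heuristically, the earlier random choices populate $U_t$ roughly uniformly within $V_{i_t}$, so $\mathbb{E}|C(v_t)\cap U_t|\approx |C(v_t)|\cdot |U_t|/|V_{i_t}|$ and hence $\mathbb{E}|A_t|\gtrsim (\tau-\eps)^{\Delta}\eps^{1/2}|V_{i_t}|$, which is far larger than $1$ for $|V_{i_t}|$ large. The technically heaviest step will be upgrading this expectation to a high-probability statement via an Azuma--Hoeffding bound on the Doob martingale that reveals one random embedding at a time, following the blueprint of the Koml\'os--S\'ark\"ozy--Szemer\'edi proof of the blow-up lemma. A cleaner alternative route is to pass to balanced subsets $V_i'\subseteq V_i$ of size $|X_i|$, preserve regularity via the slice lemma whenever $|X_i|/|V_i|$ is not too small (handling tiny parts separately by deleting them outright), and then invoke the spanning blow-up lemma as a black box.
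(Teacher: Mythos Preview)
The paper does not give its own proof of this theorem; it is quoted as a (weaker, non-spanning) consequence of the Koml\'os--S\'ark\"ozy--Szemer\'edi blow-up lemma, with a pointer to \cite[Remark~8]{KSSblowup}. Your ``cleaner alternative route''---restrict each $V_i$ to a subset of the right size via the slicing lemma and then invoke the spanning blow-up lemma as a black box---is exactly this, and is the intended derivation. (The worry about ``tiny parts'' is easily handled by padding small $X_i$ with isolated dummy vertices until $|X_i|/|V_i|$ is bounded below, so regularity survives slicing.)

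Your primary approach, the randomised greedy embedding with martingale concentration, is a reasonable outline of how the blow-up lemma itself is proved, and you correctly identify the obstacle: with only $\eps^{1/2}|V_{i_t}|$ unused slots but candidate sets of size only $(\tau-\eps)^\Delta|V_{i_t}|$, the deterministic bound $|C(v_t)\setminus U_t|\ge |C(v_t)|-|U_t|$ is useless. One caution about your heuristic fix: the earlier images $w_s$ are \emph{not} close to uniform in $V_{i_s}$, since each is drawn from a candidate set $C(v_s)\setminus B_s$ that is itself a highly structured (iterated-neighbourhood) subset; so the assertion $\mathbb{E}|C(v_t)\cap U_t|\approx |C(v_t)|\cdot|U_t|/|V_{i_t}|$ needs real work, not just Azuma on the obvious martingale. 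The actual KSS argument tracks more than sizes (it maintains regularity of candidate sets with respect to future parts and uses a rather delicate queue/buffer mechanism), so while this route is ultimately correct, it is substantially heavier than the one-line citation the paper uses.
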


\subsection{Balanced homomorphic image of \texorpdfstring{$H$}{H}}
We now show how to find a suitable partition of $H$ to invoke the blow-up lemma. This will use the separability property; however, we find it more convenient to work with bandwidth.
A graph $H$ has \emph{bandwidth} $b$ if we can order its vertices $x_1,\dots, x_{\abs{H}}$ of $H$ such that $x_ix_j\notin E(H)$ for $\abs{i-j}>b$.
In general, small bandwidth is a stronger notion than small separability. However the following result of B\"ottcher, Pruessmann, Taraz and W\"urfl shows that for bounded-degree graphs the two notions are roughly equivalent.

\begin{lemma}\cite[Theorem 5]{BPTW}
	Suppose $0< 1/d \ll \alpha \ll \beta \ll 1/ \Delta\leq 1$. If $H$ is an $\alpha$-separable graph with at most $d$ vertices and $\Delta(H)\leq \Delta$, then $H$ has bandwidth at most $\beta d$.
\end{lemma}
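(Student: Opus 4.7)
The plan is to build a balanced hierarchical binary decomposition of $V(H)$ by iterated applications of the $\alpha$-separability hypothesis, and then to read off the ordering as an in-order traversal of the resulting decomposition tree.

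First, I would construct the decomposition tree recursively. Starting from the piece $V(H)$, given a current piece $P$ of size $p \geq \gamma d$ with $\gamma := \beta/(100\Delta)$, apply $\alpha$-separability to $H[P]$ to locate a separator $S_P \subseteq P$ with $\abs{S_P} \leq \alpha p$ such that every component of $H[P \setminus S_P]$ has size at most $\alpha p$. Greedily group these components into two families $\mathcal{A}_P, \mathcal{B}_P$ whose total sizes differ by at most $\alpha p$; this is possible because every single component already has size at most $\alpha p$. Then assign each $s \in S_P$ to whichever of the two sides contains a majority of its (at most $\Delta$) neighbours in $H$, ties broken arbitrarily, to obtain augmented children $A_P, B_P$ with $\max(\abs{A_P},\abs{B_P}) \leq (1/2 + 2\alpha)p$. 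Recurse on $A_P$ and $B_P$ until each leaf piece has size at most $\gamma d$. The resulting binary tree has depth $O(\log(1/\gamma))$.

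Second, I would produce the linear order on $V(H)$ by an in-order traversal of this tree, listing the vertices within each leaf piece in any order. The crucial structural property is that every subtree at depth $\ell$ holds a piece of size at most $(1/2 + 2\alpha)^{\ell} d$, and by the in-order rule these vertices occupy a contiguous window of that length in the final ordering. For an edge $uv$ of $H$, let $\ell^\star$ be the largest depth at which $u,v$ lie in a common piece $P^\star$. Since $A_{P^\star}$ and $B_{P^\star}$ lie in different components of $H[P^\star] \setminus S_{P^\star}$, at least one of $u,v$ must belong to $S_{P^\star}$. For leaf-level edges this already gives bandwidth at most $\gamma d \cdot \Delta \leq \beta d / 100$, and for interior edges the bandwidth is at most $\abs{P^\star} \leq (1/2 + 2\alpha)^{\ell^\star} d$.

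The main obstacle is controlling edges incident to separator vertices at shallow levels: a single $s \in S_{P^\star}$ assigned to side $A_{P^\star}$ may still have neighbours in $B_{P^\star}$, each contributing bandwidth up to $\abs{P^\star}$, which near the root is $\Theta(d)$. The resolution is a double-counting argument exploiting the majority-assignment rule and the bounded-degree condition: at each level, the number of cross edges, namely edges leaving a separator $s$ toward the non-assigned side, is at most $\Delta/2$ per vertex, hence at most $\tfrac{\Delta}{2} \sum_\ell \abs{S_\ell}$ in total; since separator sizes decay geometrically with depth, this sum telescopes to $O(\alpha d)$, and assigning each such cross edge to a suitably deep common ancestor using a standard charging scheme bounds every individual bandwidth by $\beta d$ whenever $\alpha \ll \beta \ll 1/\Delta$. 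The bounded degree, the geometric decay of $(1/2+2\alpha)^\ell$, and the choice of leaf threshold $\gamma d$ combine to absorb all contributions into the target bandwidth $\beta d$.
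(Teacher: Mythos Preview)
The paper does not prove this lemma; it is quoted as a black box from B\"ottcher--Pruessmann--Taraz--W\"urfl~\cite{BPTW}, so there is no in-paper argument to compare against. Your proposal therefore has to stand on its own, and it has two genuine gaps.

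First, the recursion is not justified. You write ``apply $\alpha$-separability to $H[P]$ to locate a separator $S_P\subseteq P$ with $\abs{S_P}\le\alpha p$''. The hypothesis, however, is only that $H$ itself is $\alpha$-separable: there is one separator $S$ with $\abs{S}\le\alpha\abs{H}$ whose removal leaves components of size at most $\alpha\abs{H}$. Nothing in the definition guarantees that an induced subgraph $H[P]$ is $\alpha$-separable with the parameter rescaled to $\abs{P}$; one-shot separability is not a hereditary property at all scales. (Using $S\cap P$ at every level gives separators and components of size $\le\alpha d$, not $\le\alpha p$, which changes the arithmetic of your depth bound, though this part is repairable.)

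Second, and more seriously, your handling of the ``main obstacle'' does not work. Take a separator vertex $s\in S_{V(H)}$ at the root that is assigned to $A_{V(H)}$ but has a neighbour $v\in B_{V(H)}$. In your ordering $s$ lies somewhere in the left contiguous block and $v$ somewhere in the right one; the distance $\abs{\sigma(s)-\sigma(v)}$ can be as large as $\Theta(d)$. You then claim that a ``charging scheme'' on the total number of such cross edges, which is indeed $O(\Delta\alpha d)$, bounds every individual bandwidth by $\beta d$. But bandwidth is the maximum over edges of a quantity determined solely by the positions of that edge's endpoints; a bound on how many long edges there are says nothing about how long any one of them is. The phrase ``assigning each such cross edge to a suitably deep common ancestor'' has no meaning here: the common ancestor of $s$ and $v$ is the root, full stop, and no reassignment changes $\sigma$. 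To make a separator-tree argument work one must control \emph{where} the separator vertices land in the ordering (e.g.\ place them at the interface rather than recursing on them), and even then further care is needed; the actual proof in~\cite{BPTW} routes through treewidth/pathwidth rather than attempting this directly.
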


We want to partition $H$ into almost equal sized sets $X_1\cup\dots\cup X_r$ for an appropriate $r$ where all edges of $H$ lie between two consecutive sets $X_{i}$ and $X_{i+1}$ where $X_{r+1}=X_1$. In other words, we want to find a `balanced' homomorphism from $H$ into $C_r$.

Later we will apply regularity lemma to $G$ and find a cycle $C$ in the reduced graph we obtain from the regularity lemma, and apply the blow-up lemma with the above partition of $H$ where $r=\abs{C}$. If $\abs{C}$ is odd, then the value $r$ is large enough for us to fit $H$ into $G[\bigcup_{i\in C} V_i]$. However, if $\abs{C}$ is even and $H$ is an unbalanced bipartite graph then our strategy does not work. For such a case, we need to consider a sun instead.

The following two lemmas provide partitions of $H$ suitable for our purpose. 
First one finds a `balanced' homomorphism of $H$ into an odd cycle $C_r$ and the second finds a `balanced' homomorphism of $H$ into a suitable sun. As two proofs are similar, we omit the proof of the first lemma here. It will be available in the appendix of the online version of this paper.

\begin{lemma}\label{lem: H partition odd}
	Suppose $0< 1/d \ll \beta \ll 1/r, \delta \ll  1/ \Delta <1$ and $r$ is an odd integer. If $H$ is a bipartite graph with at most $(1-\delta)d$ vertices and $\Delta(H)\leq \Delta$ and bandwidth at most $\beta d$, then we can find a partition $X_1\cup \dots \cup X_{r}$ such that the followings hold.
	\begin{itemize}
		\item For each $i\in [r]$, we have $\abs{X_i} \leq \frac{d}{r}$.
		\item Every edge of $H$ is between $X_{i}$ and $X_{i+1}$ for some $i\in [r]$ where $X_{r+1}=X_1$.
	\end{itemize}
\end{lemma}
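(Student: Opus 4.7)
The plan is to construct $X_1,\dots,X_r$ as the fibres of a graph homomorphism $\phi\colon H\to C_r$, built from the bandwidth ordering of $H$ together with its bipartition. Write $v_1,\dots,v_n$ for a bandwidth ordering of $V(H)$, with $n=\abs{V(H)}\le(1-\delta)d$ and $\abs{i-j}\le\beta d$ on every edge $v_iv_j$, and let $V(H)=A\cup B$ denote the bipartition. Because $\beta\ll 1/r$, the slot size $n/r$ is much larger than the bandwidth $\beta d$, which provides the flexibility we shall need. Partition $\{v_1,\dots,v_n\}$ into $r$ consecutive intervals $I_1,\dots,I_r$ of sizes between $\lfloor n/r\rfloor$ and $\lceil n/r\rceil$; since $\abs{I_j}\gg\beta d$, every edge of $H$ lies either inside some $I_j$ or between consecutive $I_j,I_{j+1}$.

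Next I assign each vertex of $I_j$ to one of two cyclically adjacent positions on $C_r$, the exact position being determined by the vertex's bipartition class together with the parity of~$j$. The alternation of the roles of $A$ and $B$ from one interval to the next is arranged so that within-interval $A$-to-$B$ edges, cross-interval $A$-to-$B$ edges, and cross-interval $B$-to-$A$ edges all land between cyclically adjacent positions of $C_r$. That this alternation closes up consistently after going once around the cycle requires $r$ to be odd: for even $r$ the scheme produces a parity obstruction at the wrap-around $I_r\to I_1$.

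The delicate point is the size bound. A careless implementation of the above gives $\abs{X_i}\le\abs{I_{\sigma_1(i)}\cap A}+\abs{I_{\sigma_2(i)}\cap B}\le 2n/r$ for suitable bijections $\sigma_1,\sigma_2$, which can exceed $d/r$ when $n$ is close to $d$ because the slack $\delta d/r$ is much smaller than $n/r$. The key observation is that to realise this worst case, the bipartition of $H$ must contain long ``all-$A$'' and ``all-$B$'' stretches along the bandwidth ordering; but then essentially all edges of $H$ are concentrated within bandwidth-$\beta d$ windows around the $A/B$ transitions, so the many vertices away from those windows are free of edge constraints and can be reassigned to rebalance the partition. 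Combining this analysis with the option of nudging each interval boundary within the bandwidth window yields $\abs{X_i}\le n/r+O(\beta d)\le(1-\delta)d/r+O(\beta d)\le d/r$, using $\beta\ll\delta/r$. The main obstacle is precisely this refined size accounting, whose bookkeeping --- a case split on ``balanced'' versus ``unbalanced'' intervals together with the rerouting near $A/B$ transitions --- is the technical work the authors defer to the appendix, noting its parallel to the sun-partition lemma that follows.
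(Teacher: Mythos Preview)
Your homomorphism construction via consecutive intervals and bipartition parity is natural, and it does give a valid map $H\to C_r$; but the size bound is where the argument breaks, and your proposed fix does not repair it.

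Your claimed mechanism for rebalancing is that whenever some $X_\ell$ is overfull the bipartition must have long monochromatic stretches in the bandwidth order, forcing many isolated vertices that can be moved freely. This is false. Take $n=(1-\delta)d$ with $\delta$ small (the hypothesis only requires $\delta\ll 1/\Delta$), and let $H$ be the graph on $v_1,\dots,v_n$ with $A=\{v_i:i\equiv 1\pmod 4\}$, $B=V(H)\setminus A$, and each $A$-vertex $v_p$ adjacent to $v_{p-1},v_{p+1},v_{p+2}$. This is bipartite with $\Delta(H)=3$ and bandwidth $2$, and \emph{no} vertex is isolated. Every interval $I_j$ has $|I_j\cap B|\approx\tfrac{3}{4}|I_j|$, so in your scheme the class $X_\ell$ that collects two consecutive $B$-halves has size about $\tfrac{3}{2}n/r>d/r$. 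Nudging interval boundaries within a bandwidth window changes nothing since the pattern is uniform, and there are no isolated vertices to reroute. Your asserted bound $|X_i|\le n/r+O(\beta d)$ therefore fails outright; the observation about ``long all-$A$ and all-$B$ stretches'' only covers the case where the bipartition is coarse relative to the bandwidth, which it need not be.

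The paper's proof takes a genuinely different route that sidesteps this difficulty. Rather than $r$ consecutive intervals, it chops the bandwidth order into $t\approx\beta^{-1/2}$ chunks (each with a short buffer and a long payload $Z_i^A\cup Z_i^B$) and assigns the indices $i\in[t]$ to classes $I_1,\dots,I_r$ \emph{uniformly at random}. The part $X_\ell$ receives $\bigcup_{i\in I_\ell}Z_i^A\cup\bigcup_{i\in I_{\ell-1}}Z_i^B$, together with a negligible contribution from the buffers used to navigate between chunks. By concentration (Azuma), with positive probability every $X_\ell$ has size $|A|/r+|B|/r+o(d/r)=n/r+o(d/r)\le d/r$. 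The point is that randomisation mixes the chunks so that each $X_\ell$ sees the \emph{global} $A{:}B$ ratio, not the local one inside two fixed intervals; this is exactly what your deterministic consecutive-interval scheme cannot do, and it is why the $ABBB\cdots$ example is harmless for the paper's argument but fatal for yours.
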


\begin{lemma}\label{lem: H partition even}
	Suppose $1/d \ll \beta \ll 1/r, \delta \ll  \eps, 1/ \Delta \leq 1$ and $R_0$ is an $(2s,q)$-sun with $s+q \geq r$ and $s\geq q$.
	Suppose that $H$ is a bipartite graph with bipartition $A\cup B$ having at most $(1-\delta)d$ vertices and $\Delta(G)\leq \Delta$.
	Then we can find a partition $\{ X_{u} : u\in V(R_0)\}$ of $V(H)$  as follows.
	\begin{itemize}
		\item For each $u\in V(R_0)$, we have $\abs{X_u}\leq \frac{d}{r}$
		\item Every edge of $H$ is between $X_u$ and   $X_v$ for some $uv\in E(R_0)$.
	\end{itemize}
\end{lemma}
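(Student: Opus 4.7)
The strategy closely parallels that of Lemma~\ref{lem: H partition odd}, adapted for the bipartite structure of the sun $R_0$. Since $R_0$ has no self-loops, each $X_u$ must be independent in $H$; as $H$ is bipartite, the cleanest way to enforce this is to match the bipartition $A\cup B$ of $H$ with the bipartition classes $U_1 := \{x_{2i-1}: i\in [s]\}\cup\{y_j: j\in [q]\}$ (of size $s+q$) and $U_2 := \{x_{2i}: i\in [s]\}$ (of size $s$) of $R_0$. Assume without loss of generality that $|A|\geq |B|$ and that each leaf $y_j$ is adjacent to $x_{2j}$, and aim to send $A$ into parts indexed by $U_1$ and $B$ into parts indexed by $U_2$.

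The $d/r$ budget per part is easily seen to be sufficient: $|A|\leq (1-\delta)d$ spread across $s+q\geq r$ parts has average at most $(1-\delta)d/r$, and $|B|\leq (1-\delta)d/2$ spread across $s$ parts (where $s\geq (s+q)/2\geq r/2$, using $s\geq q$ and $s+q\geq r$) also has average at most $(1-\delta)d/r$. Both leave a slack of order $\delta d/r$, which is intended to absorb the $O(\beta d)$ boundary fluctuations coming from bandwidth-based chunking, since $\beta\ll \delta/r$.

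For the construction, I would fix a bandwidth ordering $v_1,\ldots,v_n$ of $V(H)$ (so that $v_iv_j\in E(H)$ implies $|i-j|\leq \beta d$) and a cyclic walk $W = u_1,\ldots,u_T$ on $R_0$ that traverses the cycle $x_1,\ldots,x_{2s}$ and, at each $x_{2j}$ with $j\in[q]$, inserts the detour $x_{2j},y_j,x_{2j}$; this $W$ has length $T=2(s+q)$, alternates between $U_1$ and $U_2$, and visits each $U_1$-vertex exactly once, each $x_{2j}$ with $j\in[q]$ exactly twice, and every other $U_2$-vertex exactly once. Partition $[n]$ into $T$ consecutive chunks $I_1,\ldots,I_T$ of size about $n/T$ and set $\phi(v_i) := u_t$ for $v_i\in I_t$ whenever the bipartition class of $v_i$ matches that of $u_t$; otherwise, reroute $v_i$ to $u_{t-1}$ or $u_{t+1}$ (both of which lie in the opposite class of $R_0$), choosing the side that balances loads. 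The bandwidth bound $|i-j|\leq \beta d\ll n/T$ forces every $H$-edge to span at most adjacent chunks, so endpoint-assignments under $\phi$ land on equal or adjacent walk positions, hence on an edge of $R_0$.

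The main obstacle I expect is bounding $|X_{x_{2j}}|$ for $j\in[q]$: such a vertex is visited twice in $W$ and additionally absorbs the $B$-content of $y_j$'s chunk (which can only reroute to $x_{2j}$, since $y_j$'s unique $U_2$-neighbour is $x_{2j}$), plus a share from two further $U_1$-chunks; the naive estimate overshoots $d/r$ by a constant factor. The fix is to exploit the constraint $s\geq q$ (making the ``heavy'' $x_{2j}$ a minority of $U_2$) together with the $\delta d/r$ slack: concretely, one shrinks the $y_j$-chunks (whose only purpose is to supply $A$-vertices to the leaves, so shrinking costs nothing on the $B$-side) and rebalances the rerouting ratios around these positions so that no $|X_{x_{2j}}|$ exceeds $d/r$. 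This mirrors the analogous careful bookkeeping in the odd-cycle case of Lemma~\ref{lem: H partition odd}.
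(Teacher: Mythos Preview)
Your walk-based chunking scheme has a genuine gap that the proposed fix (shrinking the $y_j$-chunks) does not close. The problem is not merely the multiplicity of $x_{2j}$ in the walk; it is that a fixed bandwidth ordering may concentrate the $B$-vertices into a short stretch of the ordering, so that only a few of the $s$ vertices in $U_2$ are ``reachable'' from that stretch by your local rerouting rule. Concretely, take $s=q=r/2$ (so every $x_{2i}$ has a leaf and $T=2r$) and an $H$ with $|A|=|B|=n/2$ and very few edges, ordered so that the first $n/2$ positions are all in $A$ and the last $n/2$ all in $B$ (this has bandwidth $O(1)$). Then the last $r$ chunks, carrying all of $B$, land on walk positions visiting only the $r/4$ cycle vertices $x_{r/2+2},x_{r/2+4},\ldots,x_r$ of $U_2$; no amount of left/right rerouting can spread $|B|=n/2$ over $r/4$ parts within budget $d/r$, since the average load is already $2(1-\delta)d/r$. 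Shrinking the $y_j$-chunks to zero only makes the remaining $U_2$-chunks larger without creating new $U_2$-targets. Your averaging heuristic ($|B|\leq (1-\delta)d/2$ over $s\geq r/2$ parts) is correct globally but irrelevant locally, because your assignment couples the target to the \emph{position} in the bandwidth ordering.

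The paper decouples these via randomisation. It cuts the ordering into $t\approx\beta^{-1/2}$ coarse blocks, each with a short ``transition'' prefix $W_i$ and a long body $Z_i$, and then assigns each block index $i\in[t]$ \emph{uniformly at random} to one of the $s+q$ targets $I_1,\ldots,I_s,J_1,\ldots,J_q$ (if $i\in I_\ell$ then $Z_i^A\to X_{u_{2\ell-1}}$ and $Z_i^B\to X_{u_{2\ell}}$; if $i\in J_{\ell'}$ then $Z_i^A\to X_{v_{p_{\ell'}}}$ and $Z_i^B\to X_{u_{p_{\ell'}}}$). Concentration then gives, for the doubly-loaded leaf-neighbours $u_{p_{\ell'}}$, the bound $2|B|/(s+q)+o(d/r)\leq 2\gamma(1-\delta)d/((1+\gamma)r)+o(d/r)\leq d/r$, using $2\gamma/(1+\gamma)\leq 1$ for $\gamma:=|B|/|A|\leq 1$. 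The short prefixes $W_i$ (of total size $O(r\sqrt{\beta}d)\ll\delta d/r$) are used purely to wind around the cycle from one block's target to the next, restoring the homomorphism property. Incidentally, the odd-cycle lemma you invoke as a model is proved the same way (random assignment, not deterministic bookkeeping), so the analogy does not support your scheme.
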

\begin{proof}
	As $H$ has bandwidth at most $\beta d$, there exists an ordering $x_1,\dots, x_{\abs{H}}$ of $V(H)$ such that $x_ix_j\in E(H)$ implies $\abs{i-j}\leq \beta d$.
	Let $A\cup B$ be a bipartition of $H$ such that $\abs{B}= \gamma \abs{A}\leq d$ for some $\frac{1}{\Delta+1}\leq \gamma\leq  1$. Let $u_1\dots u_{2s}$ be the cycle in the sun $R_0$ and assume $u_{p_1},\dots, u_{p_{q}}$ be the vertices which has a leaf-neighbour  such that $p_1,\dots, p_q$ are all even. We consider all indices in sun (respectively in partition of $H$) up to modulo $2s$ (respectively $r$), i.e.~$u_{a+2s}=u_{a}$ and $X_{a+r}=X_r$ for all $a\in \mathbb{N}$. Let $v_{p_i}$ be the leaf neighbour of $u_{p_i}$ for each $i\in [q]$.

	Let $t= \lceil \beta^{-1/2}\rceil$  and we divide the vertices of $H$ according to the ordering as follows: for each $i \in [t^2]$ and $C\in \{A,B\}$, let 
	\[Y^C_i = \{ x_j \in C : (i-1)\beta d < j \leq i\beta d\} \quad \text{and} \quad  Y_i=Y_i^A\cup Y_i^B.\]
	Note that this guarantees that no edge of $H$ is between $Y_i$ and $Y_{j}$ with $\abs{i-j}>1$. For each $i\in [t]$ and $C\in \{A,B\}$, let 
	\[W_i^C = \bigcup_{j\in [r]} Y_{(i-1)t+j}^C \quad \text{and}  \quad Z_i^C = \bigcup_{j \in [t]\setminus[r]} Y_{(i-1)t+j}^C.\]
	
	In the claim below, we will decide to which part $X_\ell$ we assign the vertices in $Z_i^C$. To make such an assignment possible while keeping the edges only between two consecutive parts, we allow the vertices in $W_i^C$ to be assigned to some other parts. As each set $W_i^C$ is much smaller than $Z_i^C$, the uncontrolled assignments of $W_i^C$	will not harm us too much. 
Indeed, the set  $W=\bigcup_{i\in [t]}(W_i^A\cup W_i^B)$ has size at most $r \beta^{1/2} d < \delta^2 d/r$.

Now, we will decide to which part $X_u$ we will assign $Z_i^C$.
For this, we partition the set $[t]$ into $I_1,\dots, I_s, J_1,\dots, J_q$ as in the following claim. If $i\in I_{\ell}$, then we will later assign the vertices in $Z_i^A$ to $X_{u_{2\ell-1}}$ and the vertices in $Z_{i}^B$ to $X_{u_{2\ell}}$. 
If $i\in J_{\ell'}$, then we will later assign the vertices in $Z_i^A$ to $X_{v_{p_{\ell'}}}$ and 
$Z_i^B$ to $X_{u_{p_{\ell'}}}$. As we do not know how unbalanced two sets $Z_i^A$ and $Z_i^B$ are for each $i$, we prove the following claim using random assignments.

	\begin{claim}
		There exists a partition $I_1,\dots, I_s, J_1,\dots, J_q$ of $[t]$ satisfying the following.
		\begin{itemize}
			\item For each $\ell \in [s]$ and $C\in \{A,B\}$, we have $\abs{\bigcup_{i\in I_{\ell}} Z_i^C} \leq \frac{d}{r} $.
			\item For each $\ell \in[s], \ell'\in [q]$, we have 
			$\bigabs{\bigcup_{i\in J_{\ell}} Z_i^A} \leq \frac{d}{r}$ and $\bigabs{\bigcup_{i\in J_{\ell'}} Z_i^B\cup \bigcup_{i\in I_{p_{\ell'}}} Z_i^B}\leq \frac{d}{r}$.
		\end{itemize}	
	\end{claim}
	\begin{poc}
		We add each $\ell\in [t]$ independently to one of $I_1,\dots, I_s, J_1,\dots, J_q$ uniformly at random. Note that for each set, $\ell$ is in the set with probability $1/(s+q) \leq 1/r$.
		Standard concentration inequalities (e.g.\ Azuma's inequality) easily show that with positive probability, we can ensure that for each 
		$\ell \in [s]$ and $\ell'\in [q]$ we have
		\begin{align*}
		\Bigabs{\bigcup_{i\in I_{\ell}} Z_i^A}, \Bigabs{\bigcup_{i\in I_{\ell}} Z_i^B}, \Bigabs{\bigcup_{i\in J_{\ell'}} Z_i^A} &\leq \frac{\abs{A}}{r} + \beta^{1/5}d \leq \frac{(1-\delta)d}{(1+\gamma)r}+ \beta^{1/5}d \leq  \frac{(1-\delta^2)d}{r},	\\
		\Bigabs{\bigcup_{i\in I_{p_{\ell'}}} Z_i^B \cup \bigcup_{i\in J_{\ell'}} Z_i^B} &\leq \frac{2\abs{B}}{r} + \beta^{1/5}d \leq \frac{2\gamma (1-\delta) d}{(1+\gamma)r} + \beta^{1/5}d  \leq  \frac{(1-\delta^2) d}{r}.
		\end{align*}
		Here, we used $\beta^{1/5} < \delta^2/(10 r)$ and $2\gamma/(1+\gamma) \leq 1$ as $\gamma \leq 1$.
	\end{poc}
	
	With these sets $I_1,\dots, I_s, J_1,\dots, J_q$, we can distribute the vertices as desired. 
	Assume we already distributed vertices in $\bigcup_{i\in [kt]} Y_i$ to $X_1,\dots, X_{2s}$ and $X'_{p_{1}},\dots, X'_{p_q}$ in such a way that every vertex in $Y_{kt}^{B}$ is in $X_{2\ell'}$ for some $\ell'\in [s]$. If $k=0$, then we assume $\ell'=0$.
	If $k+1 \in I_{\ell}$, choose $\ell^*\in [s]$ such that $\ell'+\ell^* \in \{\ell,\ell+s\}$.
	We put vertices in $Y_{kt+1}^{A}, Y_{kt+1}^{B}, Y_{kt+2}^{A},\dots, Y_{kt +\ell^*}^{B}$ to $X_{2\ell'+1},X_{2\ell'+2},\dots, X_{2\ell-1}, X_{2\ell}$, respectively and we allocate the remaining vertices in $W^A_{k+1}\cup Z^A_{k+1}$ to $X_{2\ell-1}$ and the remaining vertices in $W^B_{k+1}\cup Z^B_{k+1}$ to $X_{2\ell}$.
	
	If $k+1 \in J_{\ell}$ for some $\ell\in [q]$, choose $\ell^*\in [s]$ such that $\ell'+\ell^* = p_{\ell}$. We allocate vertices in $Y_{kt+1}^{A}, Y_{kt+1}^{B}, Y_{kt+2}^{A},\dots, Y_{kt +\ell^*}^{B}$ to $X_{2\ell'+1},X_{2\ell'+2},\dots, X_{p_{\ell}-1}, X_{p_{\ell}}$ respectively and we allocate the remaining vertices in $W^A_{k+1}\cup Z^A_{k+1}$ to $X'_{p_{\ell}}$ and the remaining vertices in $W^B_{k+1}\cup Z^B_{k+1}$ to $X_{p_{\ell}}$.
	
	By repeating this for $k=0,\dots, t$, we distribute all vertices. For each $i\in [s]$ and $j\in [q]$, let $X_{u_i}=X_i$ and $X_{v_{p_j}}= X'_{p_j}$. 
	Then, by the bandwidth condition, all edges of $H$ are between $Y_i$ and $Y_{i+1}$, so we know that each edge of $H$ is between $X_u$ and $X_v$ for some $uv\in E(R_0)$. 
	As $\abs{W}<\delta^2 d/r$, we have
	for each $u=u_{2\ell-1},   v=v_{p_{\ell'}} \in V(R_0)$, the above distribution ensures that 
	\[\abs{X_{u}} \leq \Bigabs{\bigcup_{i\in I_\ell} Z_i^A} +\abs{W} \leq \frac{d}{r} \enspace \text{and} \enspace
	\abs{X_{v}} \leq \Bigabs{\bigcup_{i\in J_{\ell'}} Z_i^A} +\abs{W} \leq \frac{d}{r}.\]
	Again, as $\abs{W}<\delta^2 d/r$, for each $u=u_{2\ell}, u'=u_{p_{\ell'}} \in V(R_0)$ where $2\ell \notin \{p_1,\dots, p_{q}\}$, we have
	\[
	\abs{X_{u}} \leq \Bigabs{\bigcup_{i\in I_\ell} Z_i^B} +\abs{W} \leq  \frac{d}{r} , \enspace \text{and} \enspace
	\abs{X_{u'}} \leq \Bigabs{\bigcup_{i\in I_{p_{\ell'}}} Z_i^B \cup \bigcup_{i\in J_{\ell'} } Z_i^B } +\abs{W} \leq \frac{d}{r}.
	\]
	This proves the lemma.
\end{proof}

\subsection{Proof of Lemma~\ref{lem: dense}}
The last ingredient for the dense case is the following result of Voss and Zuluaga providing a long cycle in $2$-connected graphs.
\begin{lemma}\cite{VZ}\label{lem: long cycle}
	Suppose that a graph $R$ is a $2$-connected graph on at least $2d$ vertices with $\delta(G)\geq d$.
	If $R$ is a bipartite graph with vertex partition $A\cup B$, then $R$ has an even cycle of length at least $\min\{2\abs{A},2\abs{B},4d-4\}$.
	If $R$ is not a bipartite graph, then it has an odd cycle with length at least $\min\{\abs{R}-1, 2d-1\}$.
\end{lemma}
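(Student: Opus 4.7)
The plan is to prove both statements via a longest-cycle analysis: take a longest cycle $C$ in $R$ and use $2$-connectivity to produce extensions outside $C$, then exploit $\delta(R) \geq d$ together with maximality of $C$ to force $C$ long.

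For the bipartite case, $C$ is automatically even. Suppose for contradiction that $\abs{C} < \min\{2\abs{A}, 2\abs{B}, 4d-4\}$. Since $\abs{V(C)\cap A} = \abs{V(C)\cap B} = \abs{C}/2$ and $\abs{C} < 2\abs{A}$, there exists a vertex $v \in A \setminus V(C)$. Applying $2$-connectivity to $v$ produces two internally disjoint $v$--$C$ paths $P_1, P_2$ with endpoints $u_1, u_2 \in B \cap V(C)$ (parity forced by bipartiteness), splitting $C$ into two arcs of even lengths $\ell_1, \ell_2$ with $\ell_1 + \ell_2 = \abs{C}$. The $u_1,u_2$-path $Q$ obtained by concatenating $P_1$ and $P_2$ can replace either arc of $C$ to produce a new cycle, so maximality of $C$ yields $\abs{Q} \leq \min(\ell_1, \ell_2) \leq \abs{C}/2$. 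To extract the bound $4d-4$, I then invoke the degree hypothesis: $v$ has at least $d$ neighbours, all in $B$. For any pair $w, w'$ of neighbours of $v$ lying on $C$, the detour $w\,v\,w'$ substituted for an arc between $w, w'$ gives a further cycle, and maximality forces the gaps between consecutive neighbours of $v$ on $C$ to be long. Distributing the $\geq d$ neighbours of $v$ across the two arcs determined by $u_1, u_2$ and summing the gap bounds ultimately forces $\abs{C} \geq 4d-4$, the desired contradiction.

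For the non-bipartite case, I will first establish the standard Dirac-type bound that a $2$-connected graph with minimum degree at least $d$ contains a cycle of length at least $\min\{\abs{R}, 2d\}$, provable by the same longest-path extension argument without parity restrictions. If this longest cycle is odd, we are done. Otherwise it is even; since $R$ is non-bipartite there must exist an \emph{odd ear}, namely a path $Q$ in $R$ joining two vertices $x, y \in V(C)$ whose length has opposite parity to one of the $x$--$y$ arcs of $C$, for otherwise every cycle of $R$ would be even. Replacing that arc by $Q$ produces an odd cycle that drops at most one vertex of $C$, yielding length at least $\min\{\abs{R}-1, 2d-1\}$.

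The delicate part will be the sharp constant $4d-4$ in the bipartite case. The bare longest-cycle plus $2$-connectivity argument only gives the weaker $\abs{C} \geq 2d$-type bound; squeezing out the factor of two requires simultaneously exploiting the two arcs determined by the extension paths $P_1, P_2$ and the $\geq d$ neighbours of $v$ spread along $C$, in the spirit of Jackson's bipartite circumference theorem. Carrying out this bookkeeping carefully (including handling neighbours of $v$ that lie off $C$, which must feed back into the component of $R - V(C)$ containing $v$ and be rerouted via $P_1$ or $P_2$) is where I expect the bulk of the technical work.
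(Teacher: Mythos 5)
The paper does not prove this lemma: it cites the 1977 paper of Voss and Zuluaga \cite{VZ}, so there is no in-paper argument to compare against. Judged on its own merits, your proposal has genuine gaps in both cases.

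In the non-bipartite case, the claim that substituting an ``odd ear'' $Q$ for an arc of $C$ ``drops at most one vertex of $C$'' is unjustified and is in fact false in general. If $Q$ is a chord of the longest even cycle $C$ and the two arcs it determines have even lengths $\ell$ and $\abs{C}-\ell$, the resulting odd cycle has length $\abs{C}-\min(\ell,\abs{C}-\ell)+1$, which is at least $\abs{C}-1$ only when the shorter arc has length exactly two. Concretely, let $d$ be even and let $R$ be $K_{d,d}$ with parts $\{a_1,\dots,a_d\}$, $\{b_1,\dots,b_d\}$, together with one extra edge $a_1a_{d/2+1}$. Then $R$ is $2$-connected, non-bipartite, has $\delta(R)=d=\abs{R}/2$, and is Hamiltonian by Dirac. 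For the Hamiltonian cycle $C=a_1b_1a_2b_2\cdots a_db_da_1$, every $K_{d,d}$-chord of $C$ subtends an odd arc and hence yields only even cycles, so the unique odd ear of this $C$ is the chord $a_1a_{d/2+1}$, whose two arcs both have length $d$. Your substitution then produces an odd cycle of length $d+1$, far short of the required $2d-1$. An odd cycle of length $2d-1$ does exist in $R$, but reaching it requires choosing the even cycle adapted to the odd chord, and nothing in your argument does this. So the final step of the non-bipartite case fails and needs a genuinely different idea.

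The bipartite case is also unfinished. You cannot assume that the two internally disjoint $v$--$C$ paths from the fan lemma end in $B\cap V(C)$; the parities of those path lengths are not under your control, so the endpoints may well lie in $A$. More importantly, you state yourself that the bookkeeping needed to extract the constant $4d-4$ rather than a weaker $2d$-type bound ``is where I expect the bulk of the technical work.'' As written this is an outline of an intended proof rather than a proof; the counting over the two arcs and the rerouting of off-cycle neighbours, which is precisely the hard part of Jackson's bipartite circumference theorem, is missing.
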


\begin{proof}[Proof of Lemma~\ref{lem: dense}]
	Let $d':=d/n>\eta$.
	We can choose numbers $r_1\in \mathbb{N}$ and $ \beta, \eps', \tau>0$ such that 
	\[0< 1/d \ll \alpha \ll \beta \ll 1/r_1 \ll \eps' \ll \tau \ll \eta, \delta, \eps < 1.\]
	
	We apply Lemma~\ref{lem: reg} to $G$ with $\eps',\tau,d'$ playing the roles of $\eps,\tau,d$ to obtain a partition $V_1\cup \dots \cup V_r$ of $V(G)$  and a corresponding reduced graph $R$ with $V(R)=[r]$, $1/\eps' \leq r \leq r_1$, such that $|V_i| = \frac{(1\pm\eps') n}{r}$ for each $i\in[r]$, $G[V_i,V_j]$ is $(\eps', \tau^+)$-regular for each $ij \in E(R)$,
	and $d(R) \geq (d' -2\tau)r$.

	As $d(R) \geq (d'-2\tau)r$, we can find a $2$-connected subgraph $R'$ of $R$ with	\[d(R')\geq  (d'-3\tau)r \enspace \text{ and } \enspace \delta(R') \geq \frac{1}{2}(d'-3\tau)r.\]
	One easy way to see such a graph $R'$ exists is to apply Lemma~\ref{lem-expander} to $R$ to obtain $R'$ with $\tau, 1/4$ playing the roles of $\eps_1, \eps_2$.
	
	Let $r_0:= (d'-4\tau)r$.	Now we will find a graph $R_0$ in $R'$ which is either an odd cycle of length at least $r_0$ or an $(2a,b)$-sun with $a+b\geq r_0$.
 This will provides a structure in $G$ suitable for us to use the blow-up lemma.
	
 If $R'$ is not a bipartite graph, we let $R_0$ be an odd cycle of length at least $\min\{|R'|-1,\allowbreak 2\delta(R')-1\}\ge r_0$ in $R'$ guaranteed by Lemma~\ref{lem: long cycle}.
	
	If $R'$ is bipartite with vertex bipartition $A\cup B$ and $\abs{A}\leq \abs{B}$, then we let $R_0$ be a $(2a,b)$-sun in $R'$ for some $a,b$ with $a+b\geq r_0$. We claim that such a sun exists. 	If $\abs{A}\geq r_0$, then Lemma~\ref{lem: long cycle} yields an even cycle $C$ of length at least $\min\{2\abs{A}, 4 \delta(R')-4\}\ge 2r_0$ in $R'$. This is a $(2r_0,0)$-sun as claimed.
	 
	If $\abs{A}< r_0$, then let  $p = r_0 -\abs{A}$. 
	As $\abs{A}\le \abs{B}$, we have
	\[(d'-3\tau)r\leq d(R') = \frac{2e(R')}{\abs{R'}}\le \frac{2\abs{A}\abs{B}}{\abs{A}+\abs{B}} \leq \min\{2\abs{A},\abs{B}\}.\]
	Thus, we have $p\le \frac{r_0}{2} =\frac{1}{2}(d'-4\tau)r$ and  $r_0\le (d'-3\tau)r\leq \abs{B}$.
	Now, as $\abs{A}=r_0-p$, we use Lemma~\ref{lem: long cycle} to find an even cycle $C$ of length at least $\min\{2\abs{A}, 4 \delta(R')-4\} \geq 2\abs{A} $ in $R'$, which contains all vertices in $A$. Then $\abs{B\setminus V(C)}=\abs{B}-\abs{A} > p$. As each vertex in $B\setminus V(C)$ has at least $\delta(R')\geq \frac{1}{2}(d'-3\tau)r > p$ neighbours in $A$, we can find a matching of size at least $\min\{\abs{B\setminus V(C)},p\} \geq p$ in $R'[A,B\setminus V(C)]$.
	This matching together with the cycle $C$ forms a $(2\abs{A},p)$-sun with $\abs{A}+p = r_0$ as claimed.

	Now, using Lemma~\ref{lem: H partition odd} or Lemma~\ref{lem: H partition even} with $\eps', R_0, (1-\eps/2)d$ playing the roles of $\delta, R_0, d$, respectively, we can partition $V(H)$ into $\{X_u: u\in R_0\}$ where for each $i\in [p]$ we have 
	\[\abs{X_i} \leq \frac{(1-\eps/2) d}{r_0} \leq  (1-\eps/3)\frac{n}{r} \leq (1-\eps')\abs{V_i},\]
	and every edge of $H$ is between $X_u$ and $X_{v}$ for some $uv\in E(R_{0})$. Hence, \ref{blow1}--\ref{blow3} in Theorem~\ref{thm: blow up} are all satisfied with $\eps',\tau, R_0$ playing the roles of $\eps,\tau, R_0$, respectively, to conclude that $G$ contains $H$ as a subgraph. This proves the lemma.
\end{proof}

\section{Subdivisions in robust expanders with medium density}\label{sec: medium}

In this section, our goal is to prove Lemma~\ref{lem: moderate}, which finds an $H$-subdivision in a robust expander with medium density. 

We first prove the following lemma, which bounds the number of high degree vertices in our expanders.

\begin{lemma}\label{lem: L small}
	Let $0<1/d\ll \eps_1,\eps_2\ll \eps, 1/\Delta<1$,  and $H$ be a graph with at most $(1-\eps)d$ vertices with $\Delta(H)\leq \Delta$ and $G$ is an $n$-vertex graph and 	\begin{equation*}\label{eq-setup}
		m:= \frac{2}{\eps_1}\log^3\bfrac{15n}{\eps_2d}, \quad \enspace 
		\text{ and } \quad \enspace L:= \{ v\in V(G) : d_G(v) \geq dm^{10} \}.
	\end{equation*} Suppose $G$ is an $(\eps_1,\eps_2d)$-robust-expander. If $\abs{L}\geq d$, then $G$ contains $H$ as a subdivision.
\end{lemma}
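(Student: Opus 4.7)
The plan is to use the abundance of high-degree vertices in $L$ as the branch vertices of the desired $H$-subdivision, and to embed the edges of $H$ one at a time as internally disjoint paths by invoking the small-diameter property of robust expanders (Lemma~\ref{lem: path}). Since $|L|\ge d\ge |V(H)|$, the first step would be to fix an arbitrary injection $\phi\colon V(H)\to L$ designating the anchor vertex of each $v\in V(H)$. By the definition of $L$, each anchor $\phi(v)$ has degree at least $dm^{10}$ in $G$.

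Having set up the anchors, I would enumerate $E(H)=\{e_1,\dots,e_{e(H)}\}$ and construct the paths $P_1,\dots,P_{e(H)}$ of the subdivision greedily. At step $i$, writing $e_i=u_iv_i$ and
\[U_i:=\phi(V(H))\cup \bigcup_{j<i}\Int(P_j),\]
I would apply Lemma~\ref{lem: path} with $X_1:=N_G(\phi(u_i))\setminus U_i$, $X_2:=N_G(\phi(v_i))\setminus U_i$ and forbidden set $W:=U_i$, obtaining a path $P_i'$ of length at most $m$ in $G-U_i$ between $X_1$ and $X_2$. The desired path $P_i:=\phi(u_i)\,P_i'\,\phi(v_i)$ then joins the two anchors, and because $V(P_i')\subseteq V(G)\setminus U_i$, its internal vertices avoid all other anchors and all internal vertices of previously built paths. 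After processing every edge, the union $\bigcup_i P_i$ together with $\phi(V(H))$ forms the required $H$-subdivision.

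The main step to verify is that the hypotheses of Lemma~\ref{lem: path} hold at every iteration, and I expect this to be the only place that needs work. Each previous path contributes at most $m-1$ internal vertices and $e(H)\le \Delta d/2$, so $|U_i|\le |V(H)|+\tfrac{1}{2}\Delta d(m-1)\le \Delta dm$. This immediately gives $|X_1|,|X_2|\ge dm^{10}-|U_i|\ge dm^{10}/2\ge \eps_2 d/2$, which is well above the required threshold. For the bound on $W$, taking $x:=dm^{10}/2$ one has $\rho(x)\cdot x=\Omega(dm^{10}/\log^2 m)$, so the admissible budget $\rho(x)x/4$ comfortably dominates $|W|\le \Delta dm$ once $m^9/\log^2 m$ is sufficiently large. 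This last condition follows from the hypotheses: $|L|\ge d$ with $d_G(\phi(v))\ge dm^{10}$ forces $n\ge dm^{10}$, and substituting this into the definition of $m$ shows that $m$ is large as soon as $\eps_1$ is small.
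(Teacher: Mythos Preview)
Your proposal is correct and follows essentially the same approach as the paper: fix anchors in $L$, then greedily link the neighbourhoods of pairs of anchors via Lemma~\ref{lem: path}, avoiding the anchors and the interiors of earlier paths. The paper applies Lemma~\ref{lem: path} with the full neighbourhoods $N(v_i)$ as the two sets and $W_\ell\cup Z$ as the forbidden set, whereas you pre-delete $U_i$ from the neighbourhoods; this is an immaterial difference. Two minor remarks: a path $P_i'$ of length at most $m$ yields up to $m+1$ (not $m-1$) internal vertices in $P_i$, but this does not affect your bound $|U_i|\le \Delta dm$; and the fact that $m$ is large follows more directly from the hierarchy $1/d\ll\eps_1,\eps_2$ via $m\ge \tfrac{2}{\eps_1}\log^3(15/\eps_2)$, without needing the detour through $n\ge dm^{10}$.
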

\begin{proof}
	Let $V(H) = \{x_1,\dots, x_{h}\}$ with $h\leq (1-\eps)d$ and $x_{a_1}x_{b_1},\dots, x_{a_{h'}}x_{b_{h'}}$ be an arbitrary enumeration of $E(H)$ with $h' = e(H) \leq \Delta (1-\eps)d$.
	
	Take  a set  $Z=\{v_1,\dots, v_{h}\}$ of $h$ distinct vertices in $L$. Then, for each $i\in [h]$, the set $X_i:= N(v_i)$ has size at least $d m^{10}$.
	Assume that we have pairwise internally disjoint paths $P_{1},\dots, P_{\ell}$ with $0\leq \ell< h'$ where $P_{j}$ is a path between $v_{a_j}$ and $v_{b_j}$ of length at most $2m$.
	
	Let $W_\ell = \bigcup_{j\in [\ell]} \Int(P_j)$ be the union of the interior vertices of the paths. As $\abs{W_{\ell}}+\abs{Z}\leq h' \cdot 2m+h \leq 4\Delta d m \le \rho(d m^{10})\cdot dm^{10}/4$,  we can apply Lemma~\ref{lem: path} to get an $X_{a_{\ell+1}},X_{b_{\ell+1}}$-path $P$ avoiding  $W_{\ell}\cup Z$ of length at most $m$. Extending $P$, we obtain a $v_{a_{\ell+1}},v_{b_{\ell+1}}$-path $P_{\ell+1}$ of length at most $m+2\le 2m$. By repeating this for $\ell=0,1,\dots, h'-1$ in order, we obtain $\bigcup_{j\in[h']}P_j$, which is an $H$-subdivision,  in $G$.	
\end{proof}

\begin{proof}[Proof of Lemma~\ref{lem: moderate}]
As $\frac{d}{n} \le \eta\ll \eps_1,\eps_2$, we have that $m^{100} = \bigl(\frac{2}{\eps_1} \log^{3}\bfrac{15n}{\eps_2d}\bigr)^{100} < \frac{n}{d}$. Hence
\begin{equation}\label{eq: dm100}
	d m^{100} < n.
\end{equation} 
To derive a contradiction, we assume that $G$ does not contain an $H$-subdivision.
Then by Lemma~\ref{lem: L small}, we have 
\begin{equation}\label{eq: L small}
	\abs{L}< d.
\end{equation}

For a contradiction, we will find an $H$-subdivision in $G-L$. For this purpose, we need not only that $L$ is small, but also that the graph $G-L$ is still relatively dense to contain an $H$-subdivision. We claim that
\begin{equation}\label{eq: G-L average degree}
	d(G-L) \geq \frac{d}{m^2}.
\end{equation}

This follows essentially from the robust expansion property of $G$. Indeed, otherwise a random vertex set $X$ of size $dm^2$ chosen uniformly at random from $G-L$ has expected degree sum $\mathbb{E}[\sum_{x\in X} d_{G-L}(x)] \leq \frac{d}{m^2} \cdot \abs{X}$. Hence, there exists a set $X\subseteq V(G- L)$ of size $dm^2$ with $\sum_{x\in X} d_{G-L}(x) \leq \frac{d}{m^2} \cdot \abs{X}$. Then, $F= \partial_{G-L}(X)$ has at most $ \frac{d}{m^2}\abs{X} \leq d(G) \rho(\abs{X})\abs{X}$ edges as $d(G)\geq \eps^2 d$, and $\rho(dm^2) \ge \rho(n)>\frac{1}{m}> \frac{1}{\eps^2 m^2}$ due to $\eps_1\ll\eps$. Note that, by definition of $F$, once we delete the edges of $F$ from $G$, the external neighbourhood of $X$ lies entirely in $L$, that is, $N_{G\setminus F}(X)\subseteq L$.
However, this implies 
\[\abs{N_{G\setminus F}(X)} \le \abs{L}\stackrel{\eqref{eq: L small}}{\leq} d \leq \rho(dm^2)\cdot dm^2 = \rho(\abs{X})\cdot \abs{X},\]
contradicting that $G$ is an $(\eps_1,\eps_2 d)$-robust-expander. Hence we have \eqref{eq: G-L average degree}.

Now that $G-L$ is still relatively dense and no vertex in $G-L$ has too large degree as $\Delta(G-L)\leq dm^{10}$, we can find in $G-L$ many webs with disjoint interiors.
\begin{claim}\label{cl: webs}
	The graph $G-L$ contains $(m^2, m^{10}, d/m^3, 4m)$-webs $W_1,\dots, W_{2d}$ where the interiors of the webs are pairwise disjoint.
\end{claim}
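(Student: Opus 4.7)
We build the webs inductively. Suppose $W_1, \ldots, W_\ell$ with $\ell < 2d$ have been built, and set $F := L \cup \bigcup_{i \le \ell} \Int(W_i)$. Since each $\Int(W_i)$ has size $1 + m^2(1+4m) + m^{10} m^2(1+4m) = O(m^{13})$, we have $|F| \le |L| + 2d \cdot O(m^{13}) = O(dm^{13})$, which by~\eqref{eq: dm100} is much less than $n$.

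The plan for building $W_{\ell+1}$ in $G-F$ is to first select the anchor vertices (core, unit cores, star centres) and then join them by short paths. Averaging~\eqref{eq: G-L average degree} against $\Delta(G-L) \le dm^{10}$ gives at least $n/(2m^{12})$ vertices of $G-L$ with degree at least $d/(2m^2)$ in $G-L$; call these \emph{thick}. A first-moment estimate using $\sum_{u \in F} \deg_G(u) \le dn + O(d^2 m^{23}) = O(dn)$ shows that all but a $o(1)$-fraction of thick vertices lose only $O(d/m^{10})$ neighbours to any set of size $O(dm^{13})$; we call such thick vertices \emph{useful} at the current stage. The pool $D$ of useful vertices always has size $\gg m^{12}$, so we greedily pick the core $v$, the unit cores $u_1, \ldots, u_{m^2}$, and the star centres $x_{i,j}$ for $(i,j) \in [m^2]\times[m^{10}]$ from $D$, pairwise distinct.

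Next we construct the branch paths $Q_i$ from $v$ to $u_i$ and the unit paths $P_{i,j}$ from $u_i$ to $x_{i,j}$, each of length at most $4m$, pairwise internally vertex-disjoint, and disjoint from $F$, one at a time via Lemma~\ref{lem: path}. The main difficulty is that Lemma~\ref{lem: path} requires both endpoint sets to have size at least $\eps_2 d/2$, while our anchors are single vertices. To get around this we use a reservoir argument: around each anchor $w$ we grow a ball $R(w) := B^{r(w)}_G(w)$ of size at least $x_0 := dm^{14}$. Starting from $N_G(w) \cup \{w\}$, which has size at least $\delta(G) \ge d(G)/2 \ge \eps_2 d$ (since $G$ satisfies the conclusion of Lemma~\ref{lem-expander}), the multiplicative robust expansion of $G$ grows this ball by a factor of $1 + \Omega(\eps_1/\log^2 m)$ per step, so $r(w) = O(\log^3 m / \eps_1) \le m$ steps suffice in the regime $d \ge m^{100}$ and $\eta$ small enough that $\log(n/d) \gg \log m$. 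Let $W$ denote $F$ together with all path vertices used so far in $W_{\ell+1}$, so $|W| = O(dm^{13}) \ll \rho(x_0) x_0/4$ by the choice of $x_0$. Applying Lemma~\ref{lem: path} to $X_1 := R(v) \setminus W$ and $X_2 := R(u_i) \setminus W$ (each of size at least $x_0/2$) yields a path of length at most $m$ in $G - W$, and an analogous growth-and-connect argument within each reservoir extends this back to $v$ and $u_i$ in another $O(m)$ steps while remaining in $G - W$, keeping the total length at most $4m$.

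Finally, at each star centre $x_{i,j} \in D$, by usefulness there remain at least $(1-o(1))d/(2m^2) \gg d/m^3$ neighbours in $G - L$ outside every already used vertex, and we take $d/m^3$ of them as the leaves of $S_{x_{i,j}}$. This completes $W_{\ell+1}$, and induction yields the claim. The principal obstacle throughout, namely converting single-vertex endpoints into large sets so that Lemma~\ref{lem: path} applies, is resolved by the reservoir trick, whose feasibility is underwritten by the generous budget $d \ge m^{100}$.
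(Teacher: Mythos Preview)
Your proof has a genuine gap in the ``reservoir trick'' that you use to convert single anchor vertices into large sets. You grow $R(w)=B^{r(w)}_G(w)$ in the full graph $G$, apply Lemma~\ref{lem: path} to $R(v)\setminus W$ and $R(u_i)\setminus W$ to obtain a short path $P$ in $G-W$ landing at some $a\in R(v)$, and then assert that ``an analogous growth-and-connect argument within each reservoir extends this back to $v$''. This extension step is not justified. Since $R(v)$ was grown in $G$, the only path you are guaranteed from $v$ to $a$ is a path of length at most $r(v)$ in $G$, and this path may well pass through $W$ (in particular through $L$ or through interiors of earlier webs). To route from $v$ to $a$ in $G-W$ you would need to grow a ball around $v$ \emph{inside} $G-W$; but then the starting set $N_{G-W}(v)$ has size only about $d/m^2$ (this is all that ``thick and useful'' gives you), which for large $m$ falls below the threshold $\eps_2 d/2$ needed to invoke the robust expansion of $G$. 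The same problem recurs at the endpoint $a$, about which you know nothing except $a\in R(v)$. Note that this issue bites hardest at the unit cores $u_i$, from each of which you must emit $m^{10}$ internally disjoint paths; you cannot afford for the reservoir to become disconnected from $u_i$ after even one previous path is removed.

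The paper's argument circumvents this entirely by never pre-selecting the anchor vertices. Instead, it first uses the density bound $d(G-L)\ge d/m^2$ to pull out many pairwise disjoint stars of size $2d/m^3$ in $G-L-X$; the \emph{leaf sets} of these stars, each of size $2d/m^3$, are then used directly as the endpoint sets for Lemma~\ref{lem: path}. Any path so found extends to the star centre by a single edge, so no reservoir-style back-tracking is needed. Crucially, the core of each unit (and later of each web) is not chosen in advance: one builds far more paths than required (e.g.\ $m^{70}$ paths among $m^{40}$ candidate centres) and applies pigeonhole to find a centre receiving $m^{10}$ of them. This overproduction-plus-pigeonhole step is the idea you are missing; it is what replaces the need to force many disjoint paths into a single pre-designated vertex.
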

To find such webs, we follow the strategy as~\cite[Lemma 5.7]{KLShS17}. We include the proof in the online appendix.

Let $W_1,\dots, W_{2d}$ be the $(m^2, m^{10}, d/m^3, 4m)$-webs guaranteed by the claim. For each $i\in[2d]$, let $w_i$ be the core vertex of the web $W_i$ and let $C= \bigcup_{i\in [2d]} \Cen(W_i)\cup L$, and so
\[ \abs{\Ext(W_i)} =  d m^{9}\quad \enspace \text{and} \enspace \quad \abs{C} \leq  2d(m^2\cdot 4m+1)+d\le10dm^3.\]

Before moving on, we set up two pieces of notations. For a path $Q$ with endvertices $a\in \Ext(W_i)$ and $b\in \Ext(W_j)$, for some distinct $i, j\in [2d]$ such that $Q\cap (\Int(W_i)\cup \Int(W_j))=\varnothing$, we let $Q^* = Q\cup P(W_i,a)\cup P(W_j,b)$ be the $w_i,w_j$-path extending $Q$ in $W_i\cup W_j$. 

For a given set $Z$, we say a web $W$ is \emph{$Z$-good} if $\abs{\Int(W)\cap Z} \leq m^{12}/2$.
We define $(X,I,I',\mathcal{Q},f)$ be a \emph{good path system} if the followings hold.

\stepcounter{propcounter}
\begin{enumerate}[label = {\bfseries \Alph{propcounter}\arabic{enumi}}]
\item\label{Q1} $X\subseteq V(H)$ and $f:X\rightarrow [2d]$ is an injective map with $f(X)=I$.
\item\label{Q2} $\mathcal{Q}$ is a collection of paths $Q_{ij}$ for some $ij\in \binom{[2d]}{2}$ containing  $\{ Q_{f(e)}: e\in E(H[X])\}$ such that for each $ij\in \binom{[2d]}{2}$, $Q_{ij}$ is an $a_{i,j},a_{j,i}$-path of length at most $m$ where $a_{i,j}\in \Ext(W_{i})$ and $a_{j,i}\in \Ext(W_{j})$ with $Q_{ij}\cap \left(\Int(W_{i})\cup \Int(W_{j})\right) =\varnothing$.

\item\label{Q3} For each $x\in X$ and $N_H(x)\cap X =\{ y_1,\dots, y_s\}$, the only pairwise intersections between paths $P(W_{f(x)}, a_{f(x),f(y_1)}),\dots,\allowbreak P(W_{f(x)}, a_{f(x),f(y_s)})$ are at $w_{f(x)}$.
\item\label{Q4} $\{ Q_{ij}^*-C: Q_{ij}\in\cQ, ij\in \binom{[2d]}{2}\}$ is a collection of pairwise disjoint paths in $G-C$.
\item\label{Q5} $I'=\{i'\in [2d] : W_{i'} \text{ is not }V(\cQ)\text{-good}\}\subseteq [2d]\setminus I$.
\end{enumerate}

We shall show that there is a good path system with $X=V(H)$, which would finish the proof as by~\ref{Q1}--\ref{Q4}, $\bigcup_{e\in E(H)}Q_e^*$ is an $H$-subdivision.

We proceed as follows.

\renewcommand\thestep{$0$}\begin{step} Fix an arbitrary ordering $\sigma$ on $V(H)$, say the first vertex is $x_1$.  Let $X_1=\{x_1\}$, $f(x_1)=1$, $I_1:= \{1\}$, $I'_1:=\varnothing$ and $\mathcal{Q}_1=\varnothing$.
Then $(X_1,I_1,I'_1,\mathcal{Q}_1,f)$ is a good path system. Proceed to Step $1$.\end{step}

\renewcommand\thestep{$i$, $i\ge 1$}\begin{step} Stop if either $X_i= V(H)$, or $I_i\cup I'_{i}=[2d]$.
\begin{itemize}
	\item \emph{Add a new vertex}. 
	\begin{itemize}
		\item Let $x$ be the first vertex in $\sigma$ on $V(H)\setminus X_i$. Choose an unused $V(\cQ_i)$-good web $W_{i^*}$ with $i^*\in [2d]\setminus(I_i\cup I'_i)$ and let $f(x)=i^*$.
		
	   \item Find pairwise disjoint paths $Q_{f(x)f(y_1)},\dots, Q_{f(x)f(y_s)}$ satisfying \ref{Q2},\ref{Q3} and \ref{Q4} with respect to $X_{i}\cup\{x\}$, and add these paths to $\mathcal{Q}_{i}$ to obtain $\mathcal{Q}_{i+1}$.
	\end{itemize}
	
	\item \emph{Update bad webs}.
	\begin{itemize}
		\item Let $I'_{i+1}=\{i'\in [2d] : W_{i'} \text{ is not }V(\cQ_{i+1})\text{-good}\}$.
		
		\item Set $I_{i+1}= I_i\cup \{i^*\}\setminus I'_{i+1}$ and $X_{i+1} = f^{-1}(I_{i+1})$. 
		
		\item Replace $f$ with its restriction $f\mid_{X_{i+1}}$  on $X_{i+1}$.
	\end{itemize}
\end{itemize}
Proceed to Step $(i+1)$ with the new good path system $(X_{i+1},I_{i+1},I'_{i+1},\mathcal{Q}_{i+1},f)$.
\end{step}

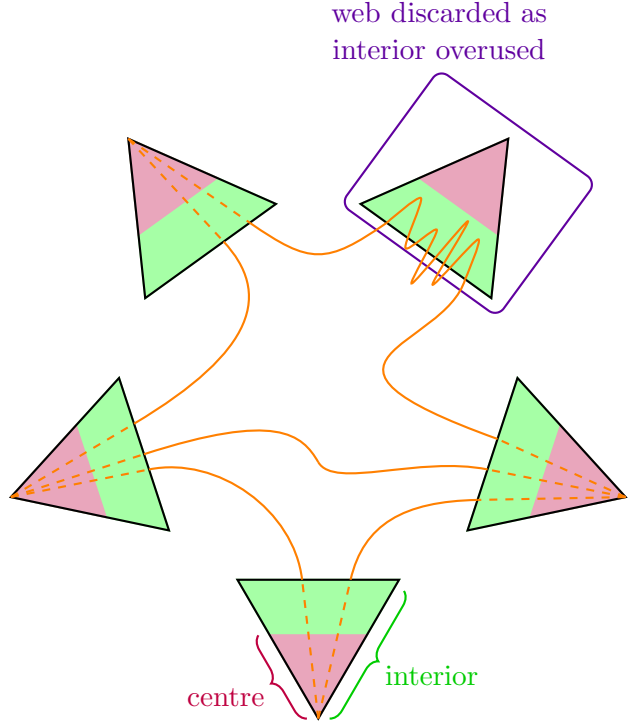
\begin{figure}[ht]
\begin{center}
\begin{tikzpicture}[thick, scale=.85]
% web a
\filldraw[green!35!white] (-90:5) -- +(60:2.5) -- +(120:2.5) -- cycle;
\filldraw[purple!35!white] (-90:5) -- +(60:1.5) -- +(120:1.5) -- cycle;
\draw (-90:5) -- +(60:2.5) -- +(120:2.5) -- cycle;
\path (-90:5) -- ++(120:2.5) -- ++(0:1) coordinate (a-to-e);
\path (-90:5) -- ++(120:3.75) -- ++(0:1.5) coordinate (a-to-e-control);
\draw[dashed, orange] (-90:5) -- (a-to-e);

\path (-90:5) -- ++(120:2.5) -- ++(0:1.75) coordinate (a-to-b);
\path (-90:5) -- ++(120:3.5) -- ++(0:2.45) coordinate (a-to-b-control);
\draw[dashed, orange] (-90:5) -- (a-to-b);

\draw [purple, decoration={brace,amplitude=5pt}, decorate] (-90:5) ++(180:5pt) -- ++(120:1.5) node [pos=0.65,anchor=north east,yshift=-5pt] {centre};
\draw [green!80!black, decoration={brace,mirror,amplitude=5pt}, decorate] (-90:5) ++(0:5pt) -- ++(60:2.3) node [pos=0.6,anchor=north west,yshift=-5pt] {interior};

% web b
\filldraw[green!35!white] (-18:5) -- +(-168:2.5) -- +(132:2.5) -- cycle;
\filldraw[purple!35!white] (-18:5) -- +(-168:1.5) -- +(132:1.5) -- cycle;
\draw (-18:5) -- +(-168:2.5) -- +(132:2.5) -- cycle;

\path (-18:5) -- ++(-168:2.5) -- ++(72:.5) coordinate (b-to-a);
\path (-18:5) -- ++(-168:3.75) -- ++(72:.75) coordinate (b-to-a-control);
\draw[dashed, orange] (-18:5) -- (b-to-a);

\path (-18:5) -- ++(-168:2.5) -- ++(72:1) coordinate (b-to-e);
\path (-18:5) -- ++(-168:3.75) -- ++(72:1.5) coordinate (b-to-e-control);
\draw[dashed, orange] (-18:5) -- (b-to-e);

\path (-18:5) -- ++(-168:2.5) -- ++(72:1.5) coordinate (b-to-c);
\path (-18:5) -- ++(-168:6.75) -- ++(72:3.75) coordinate (b-to-c-control);
\draw[dashed, orange] (-18:5) -- (b-to-c);

% web c
\filldraw[green!35!white] (54:5) -- +(-96:2.5) -- +(-156:2.5) -- cycle;
\filldraw[purple!35!white] (54:5) -- +(-96:1.5) -- +(-156:1.5) -- cycle;
\draw (54:5) -- +(-96:2.5) -- +(-156:2.5) -- cycle;
\draw[purple!50!blue, rounded corners] (54:5.25) +(144:1.5) node [anchor=south, align=center] {web discarded as\\interior overused} -- +(-156:3) -- +(-96:3) -- +(-36:1.5) -- cycle;

\path (54:5) -- ++(-96:2.5) -- ++(144:.5) coordinate (c-to-b);
\path (54:5) -- ++(-96:3.75) -- ++(144:.75) coordinate (c-to-b-control);
\path (54:5) -- ++(-96:2.5) -- ++(144:2) coordinate (c-to-d);
\path (54:5) -- ++(-96:3.75) -- ++(144:3) coordinate (c-to-d-control);

% web d
\filldraw[green!35!white] (126:5) -- +(-24:2.5) -- +(-84:2.5) -- cycle;
\filldraw[purple!35!white] (126:5) -- +(-24:1.5) -- +(-84:1.5) -- cycle;
\draw (126:5) -- +(-24:2.5) -- +(-84:2.5) -- cycle;

\path (126:5) -- ++(-24:2.5) -- ++(-144:.5) coordinate (d-to-c);
\path (126:5) -- ++(-24:3.75) -- ++(-144:.75) coordinate (d-to-c-control);
\draw[dashed, orange] (126:5) -- (d-to-c);

\path (126:5) -- ++(-24:2.5) -- ++(-144:1) coordinate (d-to-e);
\path (126:5) -- ++(-24:3.75) -- ++(-144:1.5) coordinate (d-to-e-control);
\draw[dashed, orange] (126:5) -- (d-to-e);

% web e
\filldraw[green!35!white] (-162:5) -- +(-12:2.5) -- +(48:2.5) -- cycle;
\filldraw[purple!35!white] (-162:5) -- +(-12:1.5) -- +(48:1.5) -- cycle;
\draw (-162:5) -- +(-12:2.5) -- +(48:2.5) -- cycle;

\path (-162:5) -- ++(-12:2.5) -- ++(108:1) coordinate (e-to-a);
\path (-162:5) -- ++(-12:3.75) -- ++(108:1.5) coordinate (e-to-a-control);
\draw[dashed, orange] (-162:5) -- (e-to-a);

\path (-162:5) -- ++(-12:2.5) -- ++(108:1.25) coordinate (e-to-b);
\path (-162:5) -- ++(-12:5) -- ++(108:2.5) coordinate (e-to-b-control);
\draw[dashed, orange] (-162:5) -- (e-to-b);

\path (-162:5) -- ++(-12:2.5) -- ++(108:1.75) coordinate (e-to-d);
\path (-162:5) -- ++(-12:5) -- ++(108:3.5) coordinate (e-to-d-control);
\draw[dashed, orange] (-162:5) -- (e-to-d);

% the paths
\path (-90:1) coordinate (kink) -- ++(120:.5) coordinate (k1) -- ++(-60:1) coordinate (k2);
\draw[orange] (a-to-b) .. controls (a-to-b-control) and (b-to-a-control) .. (b-to-a);
\draw[orange] (a-to-e) .. controls (a-to-e-control) and (e-to-a-control) .. (e-to-a);
\draw[orange] (b-to-e) .. controls (b-to-e-control) and (k2) .. (kink);
\draw[orange] (e-to-b) .. controls (e-to-b-control) and (k1) .. (kink);
\draw[orange] (b-to-c) .. controls (b-to-c-control) and (c-to-b-control) .. (c-to-b);
\draw[orange] (d-to-c) .. controls (d-to-c-control) and (c-to-d-control) .. (c-to-d);
\draw[orange] plot [smooth] coordinates {(c-to-b) (45:3.5) (45:2.5) (50:3.6) (51:2.3) (55:3.2) (60:2.7) (63:3.5) (c-to-d)};
\draw[orange] (d-to-e) .. controls (d-to-e-control) and (e-to-d-control) .. (e-to-d);
\end{tikzpicture}
\end{center}
\caption{An example for $H=K_{1,1,2}$ (the diamond graph).}
\end{figure}

Let $(X,I,I',\mathcal{Q},f)$ be the good path system obtained from the  above  process. Note that the sequence $\abs{X_1},\abs{X_2},\dots$ might \emph{not} be an increasing sequence, as we may delete some elements when updating the list of bad webs in each step. However, we will show that this procedure will eventually end with $X=V(H)$ as desired.

First, we claim that $\abs{I'}\le d/m$. Note that $\cQ$ might contain some paths which connects $W_{i}$ with $i\in I'$. However, as at most $\Delta(H)\le \Delta$ paths are added at each step, 
\[\abs{\cQ}\le 2d\cdot \Delta(m+1)\le dm^2.\] 
Recall that $\{\Int(W_{j}): j\in [2d]\}$ are pairwise disjoint and so, by definition of $I'$, $\abs{I'}\cdot m^{12}/2\le \abs{\cQ}$. Thus,
$\abs{I'}\le \frac{\abs{\cQ} }{m^{12}/2} < d/m$ as claimed.

As in each step a new vertex is added before updating $I'_{i+1}$, the above claim guarantees that this process terminates when $\abs{I\cup I'}\le \abs{H}+\abs{I'}<2d$, implying that $X=V(H)$. To finish the proof, it is left to show that all connections in  each step, i.e.\ paths in $\cQ_{i+1}\setminus\cQ_i$, can indeed be constructed to keep the process running.

Let $x$, $i^*=f(x)$ and $\{y_1,\ldots, y_s\}=N_H(x)\cap (X_i\cup\{x\})$ be as in \textbf{Step} $i$, for some $i\ge 1$. Consider now  $j\in I_i\cup \{i^*\}=f(X_i\cup\{x\})$. Note that as $(X_{i},I_{i},I'_{i},\mathcal{Q}_{i},f)$ is a good path system at the beginning of this step, by~\ref{Q5},  $W_j$ is $V(\cQ_i)$-good. Also, as $V(\cQ_i)$ is disjoint from $C$ by~\ref{Q2} and \ref{Q4}, at most $\Delta(H)\le \Delta$ paths in $\Cen(W_j)$ are involved in previous connections $\cQ_i^*:=\{Q^*:  Q\in\cQ_i\}$. Thus, there  are  at  least  $(m^2-\Delta)m^{10}-m^{12}/2\ge m^{12}/4$ available paths in $\Int(W_j)\setminus\Cen(W_j)$ (and their corresponding paths in $\Cen(W_j)$) disjoint from $V(\cQ_i^*)$; let $A_j\subseteq \Ext(W_j)$ be the union of the leaves of the stars corresponding to these available paths. Then $\abs{A_j}\ge a:=dm^9/4$.

Now, for each $j\in[s]$, since 
\[\abs{C\cup \Int(W_{i^*})\cup \Int(W_{f(y_j)})\cup V(\cQ_i^*)}\le 10dm^3+20m\cdot m^{12}+30m\abs{\cQ}\le dm^4\le \rho(a)a/4,\]
using Lemma~\ref{lem: path}, we can find the desired path $Q_{xy_j}$ with length at most $m$ connecting $A_{i^*}$ and $A_{f(y_j)}$ avoiding $C\cup \Int(W_{i^*})\cup \Int(W_{f(y_j)})\cup V(\cQ_i^*)$.

This finishes the proof of Lemma~\ref{lem: moderate}.
\end{proof}

\section{Subdivisions in sparse robust expanders}\label{sec-sparse}
In this section, we prove Lemma~\ref{lem: very sparse}. First, we show  in Section~\ref{sec-still-dense} that we cannot have too many edges sticking out of a small set of vertices, for otherwise we can obtain $H$ even as a subgraph in $G$. In particular, $G-L$ is still dense (Claim~\ref{cl: ave deg after deleting L}), and consequently it must contain a subexpander $F$ with $d(F)=\Omega(d)$. 

Note that this subexpander could be a lot smaller and hence we have no control on its density. Suppose for a moment that $F$ is sparse, and additionally $F$ is large, then it inherits the `bounded' maximum degree from $G-L$. This  case is handled in Section~\ref{sec-sparse-regular}. For such an `almost regular' sparse expander, we can work entirely within it to find an $H$-subdivision (Lemma~\ref{lem: sparse regular bdd max deg}).

Now, if $F$ has medium edge density, then we can invoke Lemma~\ref{lem: moderate} on $F$ and we are done.  Therefore, $F$ must either be sparse and small, or very dense. In both cases, $F$ is small in order. To this end, we may assume that all subexpanders in $G-L$ are small. In Section~\ref{sec-many-small-subexpander}, we shall iteratively pull out many small expanders in $G-L$ that are pairwise far apart (\ref{F1}--\ref{F4}), most of which expand well inside $G-L$ (Claim~\ref{cl: one expand}). 

We then in Section~\ref{sec-making-nakji} link these nicely expanding small expanders to construct nakjis (Claim~\ref{cl-many-nakjis}). The strategy here is to extract nakjis iteratively. Every time, we step far away in $G-L$ from previously built nakjis and expand an appropriate collection of expanders to find the next one.

Finally, the finishing blow is delivered in Section~\ref{sec-nakji-subd}, in which we anchor on the nakjis and connect them to build an $H$-subdivision.

\subsection{\texorpdfstring{$G-L$}{G--L} still dense}\label{sec-still-dense}
As $d< m^{100} = \Bigl(\frac{2}{\eps_1} \log^3\bfrac{15n}{\eps_2d}\Bigr)^{100}$ and $1/d\ll 1/\Delta$, we have $n> 10 \Delta d^{\Delta}$. Suppose to the contrary that $G$ does not have any $H$-subdivision.
Then by Lemma~\ref{lem: L small}, $\abs{L}<d$.

We first establish the following claim, stating that the density does not drop much upon removing a small set of vertices. The idea is that if lots of edges are incident to a small set, then we will see a dense and skewed bipartite subgraph, which contains a copy of $H$.

\begin{claim}\label{cl: ave deg after deleting L}
	For any set $U\subseteq V(G-L)$ of size at most $n/m^{200}$, we have
	$d(G-L-U) \geq \eps d/6$.	
\end{claim}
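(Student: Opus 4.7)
I would argue by contradiction: suppose there is some $U \subseteq V(G-L)$ with $|U| \leq n/m^{200}$ yet $d(G-L-U) < \eps d/6$. The plan is to locate an $H$-subgraph (hence trivially an $H$-subdivision) in $G$, contradicting the standing assumption that none exists.

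First, I would count edges. Since $d(G) \geq (1-\delta)d$ and $d(G-L-U) < \eps d/6$, the number of edges of $G$ incident to $L \cup U$ is at least $(1-\delta)dn/2 - \eps dn/12 \geq (1/2 - \eps/10)dn$, using $\delta \ll \eps$. Because every $v \in U$ lies outside $L$ and hence satisfies $d_G(v) < dm^{10}$, the contribution of $U$ to this total is at most $|U| \cdot dm^{10} \leq dn/m^{190}$, which is negligible. Therefore $\sum_{v \in L} d_G(v) \geq (1/2 - \eps/8)dn$. Combining with the trivial bound $d_G(v) \leq n-1$ and with the standing bound $|L| < d$ from Lemma~\ref{lem: L small}, we get $|L| \in [(1/2-\eps/8)d,\,d-1]$. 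Subtracting the at most $\binom{|L|}{2} \leq d^2/2 \ll \eps dn$ edges inside $L$, the bipartite graph $G' := G[L, V(G) \setminus L]$ has at least $(1/2 - \eps/6)dn$ edges, i.e.\ average $(V\setminus L)$-degree at least $(1/2 - \eps/6)n$ on the $L$-side.

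Next, I would embed $H$ into $G'$. Fix a bipartition $A \cup B$ of $H$ with $|A| \leq |B|$, so $|A| \leq (1-\eps)d/2 < (1/2-\eps/8)d \leq |L|$. I would apply dependent random choice to $G'$: sample a constant number $t=t(\Delta)$ of random vertices from $V(G)\setminus L$ and take their common $L$-neighbourhood, deleting one vertex from each ``bad'' $\Delta$-subset whose common $(V\setminus L)$-neighbourhood has fewer than $2|H|$ vertices. Since $d \leq m^{100}$ is polylogarithmic in $n$, the error term $\binom{|L|}{\Delta}(2|H|/n)^t$ is negligible, producing a subset $L^* \subseteq L$ with $|L^*| \geq |A|$ such that every $\Delta$-subset of $L^*$ has at least $2|H|$ common neighbours in $V(G)\setminus L$. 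Then embed $H$ greedily, mapping $A$ into $L^*$ and $B$ into $V(G)\setminus L$: at each step the set of valid placements is the common neighbourhood (of size $\geq 2|H|$) of the $\leq \Delta$ already-embedded neighbours, minus the $\leq |H|$ previously-used vertices, which is nonempty. This yields $H \subseteq G$, the desired contradiction.

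The main obstacle is the DRC lower bound on $|L^*|$. The standard convexity-based estimate $|L^*| \geq |L|(\bar d_Y/n)^t$ becomes weak when $|L|$ is close to $d$, so that $\bar d_Y/n$ is only about $1/2$: for the $t \geq 2$ needed to suppress the ``bad subset'' error term, this bound drops below $|A| \approx d/2$. I expect that a small case split on $|L|$ is required: when $|L|$ is close to $d/2$, every $v \in L$ must have $(V\setminus L)$-degree close to $n$ and common neighbourhoods are essentially automatic; when $|L|$ is closer to $d$ one may exploit the additional slack $|L| > |H|$ together with the fact that every $v \in L$ has $d_Y(v) \geq dm^{10}/2$ (much larger than $|H|$) to set up a more targeted embedding that does not rely on a single good subset produced by a uniform DRC sample.
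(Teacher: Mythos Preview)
Your overall strategy is sound and genuinely different from the paper's, but the ``main obstacle'' you identify is illusory: you do not need $t\ge 2$. Since $d<m^{100}$, the average degree $d$ is polylogarithmic in $n$ (indeed $n>d\exp(c\,d^{1/300})$ for some $c>0$), so already for $t=1$ the error term satisfies
\[
\binom{|L|}{\Delta}\Bigl(\frac{2|H|}{|Y|}\Bigr)^{1}\le \frac{2d^{\Delta+1}}{n-d}=o(1).
\]
With $t=1$ the first term of the dependent random choice bound is exactly $e(G')/|Y|\ge (1/2-\eps/7)d$, with no loss from convexity, hence $|L^*|\ge(1/2-\eps/6)d>(1-\eps)d/2\ge|A|$. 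Your greedy embedding of $B$ then works verbatim. The case split you sketch is therefore unnecessary (and, as stated, incomplete: ``$|L|$ close to $d/2$'' does not force every $v\in L$ to have $Y$-degree close to $n$, only on average, and ``$|L|>|H|$'' is not guaranteed in the intermediate range).

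For comparison, the paper argues directly rather than by contradiction. It sets $Z=\{v\notin L:|N_G(v,L)|>(1-\eps)d/2\}$ and shows $|Z|\le\Delta d^{\Delta}$ via a maximality argument on an assignment $f:Z\to\binom{L}{\Delta}$ (otherwise one extracts a ``blown-up'' bipartite structure containing $H$ as a subgraph). Given $|Z|$ small, the minimum-degree hypothesis $\delta(G)\ge(1-\delta)d/2$ forces every vertex outside $L\cup Z$ to have at least $\eps d/4$ neighbours in $G-L$, whence $e(G-L)\ge\eps dn/10$; subtracting the at most $\Delta(G-L)\cdot|U|\le dm^{10}\cdot n/m^{200}$ edges meeting $U$ finishes. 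Your route avoids the minimum-degree condition entirely and replaces the bespoke assignment argument by dependent random choice; the paper's route is a bit more self-contained and yields $e(G-L)$ large unconditionally, which is convenient for the later uses in Section~\ref{sec-many-small-subexpander}.
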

\begin{poc}
	Let 
	\[Z:= \{ v\in V(G-L): \abs{N_{G}(v,L)}> (1-\eps)d/2\}.\]
	
	Suppose that $\abs{Z}> \Delta d^\Delta$.  For a given function $f:Z\rightarrow \binom{L}{\Delta}$, we let 
	\[t(f):= \sum_{Y\in \binom{L}{\Delta} } \min\{\abs{f^{-1}(Y)}, \Delta\}.\]
	Let $f: Z\rightarrow \binom{L}{\Delta}$ be a function with maximum $t(f)$ among all functions satisfying $f(v) \subseteq N_{G}(v,L)$ for every $v\in Z$. As $\abs{N_{G}(v,L)} \geq \Delta$, such a function $f$ must exist.
	However, as $\abs{Z}> \Delta d^{\Delta} >\Delta \binom{\abs{L}}{\Delta}$, there exist $\Delta+1$ vertices 
	$u_1,\dots, u_{\Delta+1} \in Z$ with $f(u_1)=\dots= f(u_{\Delta+1})$. Let $X= N_{G}(u_1,L)$. If any set $Y\in \binom{X}{\Delta}$ satisfies $\abs{f^{-1}(Y)}<\Delta$, then we can redefine $f(u_1)$ to be $Y$ to increase $t(f)$. Hence, by the maximality of $f$, for every set $Y\in \binom{X}{\Delta}$, there are  $\Delta$ distinct vertices $v_Y^{1},\dots, v_Y^{\Delta} \in Z$ such that $Y\subseteq N(v_Y^{i},L)$ for each $i\in [\Delta]$ and $v_Y^{i}\neq v_{Y'}^{j}$ for all $Y\neq Y' \in \binom{X}{\Delta}$ and $i,j\in [\Delta]$.
	It is easy to see that $X$ together with the vertices $\{v_Y^{i}: Y\in \binom{X}{\Delta},i\in [\Delta]\}$ induces a graph containing any $(1-\eps)d$-vertex bipartite graph with maximum degree at most $\Delta$. Hence $G$ contains $H$ as a subgraph, a contradiction. Thus $\abs{Z}\le  \Delta d^{\Delta}.$
	
	As $n\geq 10 \Delta d^{\Delta}$ and $\abs{Z\cup L}< 2\Delta d^{\Delta}$, at least $4n/5$ vertices in $V(G)-L-Z$ have $\delta(G) - \frac{1}{2}(1-\eps)d \geq \eps d/4$ neighbours in $G-L$.
	Hence, $e(G-L) \geq \frac{1}{2}( 4n/5) \cdot (\eps d/4 ) \geq \eps dn/10$.
	Then, $G-L-U$ has at least $e(G-L) - \Delta(G-L) \abs{U} \geq \eps dn/10 - dm^{10}\cdot (n/m^{200}) \geq \eps dn/12$ edges. 
\end{poc}

From this point on, we will work with $G':=G-L$. Recall that
\begin{equation}\label{eq: G' max deg}
\Delta(G')\leq d m^{10} \leq m^{110}.
\end{equation}

\subsection{`Bounded' degree sparse expander}\label{sec-sparse-regular}
As outlined at the start, since $G-L$ is still dense, it contains a subexpander $F$ at our disposal. We first take care of the case when this subexpander is sparse and `almost regular'.

We will use the following proposition to take many vertices that are pairwise far apart (\cite[Proposition 5.3]{LM17} taking $s=2000$).
\begin{prop}\label{prop: pairwise far}
	Suppose that $F$ is an $n$-vertex graph with $\Delta(F)\leq \log^{50000}{n}$, and $n$ sufficiently large.
	Then there is a set of at least $n^{1/5}$ vertices pairwise having distance at least $\frac{\log{n}}{100000\log\log{n}}$.
\end{prop}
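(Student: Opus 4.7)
The plan is to use a greedy ball-removal argument, exactly the standard approach for such "spread out" subset selections. Set $\tau := \frac{\log n}{100000 \log\log n}$. I want to iteratively pick vertices $v_1, v_2, \ldots$, each time deleting the closed ball of radius $\tau - 1$ around the chosen vertex, so that no two chosen vertices end up within distance less than $\tau$.

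The key arithmetic input is a bound on the size of such a ball. Since $\Delta(F) \le \Delta := \log^{50000} n$, for any vertex $v$,
\[
|B^{\tau - 1}(v)| \le 1 + \Delta + \Delta^2 + \cdots + \Delta^{\tau - 1} \le 2 \Delta^{\tau - 1}.
\]
With $\Delta = \log^{50000} n$ and $\tau - 1 \le \frac{\log n}{100000 \log\log n}$, this gives
\[
\Delta^{\tau - 1} \le \exp\!\left( 50000 \log\log n \cdot \frac{\log n}{100000 \log\log n}\right) = \exp(\tfrac{1}{2}\log n) = n^{1/2},
\]
so each ball removed has at most $2n^{1/2}$ vertices.

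Now do the greedy selection: start with $V(F)$; at each step, pick any remaining vertex $v_i$, and remove $B^{\tau-1}(v_i)$ from the pool. By the above bound, each step removes at most $2 n^{1/2}$ vertices, so the process continues for at least $\lfloor n / (2 n^{1/2}) \rfloor = \lfloor n^{1/2}/2 \rfloor \ge n^{1/5}$ steps when $n$ is sufficiently large. Any two chosen vertices $v_i, v_j$ with $i < j$ satisfy $v_j \notin B^{\tau - 1}(v_i)$, i.e., $\mathrm{dist}_F(v_i, v_j) \ge \tau$, which is exactly the desired separation. The only "obstacle" is merely checking the arithmetic; there is no structural difficulty.
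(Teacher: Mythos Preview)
Your proof is correct; the greedy ball-removal argument is exactly the standard one. The paper does not actually prove this proposition but simply cites \cite[Proposition~5.3]{LM17} (with $s=2000$), and your argument is essentially the proof given there.
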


\begin{lemma}\label{lem: sparse regular bdd max deg}
Let $0\ll 1/d\ll \eps_1,\eps_2\ll \eps,1/\Delta<1$ and let $H$ be a graph with at most $d$ vertices and $\Delta(H)\le \Delta$. Suppose $F$ is an $n$-vertex $(\eps_1,\eps_2d)$-robust-expander with $\delta(F)\ge \eps^2 d$. If $\Delta(F)\leq \log^{30000}{n}$, then $F$ contains an $H$-subdivision.
\end{lemma}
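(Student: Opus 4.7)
The plan is to follow the strategy sketched in Section~\ref{sec: outline} for the sparse expander case, but crucially without invoking the full nakji machinery, because the bound $\Delta(F)\le\log^{30000}n$ provides enough regularity to work with balls around single vertices. Specifically, I would pick $|H|$ pairwise-far anchor vertices in $F$ via Proposition~\ref{prop: pairwise far}, and then iteratively embed each edge of $H$ as an internally disjoint path, using Proposition~\ref{prop: expansion after deleting shortest path} to handle the (at most $\Delta$) paths meeting at each anchor.

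First I would apply Proposition~\ref{prop: pairwise far} to $F$ to obtain a set of at least $n^{1/5}$ vertices pairwise at distance at least $D:=\frac{\log n}{100000\log\log n}$; this is valid since $\Delta(F)\le\log^{30000}n\le\log^{50000}n$, and since $\Delta(F)\ge\delta(F)\ge\eps^2 d$, the constraint $\Delta(F)\le\log^{30000}n$ forces $n\ge d^5$ when $1/d$ is sufficiently small, so $n^{1/5}\ge|H|$. Pick any $|H|$ of these vertices as anchors $v_1,\dots,v_{|H|}$ corresponding to $x_1,\dots,x_{|H|}$, and for each $i$ set $X_i:=B^{1}_F(v_i)$, which has size at least $\eps^2 d\ge\eps_2 d$. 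Since $(\Delta(F))^{D/3}\le n^{1/10}$ while the anchors are at mutual distance at least $D$, the balls $B^{D/3}_F(v_i)$ are pairwise disjoint, giving ample private workspace around every anchor.

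Now enumerate the edges of $H$ as $e_1,\dots,e_{h'}$ with $h'\le\Delta|H|/2$, and iteratively construct pairwise internally disjoint paths $P_1,\dots,P_{h'}$, where $P_j$ is a $v_{a_j},v_{b_j}$-path if $e_j=x_{a_j}x_{b_j}$. When building $P_j$, let $W_j$ be the union of $\bigcup_{k<j}\Int(P_k)$ together with the anchors $v_i$ for $i\notin\{a_j,b_j\}$. At each anchor $v_i$, the idea is to order the at most $\Delta$ eventual incident paths so that their initial segments (inside the local ball) are consecutive shortest paths from $X_i$ in the current residual graph. Then Proposition~\ref{prop: expansion after deleting shortest path} (applied with $q\le\Delta\ll|X_i|\log^{-8}|X_i|$) guarantees that after removing these shortest paths together with $W_j$, the ball of radius $r$ around $X_i$ in the punctured graph still has size at least $\exp(r^{1/4})$. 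Taking $r$ of order $(\log\log n)^{100}$ produces two vast sets around $v_{a_j}$ and $v_{b_j}$ to which Lemma~\ref{lem: path} applies, yielding a short connecting path in $F$ avoiding $W_j$ and the other anchor balls; extending this path back through $X_{a_j}$ and $X_{b_j}$ to $v_{a_j}$ and $v_{b_j}$ delivers $P_j$.

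The main obstacle will be the bookkeeping needed to make the paths incident to each anchor behave as consecutive shortest paths from $X_i$ in the precise sense required by Proposition~\ref{prop: expansion after deleting shortest path}. This dictates the order in which the paths leaving a given anchor are constructed and the way in which each newly found connecting path is pulled back from the boundary of $X_i$ through to $v_i$ itself. The size hypotheses in Proposition~\ref{prop: expansion after deleting shortest path} and Lemma~\ref{lem: path} are all met comfortably, since $|W_j|\le O(h'm)=O(dm)$ is only polylogarithmic in $n$ while the grown balls reach sizes far exceeding this.
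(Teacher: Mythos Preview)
Your overall strategy is the same as the paper's: pick far-apart anchors via Proposition~\ref{prop: pairwise far}, build the paths one edge at a time, use Proposition~\ref{prop: expansion after deleting shortest path} for local growth at each anchor, and Lemma~\ref{lem: path} for the connection. The bookkeeping issue you flag (making incident paths at each anchor behave as consecutive shortest paths) is real and is handled in the paper exactly by the invariant you have in mind.

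There is, however, a genuine quantitative gap that the single-radius expansion you propose cannot close. To keep the consecutive-shortest-path property usable at every anchor in future rounds, your new path must avoid the inner ball $B^{r}(v_i)$ around every anchor $v_i$ with $i\notin\{a_j,b_j\}$; otherwise, at some later step, foreign path-segments cross $B^{r}(v_i)$ and would have to be absorbed into the set $Y$ of Proposition~\ref{prop: expansion after deleting shortest path}, but the hypothesis $|Y|\le\frac14\rho(|X_i|)|X_i|=\Theta(d)$ then fails (the previous paths already carry $\Theta(d\cdot\mathrm{polylog}\,n)$ vertices). So the avoidance set for Lemma~\ref{lem: path} must include these $\Theta(d)$ protected balls, of total size about $d\cdot\Delta(F)^{r}\approx\exp\bigl(\Theta(r\log\log n)\bigr)$. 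On the other hand, your grown sets have size only $\exp(r^{1/4})$. Since $r\log\log n\gg r^{1/4}$ for every $r\ge 1$, the avoidance set always swamps the grown sets, and Lemma~\ref{lem: path} is not applicable. In particular, your choice $r=(\log\log n)^{100}$ gives grown sets of size $\exp((\log\log n)^{25})$ but protected balls of size roughly $\exp((\log\log n)^{101})$.

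The paper resolves this tension with a two-radius expansion. It protects only a small inner ball of radius $r=(\log\log n)^{5}$ around each anchor (so their union has size $\exp((\log\log n)^{O(1)})$), applies Proposition~\ref{prop: expansion after deleting shortest path} there with $Y=\varnothing$ to get a set of size at least $d\log^{8}n$, and then applies the proposition a second time from this already-large set out to radius $r'=\sqrt{\log n}$, now with $Y=W_1$ legitimately (since $|W_1|\le\Delta d\log^4 n\ll\rho(d\log^8 n)\cdot d\log^8 n$). The second-stage sets reach size $\exp(\sqrt[9]{\log n})$, which comfortably dominates both $W_1$ and the union of the small protected balls, so Lemma~\ref{lem: path} applies. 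Your sketch is missing precisely this two-stage step; once you insert it with the right choice of $r$ and $r'$, the rest of your outline goes through just as in the paper.
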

\begin{proof}
Let 
\[r:= (\log\log n)^5\quad \text{ and } \quad r' = \sqrt{\log n}.\]
As $d\le \delta(F)/\eps^2\le \Delta(F)/\eps^2\le \log^{30001}n$, Proposition~\ref{prop: pairwise far} implies that we can find vertices $v_1,\dots, v_h$, where $h=\abs{H}\leq d$, such that the distance between any two of them is at least $2r+2r'$.
Let $x_1,\dots, x_h$ be the vertices of $H$ and $e_1=x_{a_1}x_{b_1},\dots, e_{h'}=x_{a_{h'}}x_{b_{h'}}$ be the edges of $H$ where $h'=e(H)\le \Delta d/2$.

Suppose that we have $Q_{1},\dots, Q_{\ell}$ for some $0\leq \ell< h'$ such that:
\stepcounter{propcounter}
\begin{enumerate}[label = {\bfseries \Alph{propcounter}\arabic{enumi}}]
	\item\label{QQ1}  for each $i\in [\ell]$, $Q_{i}$ is a $v_{a_{i}},v_{b_{i}}$-path with length at most $2\log^4n$;
	\item\label{QQ2}  for distinct $i,j\in [\ell]$, the paths $Q_{i}$ and $Q_{j}$ are internally vertex disjoint;
	\item\label{QQ3}  for each $i\in [h]$, for the edges $\{e_{k_1}, \dots, e_{k_{s}}\} =\{ e_{k_j} : k_j\in [\ell], x_i\in e_{k_j}\}$ with $k_1 < \dots< k_s$, the paths $Q_{{k_1}},\dots, Q_{{k_s}}$  form consecutive shortest paths from $v_i$ in $B^r(v_i)$; and
	\item\label{QQ4} for any $i\in[h]$ and $j\in[\ell]$ with $x_i\notin e_j$, $B^r(v_i)$ and $V(Q_{j})$ are disjoint.
\end{enumerate}
Let 
\[W_1 =\bigcup_{i\in [\ell]} \Int(Q_{i}), \enspace\quad W_2:=\bigcup_{ \substack{i\in [h]:~ x_i\notin e_{\ell+1}}} B^r(v_i) \quad\text{ and } \quad W=W_1\cup W_2.\]
Note that $v_1,\dots, v_h$ are pairwise a distance at least $2r+2r'$ apart, so by \ref{QQ4} we have
\[\abs{B^r_{F-W}(v_{k_{\ell+1}})}=\abs{B^r_{F-W_1}(v_{k_{\ell+1}})}
= \abs{B^{r-1}_{F-W_1}( B^1_{F-W_1}(v_{k_{\ell+1}}))}\] for each $k\in \{a,b\}$. Note that $\abs{B^1_{F-W_1}(v_{k_{\ell+1}})} \geq \eps^2 d - \Delta \geq \eps^2 d/2$ by \ref{QQ3} and \ref{QQ4}. By \ref{QQ3}, we can apply
Proposition~\ref{prop: expansion after deleting shortest path} with $B^1_{F-W_1}(v_{k_{\ell+1}}), W_1, \varnothing,\Delta$ playing the roles of $X, P, Y,q$, and then for each $k\in \{a,b\}$ we have 
\[\abs{B^r_{F-W}(v_{k_{\ell+1}})}=\abs{B^r_{F-W_1}(v_{k_{\ell+1}})} \geq \exp( (r-1)^{1/4}) \geq d \log^{8}{n},\]
where the last inequality follows from $d\le \log^{30001}n$.
This implies that \[\abs{W_1} \leq h'\cdot 2\log^4n \leq \Delta d\log^4n < \frac{1}{4} \rho(\abs{B^r_{F-W}(v_{k_{\ell+1}})}) \cdot \abs{B^r_{F-W}(v_{k_{\ell+1}})}.\]
Hence, by applying Proposition~\ref{prop: expansion after deleting shortest path}, now with $B^r_{F-W}(v_{k_{\ell+1}}), \varnothing, W_1$ playing the roles of $X,P,Y$ for each $k\in \{a,b\}$, we similarly have 
\[\abs{B^{r+r'}_{F-W}(v_{k_{\ell+1}})} =\abs{B^{r+r'}_{F-W_1}(v_{k_{\ell+1}})} \geq \exp((r')^{1/4}) \geq 
\exp(\sqrt[9]{\log n}).\]
 As $\Delta(F)\leq  \log^{30000}{n}$ and $d\le \log^{30001}n$, we then have
\[
\abs{W}\leq \abs{W_1} + \abs{W_2} \leq \Delta d\log^4{n} + d \cdot 2 (\log^{30000}n)^r < \frac{1}{4}\rho\left(\exp(\sqrt[9]{\log n})\right) \exp(\sqrt[9]{\log n}).
\]
Therefore, by Lemma~\ref{lem: path}, there is a path in $F-W$ of length at most $\log^4n$ between $B^{r+r'}_{F-W}(v_{a_{\ell+1}})$ and $ B^{r+r'}_{F-W}(v_{b_{\ell+1}})$. So we can let $Q_{{\ell+1}}$ be a shortest path between $v_{a_{\ell+1}}$ and $v_{b_{\ell+1}}$ in $F-W$, which has length at most $\log^4n+2r+2r'\le 2\log^4n$. Hence, $Q_1,\ldots, Q_{\ell+1}$ satisfy~\ref{QQ1}--\ref{QQ4}. Repeating this for $\ell=0,1,\dots, h'-1$, then the union of all paths $\bigcup_{i\in[h']}Q_i$ is an $H$-subdivision.
\end{proof}

\subsection{Many small subexpanders}\label{sec-many-small-subexpander}
With Lemma~\ref{lem: sparse regular bdd max deg} at hand, we can now proceed to show that all subexpanders in $G'=G-L$ must be small and that we can find many of them pairwise far apart.

Let $\mathcal{F}$ be a maximal collection of subgraphs of $G'$ satisfying the following.
\stepcounter{propcounter}
\begin{enumerate}[label = {\bfseries \Alph{propcounter}\arabic{enumi}}]
\item\label{F1}  For each $F\in \mathcal{F}$, $F$ is an $(\eps_1,\eps_2 d)$-expander with $d(F)\geq \eps d/10$ and $\delta(F)\geq \eps d/20$ and $F$ is $\eps^2 \nu d-$connected.
\item\label{F2}  For distinct $F,F'\in \mathcal{F}$, we have $B^{\sqrt{\log{n}}}_{G'}(V(F)) \cap B^{\sqrt{\log{n}}}_{G'}(V(F')) = \varnothing$.
\end{enumerate}
For each $F\in \mathcal{F}$, let 
\[n_F = \abs{F}, \quad\enspace  m_F = \frac{2}{\eps_1} \log^{3}\left(\frac{15n_F}{\eps_2d}\right) \enspace \quad\text{and} \quad\enspace U =\bigcup_{F\in\cF} B_{G'}^{2\sqrt{\log{n}}}(V(F)).\]
If some $F\in \mathcal{F}$ satisfies $ m_F^{100} < d(F) < \sqrt{n_F}$, then $F$ satisfies the conditions on Lemma~\ref{lem: moderate}. Hence Lemma~\ref{lem: moderate} yields an $H$-subdivision in $F$, a contradiction. Thus either
\[ n_F \le d(F)^2 \leq  \Delta(G')^2 \stackrel{\eqref{eq: G' max deg}}{\leq} m^{220}  \enspace \quad \text{or} \quad \enspace d(F) \leq m_F^{100}.\]
If the latter case holds, we claim that $n_F\leq \exp(\sqrt[50]{\log{n}})$. Indeed, if 
$n_F> \exp(\sqrt[50]{\log{n}})$ holds, then we have 
\[\Delta(F) \leq \Delta(G') \stackrel{\eqref{eq: G' max deg} }{\leq} m^{110} \leq \log^{30000}n_F.\] 
So we can apply Lemma~\ref{lem: sparse regular bdd max deg} on  $F$ to get an $H$-subdivision, a contradiction.
Thus we have
\begin{enumerate}[resume*]
	\item\label{F3}  for each $F\in \mathcal{F}$, $n_F \leq \max\{m^{220}, \exp(\sqrt[50]{\log{n}}) \} = \exp(\sqrt[50]{\log{n}}).$
\end{enumerate}
Moreover, 
if $\abs{\mathcal{F}} < n^{0.99}$, then by \eqref{eq: G' max deg} this implies that 
\[\abs{U}\leq n^{0.99}\cdot \exp(\sqrt[50]{\log{n}})\cdot 2\Delta(G')^{2\sqrt{\log{n}}} \leq  \frac{n}{m^{200}}.\] Hence, Claim~\ref{cl: ave deg after deleting L} implies that 
$G'-U$ has average degree at least $\eps d/6$.
Thus Lemma~\ref{lem-expander} finds another expander $F$ satisfying \ref{F1} in $G'-U$. Then \ref{F2} also holds, contradicting the maximality of $\cF$. Hence, we have
\begin{enumerate}[resume*]
	\item\label{F4} $\abs{\mathcal{F}}\geq n^{0.99}$.
\end{enumerate}

Futhermore, the following claim states that most of the expanders in $\cF$ expand well in $G'$.

\begin{claim}\label{cl: one expand}
There exist at least $\abs{\mathcal{F}}-d\log n$ graphs $F$ in $\mathcal{F}$ such that for each $1\leq r \leq \log{n}$, 
$$\abs{B^{r}_{G'}(V(F))} \geq  \frac{1}{d}\exp(r^{1/4}).$$
\end{claim}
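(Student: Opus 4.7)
The plan is to fix each radius $r\in[\log n]$ separately, bound the number of expanders in $\mathcal{F}$ that violate the claim at this specific $r$ by at most $d-1$, and then sum over the at most $\log n$ values of $r$. For each such $r$ let
\[\mathcal{F}_r:=\{F\in\mathcal{F}:\ |B^r_{G'}(V(F))|<\tfrac{1}{d}\exp(r^{1/4})\};\]
if $|\mathcal{F}_r|\leq d-1$ for every $r\in[\log n]$, then the set of $F$ violating the claim has size at most $\sum_{r\in[\log n]}|\mathcal{F}_r|<d\log n$, as required.

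To bound $|\mathcal{F}_r|$, I would argue by contradiction: suppose $|\mathcal{F}_r|\geq d$, pick any distinct $F_1,\ldots,F_d\in\mathcal{F}_r$, and set $X:=\bigcup_{i\in[d]}V(F_i)$. By \ref{F1}, each $|V(F_i)|\geq\delta(F_i)+1\geq\eps d/20$, and by \ref{F2} the sets $V(F_i)$ are pairwise disjoint, so $|X|\geq\eps d^2/20\gg\eps_2 d$. The aggregation is chosen so that $L$, of size less than $d$ by \eqref{eq: L small}, becomes negligible compared to $\rho(|X|)|X|=\Theta(\eps_1\eps d^2/\log^2 d)$; in particular $|L|\leq\tfrac14\rho(|X|)|X|$ since $1/d\ll\eps_1,\eps_2\ll\eps$. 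I would then invoke Proposition~\ref{prop: expansion after deleting shortest path} on $X$ in $G$ with $Y:=L$, $q:=0$, and $P:=\varnothing$, obtaining $|B^r_{G-L}(X)|\geq\exp(r^{1/4})$.

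Since $G'=G-L$, this gives $|B^r_{G'}(X)|\geq\exp(r^{1/4})$. On the other hand $X=\bigcup_i V(F_i)$ implies $B^r_{G'}(X)=\bigcup_i B^r_{G'}(V(F_i))$, so the definition of $\mathcal{F}_r$ yields
\[|B^r_{G'}(X)|\leq\sum_{i=1}^d|B^r_{G'}(V(F_i))|<d\cdot\tfrac{1}{d}\exp(r^{1/4})=\exp(r^{1/4}),\]
a contradiction. The only delicate point is ensuring the aggregated set $X$ is large enough that $L$ can be absorbed as the forbidden set $Y$ in the proposition: a single $|V(F)|=\Theta(d)$ would give $\rho(|V(F)|)|V(F)|=\Theta(\eps_1 d/\log^2 d)\ll d\geq |L|$, so the proposition cannot be applied one expander at a time; taking $d$ of them lifts $|X|$ to $\Theta(d^2)$, which comfortably dominates $|L|$ and makes the whole argument go through.
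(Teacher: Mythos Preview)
Your proof is correct and follows essentially the same approach as the paper: aggregate $d$ expanders into a set $X$ large enough that $|L|\le\tfrac14\rho(|X|)\,|X|$, apply Proposition~\ref{prop: expansion after deleting shortest path} with $Y=L$ and $q=0$, and then use a union/pigeonhole argument to bound the bad expanders at each radius by $d-1$ before summing over $r\in[\log n]$. The paper phrases the last step as iteratively extracting a good $F$ from any $d$ remaining, which is just the contrapositive of your direct bound on $|\mathcal{F}_r|$.
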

\begin{poc}
Fix a choice of $1\le r\le \log n$ and consider a set $\mathcal{I}\subseteq \mathcal{F}$ with $\abs{\mathcal{I}}= d$ and let $X=\bigcup_{F\in \mathcal{I}} V(F)$.
Note that by~\ref{F1}, $\abs{X} \geq \eps d^2/10$, and so $\abs{L}<d\le  \frac{1}{4}\rho(\abs{X})\abs{X}$.
By Proposition~\ref{prop: expansion after deleting shortest path} with $L$ playing the role of $Y$, we have 
\[\abs{B^{r}_{G'}(X)} \geq \exp( r^{1/4}).\]
Then by the pigeonhole principal, there exists $F\in \mathcal{I}$ such that 
\[\abs{B^{r}_{G'}(V(F))} \geq \frac{1}{d}\exp(r^{1/4}).\]
Therefore, whenever there are $d$ members left in $\cF$, we can keep picking out one, the $r$-ball around which expands nicely in $G'$. Now varying $r$, we see that there are at least $\abs{\cF}-d\log n$ graphs $F$ in $\cF$ as claimed.
\end{poc}

By losing a factor of $2$ in size, i.e.\ instead of~\ref{F4}, $\abs{\cF}\ge n^{0.99}/2$, we may assume now every member in $\cF$ expands well in $G'$ as in Claim~\ref{cl: one expand}

\subsection{Linking subexpanders to nakjis}\label{sec-making-nakji}
We will now connect these far apart nicely expanding subexpanders in $\cF$ to obtain $d$ separate $(\Delta,\exp(\sqrt[50]{\log n}),m,2\sqrt{\log n})$-nakjis to anchor.

More precisely, take the following collections with maximum possible $p\in\mathbb{N}$.
\stepcounter{propcounter}
\begin{enumerate}[label = {\bfseries \Alph{propcounter}\arabic{enumi}}]
	\item \label{C1} $\mathcal{C}:=\{ C_{i,j} : i\in [p], 0\leq j\leq \Delta \}$ is a collection of vertex sets of distinct graphs in $\mathcal{F}$.
	\item \label{C2} For each $(i,j)\in [p]\times [\Delta]$, we have a vertex set $S_{i,j}$ with $C_{i,j}\subseteq S_{i,j} \subseteq B^{\sqrt[10]{\log{n}}}_{G'}(C_{i,j})$ with $\abs{S_{i,j}} = \exp(\sqrt[50]{\log{n}})$ and $G[S_{i,j}]$ is connected.
	\item \label{C3} $\{P_{i,j} : (i,j)\in [p]\times[\Delta]\}$ is a collection of pairwise vertex disjoint paths, each of length at most $10m$, such that $P_{i,j}$ is a $u_{i,j},v_{i,j}$-path where $u_{i,j}\in C_{i,0}$ and $v_{i,j}\in S_{i,j}$ and the internal vertices of $P_{i,j}$ are not in $C_{i,0}\cup S_{i,j}$.
	\item \label{C4} For distinct $(i,j), (i',j')\in [p]\times [\Delta]$, $P_{i,j}$ is disjoint from $S_{i',j'}$ and $\bigcup_{i''\in [p]\setminus \{i\}}C_{i'',0}$.
\end{enumerate}

So, by~\ref{F1} and~\ref{C1}--\ref{C4}, we see that for each $i\in[p]$, $\bigcup_{0\le j\le \Delta}C_{i,j}$ and $\bigcup_{j\in[\Delta]}P_{i,j}$ form a $(\Delta,\exp(\sqrt[50]{\log n}),m,2\sqrt{\log n})$-nakji, in which $C_{i,0}$ is the head and each $C_{i,j}$, $j\in[\Delta]$, is a leg.

\begin{claim}\label{cl-many-nakjis}
We have $p\geq d$.	
\end{claim}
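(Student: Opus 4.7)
The plan is to argue by contradiction: suppose $p<d$, and construct one additional nakji to contradict the maximality of $p$. Let $W$ denote the union of vertices in all existing nakji structures (the heads $C_{i,0}$, the sets $S_{i,j}$, and the paths $P_{i,j}$ for $i\in[p]$, $j\in[\Delta]$). Since each $S_{i,j}$ and $C_{i,0}$ has size at most $\exp(\sqrt[50]{\log n})$ by \ref{F3} and each $P_{i,j}$ has length at most $10m$, one has $\abs{W}\le O(d\Delta\exp(\sqrt[50]{\log n}))$.

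The first step is to locate enough members of $\mathcal{F}$ with good ball growth in $G-L-W$. I would mimic the averaging of Claim~\ref{cl: one expand}, but replace the size parameter $d$ there with $s:=\exp(2\sqrt[50]{\log n})$: a union $X$ of $s$ members of $\mathcal{F}$ has $\abs{X}\ge s\eps d/20$, large enough that $\abs{L\cup W}\le \rho(\abs{X})\abs{X}/4$. Applying Proposition~\ref{prop: expansion after deleting shortest path} to $G$ with $Y=L\cup W$ then yields $\abs{B^{r}_{G-L-W}(X)}\ge\exp(r^{1/4})$, and pigeonhole plus the iteration scheme of Claim~\ref{cl: one expand} produces at least $\abs{\mathcal{F}}-s\log n$ members $F\in\mathcal{F}$ with $\abs{B^{r}_{G-L-W}(V(F))}\ge\exp(r^{1/4})/s$ for all $1\le r\le\log n$. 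Since $\abs{\mathcal{F}}\ge n^{0.99}/2$ comfortably absorbs both $s\log n$ and the $d(\Delta+1)$ already-used members, I can select $\Delta+1$ unused ``$(W,s)$-good'' expanders and designate them as the new head $C_{p+1,0}$ and the new legs $C_{p+1,j}$.

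With the head and legs fixed, each $S_{p+1,j}$ can be built by breadth-first search inside $G'-W$ starting from $C_{p+1,j}$: goodness at radius $\sqrt[10]{\log n}$ gives a ball of size at least $\exp(\sqrt[40]{\log n})/s$, which dwarfs the required $\exp(\sqrt[50]{\log n})$, so the BFS terminates well within the radius bound of \ref{C2}. Pairwise disjointness of the $S_{p+1,j}$'s and their disjointness from existing $C_{i,0}$'s and $S_{i,j'}$'s follows from \ref{F2} together with $\sqrt[10]{\log n}\ll\sqrt{\log n}$. The paths $P_{p+1,j}$ are then built iteratively: at stage $j$ the forbidden set grows to $W^{(j)}$ of size at most $2\abs{W}$, but the same averaging argument still applies, so one can grow balls of radius $r_0=(\log n)^{1/10}$ around $C_{p+1,0}$ and $S_{p+1,j}$ inside $G-L-W^{(j)}$ of size $\gg \eps_2 d$. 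Lemma~\ref{lem: path} applied to $G$ then connects these two sets by a path of length at most $m$ avoiding $W^{(j)}$, and extending along the BFS-trees back to the endpoints yields $P_{p+1,j}$ of length at most $m+2r_0\le 10m$, verifying \ref{C1}--\ref{C4} and contradicting maximality.

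The main obstacle is that $\abs{W}$ vastly exceeds $\abs{L}$, so the $d$-averaging of Claim~\ref{cl: one expand} is too weak. The key calibration is the choice $s=\exp(2\sqrt[50]{\log n})$: it is large enough that unions of $s$ members swamp $\abs{L\cup W}$ in the sense $\rho(\abs{X})\abs{X}/4\ge \abs{L\cup W}$, yet small enough--crucially because every member of $\mathcal{F}$ has size at most $\exp(\sqrt[50]{\log n})$ by \ref{F3}--that the per-member pigeonhole bound $\exp(r^{1/4})/s$ still exceeds $\eps_2 d$ at the moderate radii that appear. A further minor technicality is ensuring goodness of the chosen head and legs holds simultaneously for every stage-$j$ forbidden set $W^{(j)}$, absorbed by a further union bound costing only an extra factor of $\Delta\log n$.
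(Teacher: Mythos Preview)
Your calibration idea---boosting the averaging parameter from $d$ to $s=\exp(2\sqrt[50]{\log n})$ so that unions of $s$ members of $\mathcal{F}$ swamp $|L\cup W|$---is sound and does yield many members $F$ with $|B^r_{G'-W}(V(F))|\ge \exp(r^{1/4})/s$. This is enough to build the sets $S_{p+1,j}$. The gap is in the path-building step.

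Once $C_{p+1,0}$ is \emph{fixed}, the sentence ``the same averaging argument still applies'' is not valid: averaging tells you that \emph{most} members are good with respect to a \emph{fixed} forbidden set, not that a particular pre-chosen member is good with respect to a set that depends on choices made after it was chosen. Your ``union bound costing an extra factor of $\Delta\log n$'' does not resolve this, because the sets $W^{(j)}$ are not a fixed family---each $W^{(j)}$ contains the paths $P_{p+1,1},\dots,P_{p+1,j-1}$, which themselves depend on the goodness you are trying to establish. Concretely, those previous paths (up to $10\Delta m$ vertices, and in this regime $m$ may be far larger than $|C_{p+1,0}|\asymp d$) are attached to $C_{p+1,0}$ and could in principle block its expansion entirely in $G'-W^{(j)}$; goodness with respect to $W$ alone gives no control here. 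Nor can you simply feed the extra paths into the $Y$ of Proposition~\ref{prop: expansion after deleting shortest path} applied at $X=C_{p+1,0}$, since already $|L|<d$ is of the same order as $\rho(|C_{p+1,0}|)|C_{p+1,0}|/4$.

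The paper sidesteps both issues at once. First, it chooses the head candidates and the leg candidates to lie outside $W':=B_{G'}^{\sqrt{\log n}}(W)$, so that any ball of radius $\sqrt{\log n}$ grown from them automatically misses $W$---no per-member goodness with respect to $W$ is needed. Second, rather than fixing a single head, it takes a pool $\mathcal{F}_0$ of $d$ head candidates and sets $X=\bigcup_{F\in\mathcal{F}_0}V(F)$, so that $|X|\ge \eps d^2/10$ and now $|L|\le \rho(|X|)|X|/4$. It then builds the paths as \emph{consecutive shortest paths from $X$}, so Proposition~\ref{prop: expansion after deleting shortest path} applies with $Y=L$ and $P$ the paths, giving robust growth of $X$ past all previously built paths. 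Building $\Delta d$ such paths and pigeonholing yields one head with $\Delta$ legs. The combination of the distance trick, the large union, and consecutive shortest paths is what makes the iteration go through; your single-head architecture lacks a substitute for these.
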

\begin{poc}
Suppose to the contrary that $p<d$. Let $W$ be the set of vertices involved in $\mathcal{C}$ and all the paths $P_{i,j}$, and $W'$ be a $\sqrt{\log n}$-ball in $G'$ around $W$. That is,
\[W=\bigcup_{(i,j)\in [p]\times [\Delta]} V(P_{i,j})  \cup \bigcup_{C\in\mathcal{C}}V(C), \quad \text{ and } \quad W'=B_{G'}^{\sqrt{\log n}}(W).\]
Then, by~\ref{C1}--\ref{C3} and \eqref{eq: G' max deg}, we see that
\[\abs{W}\le (\Delta+1)d \cdot \bigl(\exp\bigl(\sqrt[50]{\log n}\bigr)+20m\bigr)\le \exp\bigl(\sqrt[49]{\log n}\bigr),\]
and
\[\abs{W'}\le \exp\bigl(\sqrt[49]{\log n}\bigr)\cdot 2\Delta(G')^{\sqrt{\log n}}\le n^{0.1}.\]
We shall find a nakji in $G'-W'$, which will lead to a contradiction to the maximality of $p$.

As $\abs{\mathcal{F}} -\abs{W'}> n^{0.98}$, we can choose two disjoint collections $\cF_0,\cF'\subseteq \cF$, containing subexpanders disjoint from $W'$, with $\abs{\cF_0}=d$ and $\abs{\cF'}=n^{0.97}$.  Let $X= \bigcup_{F\in \mathcal{F}_0} V(F)$, then by \ref{F1}, we have $\abs{X}\geq \frac{\eps d^2}{10}$.

Now assume that, for some $0\leq \ell\leq \Delta d$, we have pairwise vertex disjoint paths $Q_{1},\dots, Q_{\ell}$ in $G'$ satisfying the following for each $i\in [\ell]$.
\stepcounter{propcounter}
\begin{enumerate}[label = {\bfseries \Alph{propcounter}\arabic{enumi}}]
\item\label{Q'1} $Q_i$ is a path of length at most $10m$ from $X$ to $F'_i\in \mathcal{F}'$.
\item\label{Q'2} $Q_1,\dots, Q_\ell$  are consecutive shortest paths from $X$ in $B_{G'}^{\sqrt{\log{n}}}(X)$.
\item\label{Q'3} $Q_i$ is disjoint from $W \cup \bigcup_{j \in [\ell]\setminus\{i\}} B^{\sqrt[10]{\log{n}}}_{G'}(V(F'_j))$ and it is a shortest path from $V(F_i')$ in $B^{\sqrt[10]{\log{n}}}_{G'}(V(F'_i))$.
\end{enumerate} 
Let 
\[Q = \bigcup_{i\in [\ell]} V(Q_i), \quad Q'=B^{\sqrt[10]{\log{n}}}_{G'}(Q) \quad\text{and}\quad W^*=\bigcup_{i\in [\ell]} B^{\sqrt[10]{\log{n}}}_{G'}(V(F'_i)),\]
then we have by \ref{F3} and \eqref{eq: G' max deg} that
\begin{gather*}\abs{Q} \leq d^2m,\quad
\abs{Q'}\le d^2m\cdot 2\Delta(G')^{\sqrt[10]{\log{n}}}\le \exp\bigl(\sqrt[9]{\log{n}}\bigr)\quad\text{and}\\
\abs{W^*} \leq \ell \exp\bigl(\sqrt[50]{\log{n}}\bigr)\cdot 2\Delta(G')^{\sqrt[10]{\log{n}}} \leq \exp\bigl(\sqrt[9]{\log{n}}\bigr).\end{gather*}
Let $\mathcal{F}''$ be the graphs in $\mathcal{F}'\setminus\{F'_1,\dots, F'_{\ell}\}$ which do not intersect with $Q'$. Then, letting
$U = \bigcup_{F\in \mathcal{F}''} V(F)$,
we have
\[\abs{U} \geq \abs{\mathcal{F}'}-\ell -\abs{Q'} \geq n^{0.97} - d^2 -  \exp(\sqrt[6]{\log{n}})\geq n^{0.9}.\]

We shall connect $X$ and $U$. First we expand $X$ as follows. As $\abs{X}\geq \eps d^2/10$, $\abs{L}\leq d$ and $\ell \leq \Delta d\le \frac{\abs{X}}{\log^8\abs{X}}$, \ref{Q'2} implies that we can apply Proposition~\ref{prop: expansion after deleting shortest path} with $X, Q, L$ playing the roles of $X, P, Y$ to obtain that 
\[\bigabs{B^{\sqrt{\log{n}}}_{G'-Q-W-W^*}(X)} = \bigabs{B^{\sqrt{\log{n}}}_{G'-Q}(X)}\geq \exp(\sqrt[8]{\log{n}}),\]
where the first equality follows from $X$ being far from $W\cup W^*$ owing to $X\cap W'=\varnothing$ and~\ref{F2}.

As 
\[\abs{L\cup Q\cup W\cup W^*} \leq 2\exp(\sqrt[9]{\log{n}})  < \frac{1}{4}\rho(\exp(\sqrt[8]{\log{n}}))\cdot \exp(\sqrt[8]{\log{n}})\]
and $\abs{U}\geq n^{0.9}$, Lemma~\ref{lem: path} implies that there exists a path of length at most $m$ between the sets $U$ and $B^{\sqrt{\log{n}}}_{G'-Q-W-W^*}(X)$ avoiding $L\cup Q\cup W\cup W^*$.
Let $Q'_{\ell+1}$ be a shortest such path with endvertices say $v\in B^{\sqrt{\log{n}}}_{G'-Q-W-W^*}(X)$ and $u\in U$. Let $F'_{\ell+1}\in \mathcal{F}''$ be the graph containing $u$. Appending a shortest path in $G'-Q$ from a vertex of $X$ to $v$ in $B^{\sqrt{\log{n}}}_{G'-Q}(X)$ to the path $Q'_{\ell+1}$, let the resulting path be $Q_{\ell+1}$.  The choices of $W^*$, $F_{\ell+1}'$ and the path $Q_{\ell+1}$ ensure that \ref{Q'1}--\ref{Q'3} hold for $Q_1,\dots, Q_{\ell+1}$.

\begin{figure}[ht]
\begin{center}
\begin{tikzpicture}[thick, scale=0.85]
\draw (2.5,4) ellipse [x radius=3, y radius=.75];
\node at (5.5,4) [anchor=south west] {$W$};
\draw[purple] (-1.5,2.5) .. controls (0,1.5) and (5.5,1.5) .. (6.5,3.5) coordinate[pos=.5] (w1);
\node[purple] at (6.5,3.5) [anchor=west] {$W'$};
\draw[dotted] (2.5,3.25) -- (w1) node [pos=.6, anchor=south west] {$\scriptstyle\sqrt{\log n}$};

\filldraw[cyan, fill=cyan!25!white] (0,0) circle (0.5);
\filldraw[cyan, fill=cyan!25!white] (2,0) circle (0.5);
\filldraw[cyan, fill=cyan!25!white] (4,0) circle (0.5);
\filldraw[cyan, fill=cyan!25!white] (6,0) circle (0.5);

\draw[ultra thick, blue] (0,-1) -- (6,-1);
\draw[ultra thick, blue] (0,1) -- (6,1);
\draw[ultra thick, blue] (6,1) arc [start angle=90, end angle=-90, radius=1];
\draw[ultra thick, blue] (0,1) arc [start angle=90, end angle=270, radius=1];
\node[blue] at (7,1) [anchor=south west] {$B_{G'-Q-W-W^*}^{\sqrt{\log n}}(X)$};
\draw[blue,->] (7.2,1.2) -- (6.8,.8);
\node[cyan] at (7.5,0) [anchor=west] {$X$};
\draw[cyan,->] (7.5,0) -- (6.6,0);

\filldraw[orange, fill=orange!25!white] (-1,-3) circle (0.5);
\node[orange] at (-1,-3.5) [anchor=north] {$F'_1$};
\filldraw[orange, fill=orange!25!white] (.5,-3) circle (0.5);
\node[orange] at (0.5,-3.5) [anchor=north] {$F'_2$};
\filldraw[orange, fill=orange!25!white] (2,-3) circle (0.5);
\node[orange] at (2,-3.5) [anchor=north] {$F'_3$};
\filldraw[orange, fill=orange!25!white] (3.5,-3) circle (0.5);
\node[orange] at (3.5,-3.5) [anchor=north] {$F'_4$};
\filldraw[orange, fill=orange!25!white] (5,-3) circle (0.5);
\node[orange] at (5,-3.5) [anchor=north east] {$F'_\ell$};
\draw[dotted, green!80!black] (5,-3.5) -- (5,-4.5) node [pos=.5, anchor=west] {$\!\!\scriptstyle\sqrt[10]{\log n}$};
\draw[ultra thick, dashed, green!80!black] (-1,-2) -- (5.5,-2);
\draw[ultra thick, dashed, green!80!black] (-1,-4.5) -- (5.5,-4.5);
\draw[ultra thick, dashed, green!80!black] (5.5,-2) arc [start angle=90, end angle=-90, radius=1.25];
\draw[ultra thick, dashed, green!80!black] (-1,-2) arc [start angle=90, end angle=270, radius=1.25];

\filldraw[orange, fill=orange!25!white] (8.5,-3) circle (0.5);
\filldraw[orange, fill=orange!25!white] (10,-3) circle (0.5);
\draw[ultra thick, purple, rounded corners] (10.5,-3.75) -- (7.5,-3.75) -- (7.5,-2.25) -- (10.5,-2.25);
\node[orange] at (9.25,-4) [anchor=north] {$\mathcal F''$};
\draw[orange,->] (9.25,-4) -- (9.25,-3);
\draw[purple,<-] (10,-2) -- (10,-1.5) node [anchor=south] {$U$};

\path (6,0) ++(-45:1) coordinate (q1);
\draw[red] plot [smooth] coordinates {(q1) (7,-1.25) (8,-1.5) (8.5,-2.25)};
\node[red, anchor=south] at (7.5,-1.4) {$Q'_{\ell+1}$};

\draw[purple!50!blue] (6,-.5) .. controls (6,-1.5) and (5,-1.5) .. (5,-2.5) node[pos=.4, anchor=west] {$Q_\ell$};
\draw[purple!50!blue] (2,-.5) .. controls (2.5,-1.5) and (2,-1) .. (2,-2.5) node[pos=.08, anchor=west] {$Q_3$};
\filldraw[white] (2.1,-1.3) rectangle (2.2,-1.5);
\draw[purple!50!blue] (-120:.5) .. controls (-120:1.5) and (0,-1.5) .. (-1,-2.5) node[pos=.6, anchor=east] {$Q_1$};
\draw[purple!50!blue] (0,-.5) .. controls (-.5,-1.5) and (.75,-1.5) .. (.5,-2.5) node[pos=.6, anchor=west] {$Q_2$};
\draw[purple!50!blue] (-50:.5) .. controls (-50:1) and (3,-1.5) .. (3.5,-2.5) node[pos=.7, anchor=west] {$Q_4$};
\end{tikzpicture}
\end{center}
\caption{The proof of Claim \ref{cl-many-nakjis}.}
\end{figure}
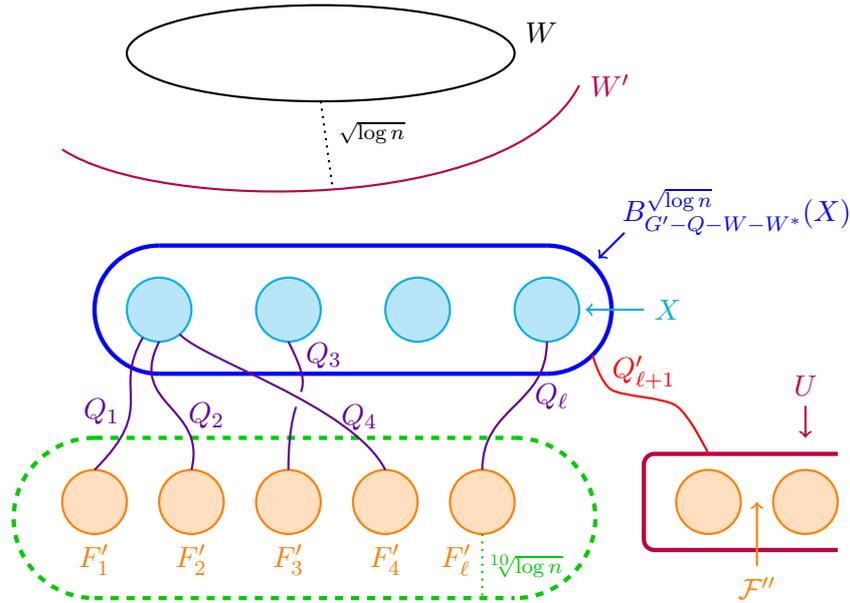

Repeating this for $\ell=0,\dots, d\Delta$, we obtain paths $Q_1,\dots, Q_{d\Delta}$. Recall that $\abs{\cF_0}=d$ and so by the pigeonhole principle, there exists a graph $F_0\in \mathcal{F}_0$ such that at least $\Delta$ paths among $\{Q_1,\dots, Q_{\Delta d}\}$ are incident to $F_0$. Relabelling and keeping the relative ordering, let those paths be $Q_1,\dots, Q_{\Delta}$ connecting $F_0$ and $F'_{1},\dots, F'_{\Delta} \in \mathcal{F}''$. Let $C_{p+1,0} = F_0$ and $C_{p+1,i} = F'_{i}$ for each $i\in [\Delta]$. 

Now fix $i\in [\Delta]$. By \ref{Q'3} and the definition of $\cF''\ni F'_i$,  $B^{\sqrt[10]{\log{n}}}_{G'}(V(F'_i))$ is disjoint from $Q\setminus V(Q_i)$ and $W$, and furthermore, by  Claim~\ref{cl: one expand}, we can find a connected subgraph $S_{p+1,i} \subseteq B^{\sqrt[10]{\log{n}}}_{G'}(V(F'_i))$ containing $V(F'_i)$ which satisfies~\ref{C2}. Let $v_{p+1,i}\in S_{p+1,i}$ be its first contact point with $Q_i$, and let $P_{p+1,i}=Q_i-S_{p+1,i}\setminus\{v_{p+1,i}\}$ be the truncated path. It is routine to check that \ref{C1}--\ref{C4} still hold with the additions of $C_{p+1,0}$, and $C_{p+1,i}\subseteq S_{p+1,i}$ and $P_{p+1,i}$, $i\le [\Delta]$. This contradicts the maximality of $p$. Hence, we have $p\geq d$, proving the claim.
\end{poc}

\subsection{The finishing blow}\label{sec-nakji-subd}
It is time now to complete the game: we will wire nakjis together to build an $H$-subdivision.

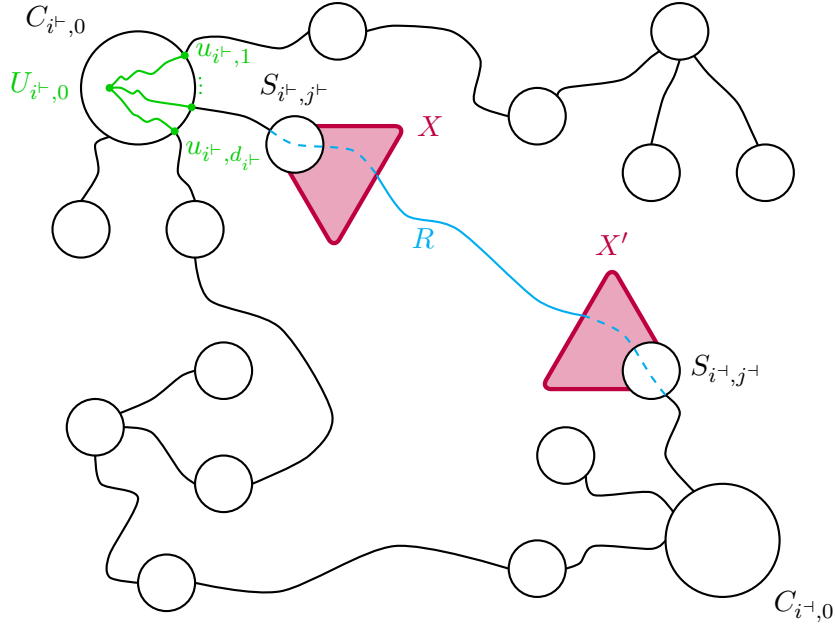
\begin{figure}[ht]
\begin{center}
\begin{tikzpicture}[thick, scale=0.75]
% X and X' (drawn first so as not to overlap circles)
\filldraw[purple, ultra thick, fill=purple!35!white, rounded corners] (2.75,-1) ++(150:.65) -- ++(2.5,0) node [anchor=west] {$X$} -- ++(-120:2.5) -- cycle; 
\filldraw[white] (2,-.5) circle (0.5);
\filldraw[purple, ultra thick, fill=purple!35!white, rounded corners] (9,-5) ++(-30:.65) -- ++(-2.5,0) -- ++(60:2.5) node [anchor=south] {$X'$} -- cycle;
\filldraw[white] (9.5,-5.5) circle (0.5);

%start nakji
\draw (0,0) circle (1);
\draw (3.5,1) circle (0.5);
\filldraw[fill=white] (2.75,-1) circle (0.5);
\path (2.75,-1) ++(150:.5) coordinate (u2);
\draw (1,-2.5) circle (0.5);
\draw (-1,-2.5) circle (0.5);
\node at (135:1) [anchor=south east] {$C_{i^{\vdash},0}$};
\node at (2.75,-.5) [anchor=south] {$S_{i^\vdash,j^\vdash}$};
\draw plot [smooth] coordinates {(35:1) (1,.85) (1.5,.9) (2.5,.7) (3,1)};
\draw plot [smooth] coordinates {(-20:1) (1.85,-.5) (u2)};
\draw plot [smooth] coordinates {(-50:1) (.75,-1) (.7,-1.5) (.9,-1.75) (1,-2)};
\draw plot [smooth] coordinates {(-120:1) (-.75,-1) (-.7,-1.5) (-1,-2)};
\draw [dotted, green!80!black] (10:1.1) arc [start angle=10, end angle=-10, radius=1.1];

% inner paths
\node[green!80!black] at (-1,0) [anchor=east] {$U_{i^\vdash,0}$};
\filldraw [green!80!black] (-.5,0) circle (0.05);
\filldraw [green!80!black] (35:1) circle (0.05) node [anchor=west] {$u_{i^\vdash,1}$};
\filldraw [green!80!black] (-20:1) circle (0.05);
\filldraw [green!80!black] (-50:1) circle (0.05) node [anchor=north west] {$u_{i^\vdash,d_{i^\vdash}}$};
\draw[green!80!black] plot [smooth] coordinates {(-.5,0) (-.4,.1) (-.3,.2) (-.2,.15) (0,.3) (0.25,.25) (0.5,0.45) (35:1)};
\draw[green!80!black] plot [smooth] coordinates {(-.5,0) (-.3,0) (-.1,-.1) (0,-.05) (0.15,-.2) (0.35,-.25) (0.7,-.3) (-20:1)};
\draw[green!80!black] plot [smooth] coordinates {(-.5,0) (-.3,-.2) (-.1,-.45) (0,-.5) (0.15,-.6) (0.35,-.55) (-50:1)};

% end nakji
\draw (10.25,-8) circle (1);
\draw (7,-8.5) circle (.5);
\draw (7.5,-6.5) circle (.5);
\filldraw [fill=white] (9,-5) circle (.5);
\path (9,-5) ++(-60:.5) coordinate (xx1);
\path (10.25,-8) ++(120:1) coordinate (uu1);
\path (10.25,-8) ++(150:1) coordinate (uu2);
\path (7.5,-6.5) ++(-45:.5) coordinate (uu3);
\draw plot [smooth] coordinates {(uu1) (9.25,-6.5) (9.5,-5.75) (xx1)};
\draw plot [smooth] coordinates {(9.25,-8) (9,-8.15) (8,-8.1) (7.75,-8.4) (7.5,-8.5)};
\draw plot [smooth] coordinates {(uu2) (9,-7.15) (8,-7.2) (uu3)};
\node at (9.5,-5) [anchor=west] {$S_{i^\dashv,j^\dashv}$};
\path (10.25,-8) ++(-45:1) node [anchor=north west] {$C_{i^{\dashv},0}$};

% path to end nakji
\path (2.75,-1) ++(150:.65) -- ++(2.5,0) -- ++(-120:1) coordinate (x1);
\draw[cyan,dashed] plot [smooth] coordinates {(u2) (2.75,-1) (3.75,-1.1) (x1)};
\path (9,-5) ++(-30:.65) -- ++(-2.5,0) -- ++(60:1.5) coordinate (xx2); 
\draw[cyan,dashed] plot [smooth] coordinates {(xx1) (9,-5.1) (8.5,-4.35) (xx2)};
\draw[cyan] plot [smooth] coordinates {(x1) (4.7,-2.25) (5.6,-2.5) (7,-3.75) (xx2)};
\node[cyan] at (5,-2.65) {$R$};

% additional nakjis
\draw (7,-.5) circle (.5);
\draw (9,-1.5) circle (.5);
\draw (11,-1.5) circle (.5);
\draw (9.5,1) circle (.5);
\draw plot [smooth] coordinates {(4,1) (4.5,1.1) (5.5,1) (6,.5) (5.9,-.25) (6.5,-.5)};
\path (9.5,1) ++(-60:.5) coordinate (y1);
\path (11,-1.5) ++(105:.5) coordinate (y2);
\path (9.5,1) ++(-135:.5) coordinate (y3);
\path (7,-.5) ++(45:.5) coordinate (y4);

\draw plot [smooth] coordinates {(y1) (10,0.25) (10.5,-.25) (10.75,-.75) (y2)};
\draw plot [smooth] coordinates {(9.5,.5) (9.4,.2) (9.2,-.2) (9,-1)};
\draw plot [smooth] coordinates {(y3) (9,0.55) (8.5,.15) (7.5,0) (y4)};

\draw (1.5,-5) circle (.5);
\draw (-.75,-6) circle (.5);
\draw (1.5,-7) circle (.5);
\draw (.5,-8.75) circle (.5);

\path (-.75,-6) ++ (30:.5) coordinate (z1);
\draw plot [smooth] coordinates {(1,-3) (1.25,-3.75) (2.5,-4.25) (3.4,-5.75) (3,-6.5) (2,-7)};
\draw plot [smooth] coordinates {(-.25,-6) (.25,-6.1) (.75,-6.8) (1,-7)};
\draw plot [smooth] coordinates {(z1) (.25,-5.5) (.75,-5.1) (1,-5)};
\draw plot [smooth] coordinates {(-.75,-6.5) (-.65,-7) (0,-7.25) (-.5,-8.25) (-.5,-8.65) (0,-8.75)};
\draw plot [smooth] coordinates {(1,-8.75) (2.5,-8.9) (4.5,-8.1) (6.5,-8.5)};
\end{tikzpicture}
\end{center}
\caption{Connecting nakjis}
\end{figure}

Enumerate the vertices of $H$ as $x_1,\dots, x_{h}$ and edges of $H$ as $e_1=x_{s_1}x_{t_1},\dots, e_{h'}=x_{s_{h'}}x_{t_{h'}}$. Each of the nakjis guaranteed in Claim~\ref{cl-many-nakjis} corresponds to a vertex of $H$. We also give an ordering on each nakji's $\Delta$ legs: let $f_x$, $x\in V(H)$, be such that 
\[\left\{f_x(e): e\in E(H), x\in e\right\} = [d_H(x)].\]

Assume we have pairwise disjoint paths $R_1,\dots, R_{\ell}$ for some $0\leq \ell < h'$ satisfying the following for each $i\in [\ell]$, where $(i',j') = (s_{i},f_{s_{i}}(e_i))$ and $(i'',j'')=(t_i, f_{t_i}(e_i))$.
\stepcounter{propcounter}
\begin{enumerate}[label = {\bfseries \Alph{propcounter}\arabic{enumi}}]
	\item\label{R1} $R_i$ is a path between $v_{i',j'}$ and $v_{i'',j''} $  with length at most $10m$.

\item\label{R2} $R_i$ does not intersect with any $C_{i^*,0}$ for $i^*\in [h]$ and does not intersect with any $S_{i^*,j^*}\cup P_{i^*,j^*}$ for $(i^*,j^*)\in [h]\times[\Delta] \setminus \{(i',j'), (i'',j'') \}$.
\end{enumerate}
Let $(i^{\vdash},j^{\vdash})=(s_{\ell+1},f_{s_{\ell+1}}(e_{\ell+1}))$ and $(i^{\dashv},j^{\dashv})=(t_{\ell+1}, f_{t_{\ell+1}}(e_{\ell+1}))$. 
Let 
\[Y= \bigcup_{(i,j)\in [h]\times[\Delta] \setminus \{(i^{\vdash},j^{\vdash}), (i^{\dashv},j^{\dashv})\}} S_{i,j} \cup \bigcup_{i\in [h]} C_{i,0}.\]
Then by \ref{F3},~\ref{C2} and $d\le m^{100}$, we have 
$\abs{Y} \leq (\Delta +1) d \exp(\sqrt[50]{\log{n}})\leq \exp(\sqrt[49]{\log{n}}).$
Let  
\[R= \bigcup_{(i,j)\in [h]\times [\Delta]}V(P_{i,j})\cup \bigcup_{i\in [\ell]} V(R_i), \enspace \quad X=B^{\frac{1}{2}\sqrt{\log{n}}}_{G'-R}(S_{i^{\vdash},j^{\vdash}}) \enspace \quad \text{and} \quad\enspace X'=B^{\frac{1}{2}\sqrt{\log{n}}}_{G'-R}(S_{i^{\dashv},j^{\dashv}}).\]
As $\abs{R\cup L}\leq d m^2$, then by~\ref{C2} we can apply Proposition~\ref{prop: expansion after deleting shortest path} to $X$ and $X'$, with $R\cup L$ playing the role of $Y$ to obtain that 
\[\abs{X},\abs{X'} \geq \exp(\sqrt[10]{\log{n}}).\]
By \ref{F2} and~\ref{C1},~\ref{C2}, we know that $X,X'$ does not intersect with $Y$. As $\abs{Y\cup R\cup L} \leq 2\exp(\sqrt[49]{\log{n}}) <\frac{1}{4}\rho(\abs{Z})\abs{Z}$ for each $Z\in \{X,X'\}$, by Lemma~\ref{lem: path}, we can find a path $R$ from $X$ to $X'$ in $G'-R-Y$ with length at most $m$. As $S_{i,j}\subseteq B_{G'}^{\sqrt[10]{\log{n}}}(C_{i,j})$, we know $G[S_{i,j}]$ is connected due to~\ref{C2}, and $C_{i,j}$ is an $(\eps_1,\eps_2d)$-robust-expander with diameter at most $m$ due to~\ref{F1} and~\ref{C1}, using the definition of $X,X'$, we can extend $R$ inside $X,X'$ to get a path from $v_{i^{\vdash},j^{\vdash}}$ to $v_{i^{\dashv},j^{\dashv}}$ satisfying \ref{R1} and \ref{R2}. Repeating this for each $\ell=0,\dots, h'-1$, we obtain paths $R_1,\dots, R_{h'}$ satisfying \ref{R1} and \ref{R2}.

Note that for each $i\in [h']$ and $(i',j')=(s_i,f_{s_i}(e_i)), (i'',j'')=(t_i, f_{t_i}(e_i))$, the path $R_i$ and $P_{i',j'}\cup P_{i'',j''}$ might intersect at their interiors. However, $R_i\cup P_{i',j'}\cup P_{i'',j''}$ contains a path $P^*_{e_i}$ from $v_{i',j'}$ to $v_{i'',j''}$.

Moreover, \ref{C3} and \ref{R2} imply that each $P^*_{e_i}$ only intersects $C_{s_i,0}$ and $C_{t_i,0}$ at $u_{i',j'}$ and $u_{i'',j''}$ respectively, and  $P^*_{e_1},\dots, P^*_{e_{h'}}$ are all pairwise vertex disjoint.
Let $P^* = \bigcup_{i\in [h']} V(P^*_{e_{i}})$.

Now, for each $i\in [h]$, we consider $C_{i,0}$ and set $d_i= d_H(x_i)$.
Note that $C_{i,0}$ intersects with $P^*$ only at the distinct vertices $u_{i,1},\dots, u_{i,d_i}$.
We choose a vertex $u_{i,0} \in C_{i,0}\setminus\{u_{i,1},\dots, u_{i,d_i}\}$. As $G'[C_{i,0}]$ is $\eps^2 \nu d$-connected with $\eps^2 \nu d> \Delta$, we can find a subdivision of $K_{1,d_i}$ inside $C_{i,0}$, where $u_{i,0}$ corresponds to the centre of the star and the leaves correspond to $u_{i,1},\dots, u_{i,d_i}$.
These subdivisions of stars together with the paths $P^*_{e_i}$ all together yield a subdivision of $H$ in $G$. This provides a final contradiction and completes the proof.

\section{Concluding remarks}\label{sec: rmk}

\subsection{Bounded degree planar graphs as subdivision}
We might ask whether we can in fact guarantee subdivisions rather than minors in Theorem~\ref{planar}. Our methods for finding subdivisions only cover bounded-degree graphs, however. So a natural question might be: what is the best constant $c$ such that every graph of average degree $(c+o(1))t$ contains a subdivision of every bounded-degree planar graph of order $t$?

First, it is easy to see that we can achieve $c=5/2$: a planar graph with $t$ vertices has fewer than $3t$ edges, and any graph has a bipartite subgraph with at least half the edges, so after subdividing at most $3t/2$ edges we obtain a bipartite subdivision with at most $5t/2$ vertices. Theorem \ref{thm: main} then allows us to find a subdivision of this subdivision in any graph of average degree $(5/2+o(1))t$. In fact we can do better.
\begin{lemma}\label{lem:planar}Any planar graph $H$ on $t$ vertices has a bipartite subdivision with at most $2t-2$ vertices.
\end{lemma}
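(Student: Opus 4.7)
The plan is to reduce the problem to a max-cut style question and then invoke the four-colour theorem. The key observation is: if $\phi\colon V(H)\to\{0,1\}$ is any 2-colouring and $S\subseteq E(H)$ is the set of \emph{monochromatic} edges (those with $\phi(u)=\phi(v)$), then subdividing each edge of $S$ exactly once yields a bipartite graph, by assigning each new subdivision vertex the opposite colour to its two neighbours. The resulting graph has $t+|S|$ vertices, so it suffices to find a 2-colouring of $V(H)$ with at most $t-2$ monochromatic edges.

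To find such a colouring, I would apply the four-colour theorem to obtain a proper vertex colouring $V(H)=V_1\cup V_2\cup V_3\cup V_4$ with each $V_i$ independent. Consider the three 2-colourings obtained by the partitions
\[
\{V_1\cup V_2,\,V_3\cup V_4\},\quad \{V_1\cup V_3,\,V_2\cup V_4\},\quad \{V_1\cup V_4,\,V_2\cup V_3\}.
\]
Every edge of $H$ runs between two distinct classes $V_i,V_j$, and it is monochromatic in exactly one of these three 2-colourings (the one that pairs $V_i$ with $V_j$). Therefore the total number of monochromatic edges summed over the three 2-colourings equals $e(H)$, and by averaging some 2-colouring has at most $e(H)/3$ monochromatic edges. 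Since $H$ is planar with $t\ge 3$ vertices, Euler's formula gives $e(H)\le 3t-6$, so this colouring has at most $t-2$ monochromatic edges, as required. (The small cases $t\le 2$ are trivial: $H$ has at most one edge and is already bipartite.)

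There is no substantial obstacle—this is really just an elementary averaging argument on top of the four-colour theorem and Euler's bound. The only thing one should verify carefully is the bipartiteness of the subdivision after colouring the new vertices, but that is immediate from the construction.
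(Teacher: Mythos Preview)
Your proof is correct and takes a genuinely different route from the paper's. The paper reduces to the maximal planar case, passes to the planar dual $H^*$ (a bridgeless cubic graph on $2t-4$ vertices), invokes Petersen's theorem to find a perfect matching in $H^*$, and subdivides the corresponding $t-2$ edges of $H$; a parity argument counting faces inside each cycle shows the result is bipartite. Your argument instead combines the four-colour theorem with the observation that the three pairings of the four colour classes partition $E(H)$, so averaging together with Euler's bound $e(H)\le 3t-6$ gives a $2$-colouring with at most $t-2$ monochromatic edges. Your route is cleaner in that bipartiteness is immediate from the explicit $2$-colouring, and it actually yields at most $t+\lfloor e(H)/3\rfloor$ vertices, strictly better than $2t-2$ for non-maximal $H$. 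The paper's route, however, rests on the far lighter Petersen theorem rather than the four-colour theorem, produces an explicit set of exactly $t-2$ edges, and ties in naturally with the optimality remark that one must subdivide at least one edge per face.
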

\begin{proof}We may assume $H$ is maximal planar, so every face is a triangle and there are $2t-4$ faces. Consider the dual graph $H^*$. This is a $2$-connected $3$-regular graph, and so, by Petersen's theorem, has a $1$-factor.
Each edge of this $1$-factor corresponds to an edge of the original graph $H$. We subdivide each of these edges once. Suppose there is an odd cycle $C$ in the original graph $H$. By double-counting edges bordering faces surrounded by $C$, there are an odd number of such faces. Thus the $1$-factor in $H^*$ contains an odd number of edges crossing $C$. Thus we have subdivided an odd number of edges of $C$, so $C$ becomes an even cycle in the subdivision. By exactly the same argument, we can see that any even cycle in $H$ remains even in the subdivision, so the subdivision is bipartite. It has exactly $\abs{H}+\abs{H^*}/2=2t-2$ vertices, as required.
\end{proof}
Lemma \ref{lem:planar} is best possible, since in any maximal planar graph we must subdivide at least one edge of each face, and this requires at least $t-2$ extra vertices. Together with Theorem \ref{thm: main}, it immediately gives the following.
\begin{prop}\label{planar-subdiv}For given $\eps>0$ and $\Delta\in \mathbb{N}$, there exists $d_0$ such that if $d\geq d_0$ and $H$ is a planar graph with at most $(1-\eps)d$ vertices and $\Delta(H)\leq \Delta$, and $G$ is a graph with average degree at least $2d$, then $G$ contains a subdivision of $H$.\end{prop}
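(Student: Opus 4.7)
The approach is to chain Lemma~\ref{lem:planar} with Theorem~\ref{thm: main}, rescaling the average-degree parameter by a factor of two. Given a planar graph $H$ with $\abs{H}\le(1-\eps)d$ and $\Delta(H)\le\Delta$, I would first invoke Lemma~\ref{lem:planar} to obtain a bipartite subdivision $H'$ of $H$ on at most $2\abs{H}-2$ vertices. A subdivision of a planar graph is planar, and subdividing edges only inserts vertices of degree two, so $\Delta(H')\le \Delta':=\max(\Delta,2)$. In particular the natural target is to find a subdivision of $H'$ in $G$, since any such copy, after contracting the degree-two vertices introduced by the bipartite subdivision, yields an $H$-subdivision.

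Next I would check that $H'$ satisfies all hypotheses of Theorem~\ref{thm: main} when we take $2d$ as the average-degree parameter there. The vertex count is fine:
\[
\abs{H'}\le 2\abs{H}-2\le 2(1-\eps)d-2 < (1-\eps)(2d),
\]
and $H'$ is bipartite and of maximum degree at most $\Delta'$ by construction. For $\alpha$-separability, note that $H'$ lies in a nontrivial minor-closed class (planar graphs), so by the planar separator theorem it is $O(\abs{H'}^{-1/2})$-separable; choosing $d$ large enough forces this constant below any prescribed $\alpha_0=\alpha_0(\eps,\Delta')$. Concretely, one sets $d_0:=\max\bigl\{d_0^{\mathrm{main}}(\eps,\Delta')/2,\, d_{\mathrm{sep}}\bigr\}$, where $d_0^{\mathrm{main}}$ is the threshold produced by Theorem~\ref{thm: main} at maximum degree $\Delta'$ and separability $\alpha<\alpha_0(\eps,\Delta')$, and $d_{\mathrm{sep}}$ is large enough that every planar graph on at least $2d_{\mathrm{sep}}-2$ vertices is $\alpha_0$-separable.

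With these choices in place, Theorem~\ref{thm: main} applied to $H'$ inside $G$ (whose average degree is at least $2d\ge d_0^{\mathrm{main}}$) delivers an $H'$-subdivision in $G$, which is the desired $H$-subdivision. There is no genuine obstacle beyond bookkeeping: the whole content of the proposition is the marriage of Lemma~\ref{lem:planar}'s tight bipartite-subdivision bound with Theorem~\ref{thm: main}'s asymptotically optimal embedding result, and the factor of two in the conclusion is exactly the factor of two lost by passing from $H$ to the bipartite subdivision $H'$.
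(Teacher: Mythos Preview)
Your approach matches the paper's exactly: the paper simply states that Lemma~\ref{lem:planar} ``together with Theorem~\ref{thm: main}, immediately gives'' Proposition~\ref{planar-subdiv}, without spelling out details, so your write-up is already more thorough than what appears there.

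One small bookkeeping point worth patching: your separability step assumes $\abs{H'}$ is large enough for the planar separator theorem to yield $\alpha$-separability with $\alpha<\alpha_0$, but the hypothesis only gives $\abs{H}\le(1-\eps)d$, not a lower bound on $\abs{H}$. If $\abs{H}$ stays bounded while $d$ grows, $H'$ need not be $\alpha$-separable for any $\alpha<\alpha_0$, so your choice of $d_{\mathrm{sep}}$ does not actually cover this case. The fix is routine---either pad $H'$ with enough isolated vertices to bring its order close to $(1-\eps)(2d)$ (the padded graph is still planar and bipartite, a subdivision of it contains one of $H'$, and now its $\alpha$-separability genuinely follows from $d$ being large), or dispose of the bounded-$\abs{H}$ regime directly via $d_{\sfT}(K_t)=O(t^2)$.
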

Our lower bound from Theorem~\ref{planar} shows only that we cannot improve the constant $2$ in Proposition \ref{planar-subdiv} below $3/2$. However, no similar example will give a stronger lower bound, for the following reason. Any $t$-vertex planar graph $H$ has a bipartite subdivision with at most $3t/4$ vertices in one part (to see this, take a largest independent set $X$ and subdivide all edges which do not meet $X$; this is bipartite with one part being $V(H)\setminus X$, which has size at most $3t/4$ by the four-colour theorem), and this subdivision is a subgraph of $K_{3t/4,n}$ for sufficiently large $n$.
\begin{problem}What is the right value for the constant in Proposition \ref{planar-subdiv}? Is it $2$, $3/2$, or something in between?\end{problem}
\begin{problem}What can we say about $d_{\sfT}(H)$ for $k$-degenerate graphs $H$ for general $k\in\mathbb{N}$?\end{problem}

\subsection{Better bounds for minor closed families}

As $\alpha_2(F) \geq 2t/\chi(F)$, writing $\chi(\cF)=\max\{\chi(F):~F\in\cF\}$, Theorem~\ref{thm-minor-closed} implies the following Erd\H{o}s-Simonovits-Stone type bound for any minor-closed family $\cF$:
\begin{equation}\label{eq: ESS}
d_{\succ}(\cF,t)\leq 2\left(1-\frac{1}{\chi(\cF)}+o(1)\right)t.
\end{equation}
However, this is in general not tight. It is sharp for the disjoint union of cliques of order $\chi(\cF)$, if such a graph is in $\cF$. 

\begin{problem}
	Determine $d_{\succ}(\cF,t)$ for the nontrivial minor-closed family $\cF$. If $\cF$ is closed under disjoint union, do we have $d_{\succ}(\cF,t) = 2(1-1/\chi(\cF)+o(1))t$?
\end{problem}
In fact, the Hadwiger conjecture would imply $d_{\succ}(\cF,t) = 2(1-1/\chi(\cF)+o(1))t$ for $\cF$ closed under disjoint union.
To see this, consider the maximum $s$ such that $K_s\in \cF$. Then the Hadwidger conjecture would give $\chi(\cF) = s$. As the disjoint union of $K_s$ belongs to $\cF$, the above discussion implies $d_{\succ}(\cF,t) = 2(1-1/\chi(\cF)+o(1))t$. However, a minor-closed family does not have to be closed under disjoint union (for example, the class of graphs embeddable in a fixed surface other than the plane), and for such families we may have $d_{\succ}(\cF,t) < 2(1-1/\chi(\cF)-c)t$  for some absolute constant $c>0$, as we have seen in Theorem~\ref{planar}.

Another interesting question is to see when the upper and lower bounds arising from Theorem~\ref{thm-minor-closed} coincide, which motivates the following problem.
\begin{problem}
	For which graphs $G$ do we have $\abs{2\alpha(G) - \alpha_2(G)}=o(\abs{G})$? For which graphs $G$ do we have $\bigabs{\alpha(G)-\frac{\abs{G}}{\chi(G)}}=o(\abs{G})$? Do all minor closed families $\cF$ contain sufficiently large graphs with this property?
\end{problem}

The case $k=6$ of the Hadwiger conjecture, proved by Robertson, Seymour and Thomas \cite{RST93}, gives the above conclusion for any minor-closed family for which the minimal forbidden minors are all connected and include $K_6$. In particular, this applies to the linklessly embeddable graphs discussed in Section~\ref{sec-minor-closed}.
\begin{cor}The class $\mathcal L$ of linklessly embeddable graphs satisfies $d_{\succ}(\mathcal L,t)=(8/5+o(1))t$.
\end{cor}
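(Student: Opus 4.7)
The plan is to deduce the corollary by invoking Theorem~\ref{thm-minor-closed} with both bounds matching up at $8t/5$. The key input is that $\mathcal L$ has chromatic number exactly $5$, which follows from the combination of Robertson--Seymour--Thomas \cite{RST93} (giving $\chi(G)\le 5$ for every $K_6$-minor-free, hence every linklessly embeddable, graph) together with the fact that $K_5$ is itself linklessly embeddable.

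For the upper bound, I would start with an arbitrary $F\in\mathcal L$ on $t$ vertices. Take a proper $5$-colouring $c_1,\dots,c_5$ of $F$ and let $c_i,c_j$ be the two largest colour classes. Their union is an induced bipartite, hence $2$-chromatic, subgraph, so $\alpha_2(F)\ge\abs{c_i}+\abs{c_j}\ge 2t/5$ by averaging. Plugging this into the upper bound $d_{\succ}(F)\le 2t-\alpha_2(F)+o(t)$ of Theorem~\ref{thm-minor-closed} yields $d_{\succ}(F)\le(8/5+o(1))t$, uniformly over $F\in\mathcal L$ with $\abs F\le t$.

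For the matching lower bound, the obvious candidate is the extremal graph achieving small independence number inside $\mathcal L$: the disjoint union $F^*=\lfloor t/5\rfloor\cdot K_5$. Since each minimal forbidden minor of $\mathcal L$ (one of the seven graphs of the Petersen family) is connected, a disjoint union of copies of $K_5\in\mathcal L$ is again linklessly embeddable, so $F^*\in\mathcal L$. Clearly $\alpha(F^*)=\lfloor t/5\rfloor$, and the lower bound $2t-2\alpha(F)-O(1)$ of Theorem~\ref{thm-minor-closed} gives $d_{\succ}(F^*)\ge 2t-2t/5-O(1)=(8/5-o(1))t$.

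There is no real obstacle here beyond assembling the ingredients correctly; everything reduces to the Robertson--Seymour--Thomas theorem to pin down $\chi(\mathcal L)=5$ and to verifying that $\mathcal L$ is closed under disjoint union (so that the tight extremal example $\lfloor t/5\rfloor\cdot K_5$ actually lies in the class). Combining the two displayed inequalities then yields $d_{\succ}(\mathcal L,t)=(8/5+o(1))t$, as required.
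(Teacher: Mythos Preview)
Your proposal is correct and follows essentially the same approach as the paper. The paper packages the argument as a general principle---Hadwiger for $k=6$ gives $\chi(\mathcal L)=5$, and since the Petersen-family forbidden minors are connected the class is closed under disjoint union, so the bound $2(1-1/\chi(\mathcal L))t$ from \eqref{eq: ESS} is sharp---whereas you unpack this into explicit applications of the two inequalities in Theorem~\ref{thm-minor-closed}; the content is the same.
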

Since the $Y\Delta Y$-reducible graphs form a subfamily of $\mathcal L$ containing the extremal example $rK_5$, the corollary also applies to this class.

\section*{Acknowledgement}
We thank O-joung Kwon for bringing our attention to the graph class of bounded expansion.

\medskip
\noindent
{\footnotesize
\begin{tabular}{@{}lll}
		John Haslegrave, Hong Liu	&\ & 	Jaehoon Kim        \\
		Mathematics Institute	 &\ & Department of Mathematical Sciences	  		 	 \\
		University of Warwick 	  &\ &  KAIST\\
		UK &\ & South Korea                             			 \\
\end{tabular}}

\begin{flushleft}
	\textit{Email addresses}:
		\texttt{$\lbrace$j.haslegrave,~h.liu.9$\rbrace$@warwick.ac.uk, jaehoon.kim@kaist.ac.kr.}
\end{flushleft}

\newpage

\appendix

\section{Proof of the robust expander lemma, Lemma~\ref{lem-expander}}
We first set up some functions as follows. For $x\ge 1$, define
\[\gamma(x)=C\int_{x}^{\infty} \frac{\rho(u)}{u}\,\mathrm{d}u.\]
Note that $\gamma(x)$ is a decreasing function. We will make use of the following two inequalities:
\begin{equation}\label{eq-gamma1}
\gamma(1)=\gamma(t/5)=C\int_{t/5}^{\infty} \frac{\eps_1}{u\log^2(15u/t)}\,\mathrm{d}u=\frac{C\eps_1}{\log 3}=\delta<1;
\end{equation}
and, for $C'>1$ and $x\ge t/2$, since $\rho(x)$ is decreasing and $\rho(x)\cdot x$ is increasing for $x\ge t/2$,
\begin{equation}
\gamma(x)-\gamma(C'x)=C\int_{x}^{C'x} \frac{\rho(u)}{u}\,\mathrm{d}u\ge C\rho(C'x) \int_{x}^{C'x} \frac{1}{u}\,\mathrm{d}u=C\log C'\cdot \rho(C'x)
\ge\frac{C\log C'}{C'} \rho(x).\label{eq-rho}
\end{equation}
Furthermore, set
\[\phi(G)=d(G)[1+\gamma(\abs{G})].\]
We say that $G$ is $\phi$-\emph{maximal} if $\phi(G)=\max_{H\subseteq G}\phi(H)$.
\begin{claim}\label{prop-min-deg}
	If $G$ is $\phi$-maximal, then $d(G)=\max_{H\subseteq G}d(H)$ and $\delta(G)\ge d(G)/2$.
\end{claim}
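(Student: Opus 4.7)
The plan is to establish both assertions by a single contradiction argument based on $\phi$-maximality, exploiting the fact that the function $\gamma$ is decreasing so that passing to a subgraph can only increase the factor $1+\gamma(|G|)$.

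First I would dispose of the statement $d(G)=\max_{H\subseteq G} d(H)$. Suppose to the contrary there is a subgraph $H\subseteq G$ with $d(H)>d(G)$. Since $|H|\le|G|$ and $\gamma$ is decreasing, we have $\gamma(|H|)\ge \gamma(|G|)$, and therefore
\[
\phi(H)=d(H)\bigl[1+\gamma(|H|)\bigr]>d(G)\bigl[1+\gamma(|G|)\bigr]=\phi(G),
\]
contradicting $\phi$-maximality of $G$.

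For the minimum degree bound, I would take any vertex $v\in V(G)$ with $d_G(v)<d(G)/2$ and look at $H=G-v$. A direct computation gives
\[
2e(H)=2e(G)-2d_G(v)>2e(G)-d(G)=d(G)|G|-d(G)=d(G)(|G|-1),
\]
so $d(H)=2e(H)/(|G|-1)>d(G)$. This puts us back in the situation of the first part (with $|H|=|G|-1<|G|$, so again $\gamma(|H|)\ge\gamma(|G|)$), yielding $\phi(H)>\phi(G)$ and contradicting $\phi$-maximality. Hence every vertex satisfies $d_G(v)\ge d(G)/2$, i.e.\ $\delta(G)\ge d(G)/2$.

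Neither step looks technically delicate: the only ingredient beyond elementary arithmetic is the monotonicity $\gamma(|H|)\ge\gamma(|G|)$ whenever $|H|\le|G|$, which is immediate from the definition of $\gamma$ as the integral of a nonnegative function from $x$ to $\infty$. The harder work is presumably in later claims that actually construct a $\phi$-maximal subgraph and verify its robust expansion and connectivity; this particular claim is a clean warm-up whose only purpose is to record the two monotonicity properties of a $\phi$-maximiser.
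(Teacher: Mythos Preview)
Your proof is correct and essentially identical to the paper's: both use that $\gamma$ is decreasing to deduce $d(G)\ge d(H)$ from $\phi$-maximality, and both remove a vertex of degree below $d(G)/2$ to raise the average degree and obtain a contradiction. The only cosmetic difference is that the paper phrases the first part as a direct deduction rather than a contradiction.
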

\begin{poc}
	Since $\gamma(x)$ is decreasing, for any $H\subseteq G$, by the $\phi$-maximality of $G$, we have that $d(G)[1+\gamma(\abs{G})]\ge d(H)[1+\gamma(\abs{H})]$
	and consequently $d(G)\ge d(H)$. Suppose there is a vertex $v$ with $d(v)<d(G)/2$. Let $H:=G-v$, then 
	\[d(H)=\frac{d(G)\abs{G}-2d(v)}{\abs{G}-1}>d(G),\]
	a contradiction.
\end{poc}

Take $H\subseteq G$ such that $H$ is $\phi$-maximal. We will show that $H$ is the desired robust expander. By Claim~\ref{prop-min-deg}, $\delta(H)\ge d(H)/2$. Since $H$ is $\phi$-maximal and that $\gamma(x)$ is decreasing, we have
\[d(H)\ge \frac{d(G)(1+\gamma(\abs{G}))}{1+\gamma(\abs{H})}\ge \frac{d(G)}{1+\gamma(1)}\ge (1-\gamma(1))d(G)\stackrel{\eqref{eq-gamma1}}{=}(1-\delta)d(G).\]
We are left to show that $H$ has the claimed vertex and edge expansions. 

Note that, since $H$ is $\phi$-maximal, for any $K\subseteq H$,
\begin{equation}\label{eq-phi-max}
	d(K)\le \frac{1+\gamma(\abs{H})}{1+\gamma(\abs{K})}\cdot d(H)\le d(H).
\end{equation}

Fix an arbitrary $X\subseteq V(H)$ with $t/2\le \abs{X}\le \abs{H}/2$,\footnote{Note that if $\abs{H}<t$, then the lemma is vacuously true.} and an arbitrary subgraph $F\subseteq H$ with $e(F)\le d(H)\cdot  \rho(\abs{X})\cdot \abs{X}$. Let $Y=X\cup N_{H\setminus F}(X)$ and $\xc=V(H)\setminus X$.

For the robust vertex expansion, note that
\begin{align*}
	&d(H)(\abs{X}+\abs{\xc})=2e(H)\le 2e(H[Y])+2e(H[\xc])+2e(F)\\
	&\stackrel{\eqref{eq-phi-max}}{\le} d(H[Y])\abs{Y}+d(H)\abs{\xc}+2d(H)\cdot \rho(\abs{X})\cdot \abs{X}\\
	\implies&d(H)\abs{X}\le d(H[Y])\abs{Y}+2d(H)\cdot \rho(\abs{Y})\cdot \abs{Y}\stackrel{\eqref{eq-phi-max}}{\le} \left(\frac{1+\gamma(\abs{H})}{1+\gamma(\abs{Y})} +2\rho(\abs{Y})\right) d(H)\abs{Y}\\ 
	\implies&  \frac{\abs{X}}{\abs{Y}}\le \frac{1+\gamma(H)}{1+\gamma(\abs{Y})}+2\rho(\abs{Y}).
\end{align*}
Thus,
\[\frac{\abs{N_{H\setminus F}(X)}}{\abs{Y}}=1-\frac{\abs{X}}{\abs{Y}}\ge \frac{\gamma(\abs{Y})-\gamma(\abs{H})}{1+\gamma(\abs{Y})}-2\rho(\abs{Y})\stackrel{\eqref{eq-gamma1}}{\ge} \frac{\gamma(\abs{Y})-\gamma(\abs{H})}{2}-2\rho(\abs{Y}).\]

Now, if $\abs{Y}\ge 3\abs{H}/4$, then $\abs{N_{H\setminus F}(X)}\ge \abs{H}/4\ge \abs{X}/2\ge \rho(\abs{X})\cdot \abs{X}$, yielding the desired robust vertex expansion. If $\abs{Y}\le 3\abs{H}/4$, then applying~\eqref{eq-rho} with $C'=4/3$, we have, as $\gamma(x)$ is decreasing and $C\ge 30$, that
\begin{align*}
	\frac{\abs{N_{H\setminus F}(X)}}{\abs{Y}}&\ge \frac{\gamma(\abs{Y})-\gamma(\abs{H})}{2}-2\rho(\abs{Y})\ge \frac{\gamma(\abs{Y})-\gamma(4\abs{Y}/3)}{2}-2\rho(\abs{Y})\\
	&\ge \frac{C\log(4/3)}{8/3}\cdot \rho(\abs{Y})-2\rho(\abs{Y})\ge \rho(\abs{Y}).
\end{align*}
Thus, $\abs{N_{H\setminus F}(X)}\ge \rho(\abs{Y})\cdot \abs{Y}\ge \rho(\abs{X})\cdot \abs{X}$.

The fact that $H$ is $\nu d$-connected was shown in~\cite[Lemma~5.2(iv)]{KLShS17}. 

This finishes the proof of Lemma~\ref{lem-expander}.

\section{Balanced homomorphism  to  an odd cycle: proof of Lemma \ref{lem: H partition odd}}
As $H$ has bandwidth at most $\beta d$, there exists an ordering $x_1,\dots, x_{\abs{H}}$ of $V(H)$ such that $x_ix_j\in E(H)$ implies $\abs{i-j}\leq \beta d$. Let $A$ and $B$ be a bipartition of $H$. Let $t= \lceil \beta^{-1/2}\rceil$  and we divide the vertices of $H$ according to the ordering as follows: for each $i \in [t^2]$ and $C\in \{A,B\}$, let 
	\[Y^C_i = \{ x_j \in C : (i-1)\beta d < j \leq i\beta d\} \enspace \quad \text{and} \quad \enspace Y_i=Y_i^A\cup Y_i^B.\]
	Note that this guarantees that no edge of $H$ is between $Y_i$ and $Y_{j}$ with $\abs{i-j}>1$. For each $i\in [t]$ and $C\in \{A,B\}$, let 
	\[W_i^C = \bigcup_{j\in [r]} Y_{(i-1)t+j}^C \enspace\quad \text{and}\quad  \enspace  Z_i^C = \bigcup_{j \in [t]\setminus[r]} Y_{(i-1)t+j}^C.\]
	In the claim below, we will decide to which part $X_j$ we assign the vertices in $Z_i^C$. To make such an assignment possible while keeping the edges only between two consecutive parts, we allow the vertices in $W_i^C$ to be assigned to some other parts. As each set $W_i^C$ is much smaller than $Z_i^C$, the uncontrolled assignments of $W_i^C$	will not harm us too much. 
Indeed, the set $W=\bigcup_{i\in [t]}(W_i^A\cup W_i^B)$ has size at most $r \beta^{1/2} d < \delta^2 d/r$. 	Now, we will decide to which part $X_j$ we will assign $Z_i^C$.
For this, we partition the set $[t]$ into $I_1,\dots, I_r$ as follows where $I_{0}=I_r$.
If $i\in I_{\ell}$, then we will later assign the vertices in $Z_i^A$ to $X_{\ell}$ and the vertices in $Z_{i}^B$ to $X_{\ell+1}$. Note that the vertices in the set $\bigcup_{i\in I_{\ell}} Z_i^A \cup \bigcup_{i\in I_{\ell-1}} Z_i^B$ below will be later assigned to $X_{\ell}$.
	
	\begin{claim}
		There exists a partition $I_1,\dots, I_r$ of $[t]$ satisfying the following.
		\begin{itemize}
			\item For each $\ell \in [r]$, we have $\abs{ \bigcup_{i\in I_{\ell}} Z_i^A \cup \bigcup_{i\in I_{\ell-1}} Z_i^B} \leq \frac{(1-\delta^2)d}{r} $.
		\end{itemize}	
	\end{claim}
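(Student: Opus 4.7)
The plan is to prove this claim by a random-partition argument that mirrors, but is strictly simpler than, the corresponding step inside the proof of Lemma~\ref{lem: H partition even}. I would assign each index $i\in[t]$ independently and uniformly at random to one of the $r$ sets $I_1,\ldots,I_r$, so that $\Pr[i\in I_\ell]=1/r$ for each $\ell$.

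For a fixed $\ell\in[r]$, since the sets $\{Z_i^A:i\in[t]\}$ are pairwise disjoint (and likewise the $\{Z_i^B:i\in[t]\}$), linearity of expectation gives
\[\mathbb{E}\left[\Bigabs{\bigcup_{i\in I_\ell}Z_i^A\cup\bigcup_{i\in I_{\ell-1}}Z_i^B}\right]=\frac{\abs{\bigcup_iZ_i^A}+\abs{\bigcup_iZ_i^B}}{r}\leq\frac{\abs{V(H)}}{r}\leq\frac{(1-\delta)d}{r}.\]

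For concentration, I would apply Azuma's inequality to the Doob martingale revealing the $t$ assignments one index at a time. Each $Z_i^C$ has $\abs{Z_i^C}\leq t\cdot\beta d\leq 2\beta^{1/2}d$, so flipping the assignment of a single $i$ changes the displayed quantity by at most $O(\beta^{1/2}d)$. Azuma then yields deviation more than $\beta^{1/5}d$ with probability at most $2\exp(-\Omega(\beta^{-1/10}))$, which is $o(1/r)$ for $\beta$ small. A union bound over the $r$ choices of $\ell$ gives a positive-probability event in which every $\ell$ simultaneously satisfies
\[\Bigabs{\bigcup_{i\in I_\ell}Z_i^A\cup\bigcup_{i\in I_{\ell-1}}Z_i^B}\leq\frac{(1-\delta)d}{r}+\beta^{1/5}d\leq\frac{(1-\delta^2)d}{r},\]
where the final inequality follows from $\beta^{1/5}\leq\delta^2/(10r)$, valid under the hierarchy $\beta\ll 1/r,\delta$.

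There is no real obstacle: the claim is easier than its even-cycle analogue because an odd cycle has enough rotational freedom that we only need to balance a single combined sum per part, with no need to introduce auxiliary $J_\ell$-type sets to accommodate asymmetry between the two sides of the bipartition. The cleanest implementation is the uniform random assignment plus Azuma; an explicit round-robin construction would also work but is less transparent.
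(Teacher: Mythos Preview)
Your proposal is correct and follows essentially the same approach as the paper: the paper's proof also assigns each $i\in[t]$ uniformly at random to one of $I_1,\dots,I_r$ and invokes Azuma's inequality to obtain the same bound $\frac{|A|}{r}+\frac{|B|}{r}+\beta^{1/5}d\le\frac{(1-\delta^2)d}{r}$. You have simply filled in the Lipschitz-constant and union-bound details that the paper leaves implicit.
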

	\begin{poc}
		We add each $\ell\in [t]$ independently to one of $I_1,\dots, I_r$ uniformly at random. Standard concentration inequalities (e.g.\ Azuma's inequality) easily show that with positive probability, we can ensure that for each 
		$\ell \in [r]$ we have
		\begin{align*}
		\Bigabs{ \bigcup_{i\in I_{\ell}} Z_i^A \cup \bigcup_{i\in I_{\ell-1}} Z_i^B}
		\leq \frac{\abs{A}}{r} + \frac{\abs{B}}{r} + \beta^{1/5}d \leq \frac{(1-\delta)d}{r} + \beta^{1/5}d  \leq \frac{(1-\delta^2)d}{r},	
		\end{align*}
		as $\beta \ll \delta, 1/r\ll 1$.
	\end{poc}

	With these sets $I_1,\dots, I_r$, we distribute the vertices in $Y_1,\dots, Y_{t^2}$ in order as follows. 
	
		Assume that we have distributed the vertices in $Y_1,\dots, Y_{s t}$ for some $0\le s<t$ among the sets $X_1,\dots, X_r$ as follows.
	\stepcounter{propcounter}
	\begin{enumerate}[label = {\bfseries \Alph{propcounter}\arabic{enumi}}]
		\item\label{one} For each $\ell \in [r]$ and $i\in I_{\ell}\cap [s]$, we have 
		$Z_i^A\subseteq X_\ell$ and $Z_{i}^B\subseteq X_{\ell+1}$.
		\item\label{two} Each edge of $H[\bigcup_{i\in [s t]} Y_i]$ lies between $X_\ell$ and $X_{\ell+1}$ for some $\ell\in [r]$.
		\item\label{three} $Y^B_{s t}\subseteq X_{\ell'}$ for some $\ell'\in [r]$.
	\end{enumerate}
	For $s=0$, the trivial partition $X_1=\dots X_r=\varnothing$ trivially satisfies the above (\ref{three} is vacuous as $Y_0$ does not exist. For technical reason, we assume $\ell'=r$ in this case). 
	
	We write $X_{r+i}=X_i$ for each $i\in [r]$.
	Now we describe how one can distribute vertices in $Y_{s t+1},\dots, Y_{(s+1)t}$.
	Let $s+1\in I_{\ell}$ for some $\ell\in [r]$. 
	Let $\ell^*$ be a nonnegative integer less than $2r$ such that $\ell'+\ell^* \equiv \ell ~({\rm mod}~r)$ and $\ell^*$ is odd. Such an $\ell^*$ exists as $r$ is odd.
	
	We put $Y^A_{s t+ 1}, Y^B_{s t+1} , Y^{A}_{s t+2} , Y^{B}_{s t+2},\dots, Y^B_{s t+ (\ell^*-1)/2}$ to $X_{\ell'+1}, X_{\ell'+2}, X_{\ell'+3}, X_{\ell'+4},\dots ,X_{\ell-1}$ respectively. Now we put all the remaining vertices in $Z^A_{s+1}\cup W^A_{s+1}$ to $X_{\ell}$ and all the remaining vertices in $Z^B_{s+1} \cup W^B_{s+1}$ to $X_{\ell+1}$.
	Then \ref{one}--\ref{three} hold for $s+1$. 
	
	By repeating this, we can distribute all vertices in $Y_1,\dots, Y_{t^2}$ into $X_1,\dots, X_r$ satisfying \ref{one}--\ref{three}.
	Then for each $\ell\in [r]$, we have 
	\[\abs{X_\ell} \leq \Bigabs{ \bigcup_{i\in I_{\ell}} Z_i^A \cup \bigcup_{i\in I_{\ell-1}} Z_i^B} + \abs{ W} \leq \frac{(1-\delta^2)d}{r} + r \beta^{1/2} d \leq \frac{d}{r}.\]
	This with \ref{two} proves the lemma.

\section{Webs with disjoint interiors: proof of Claim~\ref{cl: webs}}

In this section, we prove Claim~\ref{cl: webs}. We will use \eqref{eq: dm100}--\eqref{eq: G-L average degree} in Section~\ref{sec: medium}.
We first prove that the following holds.
\begin{equation} \label{eq: disjoint stars}
\begin{minipage}{0.9\textwidth}
For any set $X$ of size at most $d m^{90}$, the graph $G-L-X$ contains vertex disjoint stars $S_{1},\dots, S_{m^{85}}$ where each $S_{i}$ has exactly $2d/m^3$ leaves.
\end{minipage}	
\end{equation}
Indeed, if we have $S_{1},\dots, S_{\ell}$ for some $0\leq \ell< m^{85}$, then the set $X':=\bigcup_{i\in [\ell]} V(S_{i})$ has size at most $2dm^{82}$. By \eqref{eq: G-L average degree}, and the fact that $\Delta(G-L)\leq dm^{10}$, we have 
\[d(G-L-X-X') \geq \frac{d/m^2(n-\abs{L}) - dm^{10}(\abs{X}+\abs{X'}) }{ n-\abs{L}- \abs{X}-\abs{X'}} \geq \frac{  d n /(2m^{2}) - d n /m^{8} }{ n/2 } \geq \frac{2d}{m^3}.\]
The penultimate inequality holds as $n\geq d m^{100} \geq m^{10}\abs{X}$ by \eqref{eq: dm100} and $n-\abs{L}\geq n/2$. Thus $G-L-X$ contains a star $S_{\ell+1}$ with $2d/m^3$ leaves disjoint from $X'$.

Now we prove the following.
\begin{equation} \label{eq: disjoint units}
\begin{minipage}{0.9\textwidth}
For any set $X$ of size at most $d m^{30}$, the graph $G-L-X$ contains vertex disjoint $(m^{10}, d/m^3, m+2)$-units $F_{1},\dots, F_{m^{30}}$. \end{minipage}	
\end{equation}

Indeed, if we have $F_1,\dots, F_{\ell}$ for some $0\leq \ell < m^{30}$, then the set $X':=\bigcup_{i \in [\ell]} V(F_i)$ has size at most $m^{30}( 2d/m^3\cdot m^{10}) \leq d m^{38}$ vertices and $\abs{X\cup X'}< dm^{90}$. Hence \eqref{eq: disjoint stars} implies that there are vertex disjoint $2d/m^3$-stars $S_{v_1},\dots, S_{v_{m^{40}}}, S_{u_1},\dots, S_{u_{m^{80}}}$, with centres $v_1,\dots, v_{m^{40}}, u_1,\dots, u_{m^{80}}$ respectively. 

Let $P_1,\dots, P_{s}$ be pairwise internally disjoint paths of length at most $m+2$ with $s< m^{70}$ where $P_i$ is a $v_{c_i},u_{d_i}$-path for each $i\in [s]$ and $d_1,\dots, d_s$ are all distinct and each $P_i$ does not contain any of $\{v_1,\dots, v_{m^{40}}, u_1,\dots, u_{m^{80}}\}$ as an internal vertex.
Let 
\[V':= \bigcup_{i\in [m^{40}]} (V(S_{v_i})\setminus\{v_i\}) \enspace \text{and} \enspace U':= \bigcup_{i\in [m^{80}]-\{d_1,\dots, d_s\}} (V(S_{u_i})\setminus\{v_i\}),\]
then we have $\abs{V'}= 2dm^{37}$ and $\abs{U'}= (m^{80}-m^{70})\cdot (2d/m^3) > d m^{37}$. Further set
\[P':= \bigcup_{i\in [s]} (V(P_i)\setminus\{v_{c_i}\}), \enspace \text{and} \enspace U=\{v_1,\dots, v_{m^{40}}, u_1,\dots, u_{m^{80}}\}.\]
Then we have $\abs{X} + \abs{X'} + \abs{P'} + \abs{U}  \leq dm^{30} + dm^{38} + 2m \cdot m^{70}  + m^{40}+ m^{80}  \leq 2d m^{40} \leq \rho( dm^{37}) \cdot dm^{47}/4$.
Hence, applying Lemma~\ref{lem: path} with $V',U', X\cup X' \cup P'\cup U$ playing the roles of $X_1,X_2, W$ respectively, we can find a path of length at most $m$ between a vertex in $V(S_{v_{d_{s+1}}}) \setminus\{v_{s+1}\} \subseteq V'$ and a vertex in  $V(S_{u_{d_{s+1}}}) \setminus \{u_{d_{s+1}}\} \subseteq U'$ avoiding vertices in $X\cup X'\cup P'\cup U$. This  yields a $v_{d_{s+1}},u_{d_{s+1}}$-path $P_{s+1}$, which is internally disjoint from $X\cup X'\cup P'\cup U$ and $d_{s+1}\notin \{d_1,\dots, d_{s}\}$. Hence, this is internally disjoint from $P_1,\dots, P_{s}$ and $U$.

By repeating this for $s=0,1,\dots, m^{70}$, we obtain $P_1,\dots, P_{m^{70}}$. By pigeonhole principle, at least $m^{10}$ of $v_{c_1},\dots, v_{c_{m^{70}}}$ coincide, so there exists a vertex $v_j$ and $m^{10}$ internally disjoint paths from $v_j$ to $V(S_{u'_1}),\dots, V(S_{u'_{m^{10}}})$ for pairwise disjoint stars $S_{u'_1},\dots, S_{u'_{m^{10}}}$ where each star has $2d/m^3 \geq d/m^3+1$ leaves. These paths and stars together yields a $(m^{10},d/m^3,m+2)$-unit $F_{\ell+1}$ in $G-L$ disjoint from $X\cup X'$. Repeating this for $\ell=0,1\dots, m^{30}$, yields that the graph $G-L-X$ contains vertex disjoint $(m^{10}, d/m^3, m+2)$-units $F_{1},\dots, F_{m^{30}}$. This proves \eqref{eq: disjoint units}.

Now we are ready to prove Claim~\ref{cl: webs}.
Assume we have $(m^2,m^{10},d/m^3, 4m)$-webs $W_1,\dots, W_{\ell}$ with pairwise disjoint interiors, where $0\leq \ell < 2d$. Let $X = \bigcup_{i\in [\ell]} \Int(W_i)$, then
\begin{equation}\label{eq: X dm13}
\abs{X} \leq 2d \cdot m^2\cdot m^{10} \cdot 10m \leq 20 d m^{13}.
\end{equation}
Apply \eqref{eq: disjoint units} with the above set $X$ to obtain pairwise vertex disjoint $(m^{10}, d/m^3, m+2)$-units $F_{v_1},\dots, F_{v_{m^{12} }}$ and $F_{u_1},\dots, F_{u_{m^{25}}}$ in $G-L-X$, where the core vertex of $F_w$ is $w$ for each $w\in U$ and $U=\{v_1,\dots, v_{m^{12}}, u_1,\dots, u_{m^{25}}\}$.
Let $I$ be the union of all these units' interiors, then $\abs{I}\leq m^{40}$.

Similarly as before, let $P_1,\dots, P_{s}$ be pairwise internally disjoint paths each of length at most $4m$ with $0\leq s < m^{15}$ in $G-L-X$, where $P_i$ is a $v_{c_i},u_{d_i}$-path for each $i\in [s]$ and $d_1,\dots, d_s$ are all distinct. Moreover, 
we have $P_i = P(F_{v_{c_i}},w_i) \cup P'_i \cup P(F_{u_{d_i}}, z_i)$ with $w_i\in \Ext(F_{v_{c_i}}), z_i\in \Ext(F_{u_{d_i}})$ where $P'_i$ is an $w_i,z_i$-path with length at most $m$ disjoint from $I \cup U$.
Let
\[P':= \bigcup_{i\in [s]} \Int(P_{i}),\]
then we have $\abs{P'}\leq 4 m\cdot m^{15}\leq  m^{17}$.
Let 
\[U':= \bigcup_{i\in [m^{25}]: \Int(F_{u_i})\cap P'=\varnothing} \Ext(F_{u_i}).\]
then $\abs{U'}= (m^{25}-\abs{P'})\cdot (2d/m^3) > d m^{18}$. 
We may assume each $v_{c_i}$ is the endvertex of at most $m^2$ paths among $P_1,\ldots, P_s$ as otherwise these paths together with the  correpsonding units $F_{u_{d_i}}$ form a desired web in $G-L-X$. Thus, for each $i\in[m^{12}]$, letting $A_i\subseteq \Ext(F_{v_i})$ be the union of the leaves of the stars whose corresponding paths in $\Int(F_{v_i})$ is internally disjoint from $P_1,\ldots,P_s$ and $V'=\bigcup_{i\in [s]}A_i$, we have $\abs{V'}\ge m^{12}(m^{10}-m^2)d/m^3\ge dm^{18}$.

Then by \eqref{eq: X dm13}, we have $\abs{X}  + \abs{P'} + \abs{U}+\abs{I}  \leq dm^{13} + m^{17}  + m^{12}+ m^{25} +m^{40} \leq 2d m^{13} \leq \rho( dm^{18}) \cdot dm^{18}/4$.
Hence, applying Lemma~\ref{lem: path} with $V',U', X \cup P'\cup U\cup I$ playing the roles of $X_1,X_2, W$ respectively, we can find a $w_{s+1},z_{s+1}$-path path $P'_{s+1}$ of length at most $m$ with $w_{s+1}\in \Ext(F_{v_{c_{s+1}}}), z_{s+1}\in \Ext(F_{u_{d_{s+1}}})$ with $d_{s+1}\notin \{d_1,\dots, d_s\}$.
Also $P'_{s+1}$ avoids the vertices in $X\cup P'\cup U\cup I$, and $P_{s+1} = P(F_{v_{c_{s+1}}},w_{s+1}) \cup P'_{s+1} \cup P(F_{u_{d_{s+1}}}, z_{s+1})$ is of length at most $4m$ and  internally disjoint from $P_1,\dots, P_{s}$. By repeating this for $s=0,1,\dots, m^{15}$,  we obtain $P_1,\dots, P_{m^{15}}$. By the pigeonhole principle, at least $m^{2}$ of $v_{c_1},\dots, v_{c_{m^{15}}}$ coincide, so by relabelling, there exist $m^{2}$ internally disjoint paths $P_1,\dots, P_{m^2}$ from $v_1$ to $u_{1},\dots, u_{m^2}$, respectively in $G-L-X$, which yield an $(m^2,m^{10},d/m^3,4m)$-web $W_{\ell+1}$ which is disjoint from $L\cup X$. Repeating this for $\ell=0,\dots, 2d$, we obtain the desired webs as in Claim~\ref{cl: webs}, finishing the proof.

\begin{thebibliography}{99}
\setlength{\parskip}{0pt}
\setlength{\itemsep}{0pt plus 0.3ex}
\footnotesize

\bibitem{AST90}N. Alon, P. Seymour and R. Thomas,
\newblock A separator theorem for nonplanar graphs.
\newblock \emph{J. Amer. Math. Soc.} \textbf{3}(4) (1990), 801--808.

\bibitem{BT98}B. Bollob\'as and A. Thomason,
\newblock Proof of a conjecture of Mader, Erd\H{o}s and Hajnal on topological complete subgraphs.
\newblock \textit{European J. Combin.} \textbf{19}(8) (1998), 883--887.

\bibitem{Bor84}O. V. Borodin,
\newblock Solution of the Ringel problem on vertex-face coloring of planar graphs and coloring of $1$-planar graphs. (in Russian)
\newblock \emph{Metody Diskret. Analiz.} \textbf{41} (1984), 12--26.

\bibitem{BPTW}J.~B\"ottcher, P.~Pruessmann, A.~Taraz and A.~W\"urfl, 
\newblock Bandwidth, expansion, treewidth, separators and universality for bounded-degree graphs. 
\newblock \emph{European J. Combin.} \textbf{31} (2010), 1217--1227.

\bibitem{CH63}K. Corr\'adi and A. Hajnal,
\newblock On the maximal number of independent circuits in a graph. 
\newblock \emph{Acta Math. Acad. Sci. Hungar.} \textbf{14} (1963), 423--439.

\bibitem{DemHaj}E. D. Demaine and M. Hajiaghayi,
\newblock Linearity of grid minors in treewidth with applications through bidimensionality. 
\newblock \emph{Combinatorica} \textbf{28}(1) (2008), 19--36.

\bibitem{Dji84}H. N. Djidjev,
\newblock On some properties of nonplanar graphs.
\newblock \emph{C. R. Acad. Bulgare Sci.} \textbf{37}(9) (1984), 1183--1184.

\bibitem{Duff}R. J. Duffin, 
\newblock Topology of series-parallel networks. 
\newblock \emph{J. Math. Anal. Appl.} \textbf{10} (1965), 303--318.

\bibitem{DN16} Z. Dvo\v{r}\'{a}k and S. Norin, 
\newblock Strongly sublinear separators and polynomial expansion. 
\newblock \emph{SIAM J. Discrete Math.} \textbf{30}(2) (2016), 1095--1101.

\bibitem{ESim}P. Erd\H{o}s and M. Simonovits,
\newblock A limit theorem in graph theory. 
\newblock \emph{Studia Sci. Math. Hungar.} \textbf{1} (1966), 51--57.

\bibitem{ESto}P. Erd\H{o}s and A. H. Stone,
\newblock On the structure of linear graphs. 
\newblock \emph{Bull. Amer. Math. Soc.} \textbf{52}(12) (1946), 1087--1091.

\bibitem{KKR} K. Kawarabayashi, Y. Kobayashi and B. Reed,
\newblock The disjoint paths problem in quadratic time. 
\newblock \emph{J. Combin. Theory Ser. B} \textbf{102}(2) (2012), 424--435.

\bibitem{KLShS17}J.~Kim, H.~Liu, M.~Sharifzadeh and K.~Staden,
\newblock Proof of Koml\'os's conjecture on Hamiltonian subsets.
\newblock \emph{Proc. Lond. Math. Soc. } \textbf{115}(5) (2017), 974--1013.

\bibitem{KSSblowup} J.~Koml\'{o}s, G.N.~S\'{a}rk\"ozy and E.~Szemer\'{e}di, 
\newblock Blow-up lemma.
\newblock \emph{Combinatorica} \textbf{17} (1997), 109--123.

\bibitem{ksssurvey}J.~Koml\'os, A.~Shokoufandeh, M.~Simonovits and E.~Szemer\'edi,
\newblock The regularity lemma and its applications in graph theory.
\newblock \emph{Theoretical aspects of computer science} (Tehran, 2000), 84--112, Lecture Notes in Comput. Sci., 2292, Springer, Berlin, 2002. 

\bibitem{kssurvey}J.~Koml\'os and M.~Simonovits,
\newblock Szemer\'edi's regularity lemma and its applications in graph theory.
\newblock \emph{Combinatorics: Paul Erd\H{o}s is eighty, Vol. 2} (Keszthely, 1993), 295--352, Bolyai Soc. Math. Stud., 2, J\'anos Bolyai Math. Soc., Budapest, 1996

\bibitem{K-Sz94}J.~Koml{\'o}s and E.~Szemer{\'e}di,
\newblock Topological cliques in graphs.
\newblock \emph{Combin. Probab. Comput.} \textbf{3} (1994), 247--256.

\bibitem{K-Sz96}J.~Koml{\'o}s and E.~Szemer{\'e}di,
\newblock Topological cliques in graphs {II}.
\newblock \emph{Combin. Probab. Comput.} \textbf{5} (1996), 79--90.
	
\bibitem{Kos84}A. V. Kostochka,
\newblock Lower bound of the Hadwiger number of graphs by their average degree. 
\newblock \textit{Combinatorica} \textbf{4}(4) (1984), 307--316.
	
\bibitem{LM17}H.~Liu and R.~Montgomery, 
\newblock A proof of Mader's conjecture on large clique subdivisions in $C_4$-free graphs. 
\newblock \emph{J. Lond. Math. Soc.} \textbf{95}(1) (2017), 203--222.

\bibitem{Mader1}W. Mader, 
\newblock Homomorphies\"atze f\"ur Graphen. (in German) 
\newblock \textit{Math. Ann.} \textbf{178} (1968), 154--168.
	
\bibitem{Mader2}W. Mader, 
\newblock Existenz $n$-fach zusammenh\"angender Teilgraphen in Graphen gen\"ugend grosser Kantendichte. (in German)
\newblock \textit{Abh. Math. Sem. Univ. Hamburg} \textbf{37} (1972), 86--97.

\bibitem{MRS16}S. Messuti, V. R\"{o}dl and M. Schacht,
\newblock Packing minor-closed families of graphs into complete graphs. 
\newblock \textit{J. Combin. Theory Ser. B} \textbf{119} (2016), 245--265. 

\bibitem{MTTV}G. L. Miller, S.-H. Teng, W. Thurston and S. A. Vavasis,
\newblock Separators for sphere-packings and nearest neighbor graphs. 
\newblock \emph{J. ACM} \textbf{44}(1) (1997), 1--29.

\bibitem{MT05}J. S. Myers and A. Thomason, 
\newblock The extremal function for noncomplete minors. 
\newblock \emph{Combinatorica} \textbf{25}(6) (2005), 725--753.

\bibitem{NOdM08}J. Ne\v{s}et\v{r}il and P. Ossona de Mendez,
\newblock Grad and classes with bounded expansion. I. Decompositions.
\newblock \emph{European J. Combin.} \textbf{29}(3) (2008), 760--776.

\bibitem{NOdMW}J. Ne\v{s}et\v{r}il, P. Ossona de Mendez and D. Wood,
\newblock Characterisations and examples of graph classes with bounded expansion.
\newblock \emph{European J. Combin.} \textbf{33}(3) (2012), 350--373.

\bibitem{NPS19}S.~Norin, L.~Postle, Z-X. Song, 
\newblock Breaking the degeneracy barrier for coloring graphs with no $K_t$ minor.
\newblock 2019 preprint, arXiv:1910.09378.

\bibitem{Pos20}L.~Postle, 
\newblock An even better Density Increment Theorem and its application to Hadwiger’s Conjecture.
\newblock 2020 preprint, arXiv:2006.14945.
	
\bibitem{RW16}B.~Reed, D.~Wood,
\newblock Forcing a sparse minor.
\newblock \emph{Combin. Probab. Comput.} \textbf{25} (2016), 300--322.
   
\bibitem{RS86}N. Robertson and P. D. Seymour,
\newblock Graph minors. V. Excluding a planar graph. 
\newblock \emph{J. Combin. Theory Ser. B} \textbf{41}(1) (1986), 92--114. 

\bibitem{RS95}N. Robertson and P. D. Seymour,
\newblock Graph minors. XIII. The disjoint paths problem. 
\newblock \emph{J. Combin. Theory Ser. B} \textbf{63}(1) (1995), 65--110.

\bibitem{RS04}N. Robertson and P. D. Seymour,
\newblock Graph Minors. XX. Wagner's conjecture. 
\newblock \emph{J. Combin. Theory Ser. B} \textbf{92}(2) (2004), 325--357.

\bibitem{RST93}N. Robertson, P. D. Seymour and R. Thomas, 
\newblock Hadwiger's conjecture for $K_6$-free graphs. 
\newblock \emph{Combinatorica} \textbf{13}(3) (1993), 279--361.

\bibitem{RST94}N. Robertson, P. D. Seymour and R. Thomas,
\newblock Quickly excluding a planar graph. 
\newblock \emph{J. Combin. Theory Ser. B} \textbf{62}(2) (1994), 323--348.

\bibitem{RST95}N. Robertson, P. D. Seymour and R. Thomas,
\newblock Sachs' linkless embedding conjecture. 
\newblock \emph{J. Combin. Theory Ser. B} \textbf{64}(2) (1995), 185--227.

\bibitem{Thom84}A. Thomason,
\newblock An extremal function for contractions of graphs.
\newblock\textit{Math. Proc. Cambridge Phil. Soc.} \textbf{95} (1984), 261--265.
    
\bibitem{Thom01}A. Thomason,
\newblock The extremal function for complete minors. 
\newblock\textit{J. Combinatorial Theory Ser. B} \textbf{81} (2001), 318--338.

\bibitem{VZ}H.~Voss and C.~Zuluaga, 
\newblock Maximale gerade und ungerade Kreise in Graphen. I. (in German)
\newblock \emph{Wiss. Z. Techn. Hochsch. Ilmenau} \textbf{23} (1977), 57--70.

\bibitem{Yu05}Y. Yu,
\newblock More forbidden minors for Wye-Delta-Wye reducibility. 
\newblock \emph{Electron. J. Combin.} \textbf{13}(1) (2006), paper no. 7, 15 pp.
\end{thebibliography}
\end{document}